\newtheorem{theorem}{Theorem}[section]
\newtheorem{lemma}[theorem]{Lemma}
\newtheorem{corollary}[theorem]{Corollary}
\newtheorem{proposition}[theorem]{Proposition}
\newtheorem{assum}[theorem]{Assumption}
\theoremstyle{definition}
\newtheorem{definition}[theorem]{Definition}
\newtheorem{example}[theorem]{Example}
\theoremstyle{remark}
\newtheorem{remark}[theorem]{Remark}
\theoremstyle{notation}
\newtheorem{notation}[theorem]{Notation}
\numberwithin{equation}{section}
\definecolor{ddcyan}{rgb}{0,0.1,0.9}
\definecolor{ddmagenta}{rgb}{0.8,0,0.8}
\definecolor{orange}{rgb}{0.6,0.2,0}
\def\Rr{\mathbb{R}}     % Real numbers
\def\Nn{\mathbb{N}}     % Natural numbers
\def\Mm{\mathcal{M}}    % Measures
\def\Fc{\mathcal{F}}
\def\Ic{\mathcal{I}}
\def\BV{BV}
\def\Xs{X^*}
\def\Var{\text{\normalfont{Var}}}   % Variation
\def\graph{\text{\normalfont{graph}}}
\def\scal#1{\left\langle #1 \right\rangle}
\def\scalx#1{\left\langle  #1  \right\rangle}%shorter notation for the duality pairing
\def\Ec{\mathcal{E}}            % Energy
\def\Ic{\mathcal{I}}
\def\veps{\varepsilon}
\def\dd{\,{\rm d}}
\def\Ll{\mathcal{L}}    % Lebesgue measure
\def\dom{\text{\normalfont{dom}}}
\def\supp{\text{\normalfont{supp}}}
\def\wconv{\rightharpoonup}
\newcommand{\weakto}{\rightharpoonup}
\newcommand{\weaksto}{\wstar}
\def\wstar{\stackrel{*}{\wconv}}
\newcommand{\Mosco}{\stackrel{\text{\rm \tiny M}}{\longrightarrow}}
\newcommand{\Graph}{\stackrel{\text{\rm \tiny g}}{\longrightarrow}}
\def\Gammaliminf{\mathop{\Gamma\text{--}\mathrm{liminf}}}
\def\Gammalimsup{\mathop{\Gamma\text{--}\mathrm{limsup}}}
\newcommand{\fitz}{\varphi}
\newcommand{\prodpi}[2]{\pi(#2,#1)}
\newcommand{\foraa}{\text{for a.a.}}
\newcommand{\R}{\mathbb{R}}
\newcommand{\N}{\mathbb{N}}
\newcommand{\epsi}{\varepsilon}
\def\trait #1 #2 #3 {\vrule width #1pt height #2pt depth #3pt}
\def\fin{
    \trait .3 5 0
    \trait 5 .3 0
    \kern-5pt
    \trait 5 5 -4.7
    \trait 0.3 5 0
\medskip}
\newcommand{\ene}[2]{\mathcal{E}_{#1}(#2)}
\newcommand{\cg}{\mathcal{G}}
\newcommand{\frsubname}{\partial \cE}
\newcommand{\frsub}[2]{\frsubname_{#1}(#2)}
\newcommand{\diff}[2]{\frsubname_{#1}(#2)}
\newcommand{\cE}{\mathcal{E}}
\newcommand{\la}{\langle}
\newcommand{\ra}{\rangle}
\newcommand{\eps}{\varepsilon}
\newcommand{\sing}{\mathrm{sin}}
\newcommand{\ac}{\mathrm{ac}}
\newcommand{\uai}{\mathcal{Y}}
\newcommand{\Rzd}{\R^{3\times 3}_{\rm dev}}
\def\ricky#1{{\color{ddmagenta}R: #1}}
\newenvironment{rickynew}{\color{red}}{\color{black}}
\newcommand{\berin}{\begin{rickynew}}
\newcommand{\erin}{\end{rickynew}}
\title[Stability results for doubly-nonlinear equations]{Stability results for doubly nonlinear differential
inclusions by variational convergence}
\author{Thomas Roche}
\address{Thomas Roche\\ Department of Mathematics / M6 \\ Technische Universit\"at M\"unchen \\ Boltzmannstr. 3 \\ 85748 Garching b. M\"unchen \\ Germany}
\email{roche@ma.tum.de}
\author{Riccarda Rossi}
\address{Riccarda Rossi\\ Dipartimento di Matematica\\ Universit\`{a} di Brescia\\ Via Valotti 9\\ I-25133 Brescia\\ Italy}
\email{riccarda.rossi@ing.unibs.it}
\author{Ulisse Stefanelli}
\address{Ulisse Stefanelli\\ Istituto di Matematica Applicata e Tecnologie Informatiche {\it E. Magenes} - CNR\\ Via Ferrata 1\\ I-27100 Pavia\\ Italy}
\email{ulisse.stefanelli@imati.cnr.it}
\date{}
\thanks{T.R. would like to thank IMATI Pavia and the University of Brescia, where part of this work was conducted, for the kind hospitality. T.R. was supported by the Bavarian Network of Excellence through its graduate program 'TopMath' and the TUM Graduate School through its Thematic Graduate Center 'TopMath'}
\begin{document}

 \begin{abstract}
We present a stability result for  a wide class  doubly nonlinear equations,
 fea\-tu\-ring  general maximal monotone operators, and (possibly) nonconvex and nonsmooth energy functionals.  The limit analysis resides on the reformulation of the differential evolution as a scalar energy-conservation equation with the aid of the so-called Fitzpatrick theory
 for the  re\-pre\-sen\-ta\-tion   of monotone operators. %We show that, one of the applications of our result is to the vanishing viscosity
 In particular, our result applies to the vanishing viscosity
 approximation of
 rate-independent systems.
 \end{abstract}

\subjclass{35A15, 35K50, 35K85 49Q20, 58E99}

  \keywords{doubly nonlinear differential inclusions,
  maximal monotone operators, stability results,
  graph convergence, self-dual functional,
  Fitzpatrick functionals}

 \maketitle

\section{\bf Introduction}
This note is concerned with a convergence result for  doubly nonlinear differential inclusions of the type
\begin{equation}
 \alpha_n\left(\dot u_n(t) \right) + \partial \Ec_t(u_n(t)) \ni 0 \quad \text{in } X^* \qquad \foraa\, t \in (0,T).\label{eq:DNE-approx}
\end{equation}
Here,  $(\alpha_n)$
is a sequence of maximal monotone  (and possibly multivalued) operators $\alpha_n:X\rightrightarrows\Xs$, $(X,\| \cdot \|)$ is a (separable) reflexive Banach space, and  $\cE: [0,T] \times X \to (-\infty,\infty] $ is a  (proper)
time-dependent {\it energy} functional. We will prove that for $\alpha_n\rightarrow \alpha$ in the graph sense any limit point $u$ of the sequence $(u_n)_{n\in\Nn}$ is a solution to
\begin{equation*}
 \alpha\left(\dot u(t) \right) + \partial \Ec_t(u(t)) \ni 0 \quad \text{in } X^* \qquad \foraa\, t \in (0,T)
\end{equation*}

Throughout the paper, we write $\ene
tu$ in place of $\cE(t,u)$. We will understand
 the multivalued operator
$\frsubname : (0,T) \times X \rightrightarrows X^*$  to be  a
suitable notion of subdifferential for the possibly nonsmooth  and
nonconvex map $u \mapsto \ene tu$, namely the so-called the \emph{Fr\'echet subdifferential},
defined at $(t,u)\in \mathrm{dom}(\cE)$ by
\begin{equation}
\label{frsub-def} \xi \in \frsub tu \quad \text{ if and only if }
\ene tv \geq \ene tu +\scalx{\xi,v-u}+ {\rm o}(\|v-u\|) \quad \text{as $v
\to u$.}
\end{equation}
 Observe that, as soon as the mapping $u \mapsto \ene tu$ is \emph{convex},
the Fr\'echet subdifferential $\diff tu$ coincides with the subdifferential
of $u \mapsto \ene tu$ in the sense of convex analysis.

Doubly nonlinear equations as in \eqref{eq:DNE-approx} arise in a variety of different applications, ranging from Thermomechanics, to phase change, to magnetism. As such, they have attracted a substantial deal of attention in recent years. Correspondingly, the related literature is quite rich. Being beyond our scope to attempt here a comprehensive review, we limit ourselves  to
 recording the seminal observations   by  {\sc Moreau} \cite{Moreau70,Moreau71} and  {\sc Germain} \cite{Germain73},  as well as the early existence results by {\sc Arai} \cite{Arai79}, {\sc Senba} \cite{Senba86}, {\sc Colli \& Visintin} \cite{Colli-Vis90}, and {\sc Colli} \cite{Colli92ter}. The reader can find a selection of more recent results
 in
  \cite{Akagi-Otani04,Akagi08,Akagi11,Akagi11c,Emmrich11,KA12,MRS11x,RMS08,sss}.
 Without going into details, let us mention that, over the last decade, the convexity requirement on the map  $u \mapsto \ene tu$
 in  the pioneering papers \cite{Arai79,Senba86,Colli-Vis90,Colli92ter} has been progressively weakened: in particular,
 in   \cite{MRS11x} a quite broad class of nonsmooth and nonconvex energy functionals has been considered.
 Nonetheless,  in all of the aforementioned contributions the operator $\alpha$ is assumed to fulfill some coercivity property, namely  to have
 at least linear  growth at infinity.  We will refer to this case as {\it viscous}.

The case of $0$-homogeneous operators $\alpha$ has been recently investigated as well, for it connects with the modeling of so-called rate-independent systems. We shall hence refer to this situation as {\it rate-independent}. Some references in this direction are to be found in the papers \cite{DalMaso05,DalMaso04,Francfort-Mielke06,Mainik-Mielke05,Mielke05,Mielke04,Mielke-Roubicek05,Mielke-Theil04,Mielke-et-al02}.

Additionally, relation \eqref{eq:DNE-approx} has been considered in connection with the study of the long-time behavior of solutions in \cite{Akagi11e,Efendief-Zelik09,ss08,segatti2006,sss}, and their variational characterization in \cite{Akagi10,Akagi11d,be,be2}.

The focus of this paper is on the study of the stability of the
doubly nonlinear flows \eqref{eq:DNE-approx}. Namely, we investigate
the convergence of solutions $u_n$ to  equations
\eqref{eq:DNE-approx}, under the assumption  %that
\begin{equation}
\label{graph-convergence}
 \alpha_n\rightarrow \alpha \text{ in the graph sense   in } X \times \Xs,
 \end{equation}
 viz.\ for all $(\xi,\xi^*) \in \mathrm{graph}(\alpha)$ there
 exists $(\xi_n,\xi_n^*) \in \mathrm{graph}(\alpha_n)$ such that $\xi_n \to \xi$
  in $X$
 and $\xi_n^*\to \xi^*$ in $X^*$   as
 $n \to \infty$.
The main result of this paper, Theorem \ref{thm:main-ref},
 states that  cluster points $u$ of the curves $(u_n)$ are in fact solutions to the limiting equation
 \begin{equation}
 \alpha\left(\dot u(t) \right) + \partial \Ec_t(u(t)) \ni 0 \quad \text{in } X^* \qquad \foraa\, t \in (0,T).\label{eq:DNE-limiting-intro}
\end{equation}
 %$\alpha\left(\dot u(t) \right) + \partial \Ec_t(u(t)) \ni 0 $ in $X^*$.

We have to mention that stability results for the doubly nonlinear flows \eqref{eq:DNE-approx} are already available in the literature. For {\it viscous} graphs $\alpha_n$,
a first convergence    theorem  in the case of  convex energies  has been obtained by {\sc Aizicovici \& Yan} \cite{Aizicovici-Yan} (see also \cite{be}), whereas stability results for doubly nonlinear equations with nonconvex energies
have been proved in  \cite{MRS11x} .
 This issue has been  recently reconsidered by {\sc Visintin} \cite{Visintin11,Visintin13,Visintin-press}, who
has
remarkably extended the reach of the theory to treat
subdifferential inclusions of the type
%\begin{equation}
%\label{visintin-dne-intro}
\[
\beta(\dot u(t) ) + \gamma(u(t)) \ni 0 \quad \text{in } X^* \qquad \foraa\, t \in (0,T),
\]
with $\beta, \, \gamma : X \rightrightarrows X^*$ maximal monotone operators,
$\beta$ cyclically monotone and
$\gamma$  noncyclic monotone,
 by
 resorting to
 the so-called {\sc Fitzpatrick} theory  \cite{Fitzpatrick88}.

 Let us briefly
  recall that an operator  $\alpha : X \rightrightarrows X^* $ is {\it cyclically monotone} if $\alpha$ is the {\it generalized gradient} of some potential. Namely, if $\alpha = \partial \psi$
 for some proper, convex, and lower semicontinuous function $\psi: X \to (-\infty,\infty]$,
  where the symbol $\partial$ here denotes the subdifferential in the sense of convex analysis. In the cyclic-monotone case
   $\alpha = \partial \psi$, it is well known that the relation $y \in \partial \psi (x)$ can be equivalently reformulated as $\scalx{ y,x} = \psi(x)+\psi^*(y)$, where $\psi^*$ is the Legendre-Fenchel conjugate of $\psi$ and $\scal{\cdot,\cdot}$ is the duality pairing between $X^*$ and $X$.
   The use of this variational  fact   for the aim of   variationally reformulating   evolution equations dates back to {\sc Brezis-Ekeland} \cite{Brezis-Ekeland76,Brezis-Ekeland76b} and {\sc Nayroles} \cite{Nayroles76,Nayroles76b}. Among the many contributions stemming
    from  this idea,  the reader is especially referred to the existence proofs by {\sc Auchmuty} \cite{Auchmuty93} and {\sc Roubicek} \cite{Roubicek00}, and to the recent monograph by {\sc Ghoussoub} \cite{Ghoussoub08} on self-dual variational principles (see also the references in \cite{be}).

The Fitzpatrick theory allows us to extend this variational view
to subdifferential inclusions of the type \eqref{eq:DNE-limiting-intro}, with
$\alpha$ possibly \emph{noncyclic} monotone,
by introducing  {\it representative} functions   $f_\alpha: X \times X^* \to (-\infty,\infty]$   for the  %noncyclic monotone
operator $\alpha$. These are convex functions $f_\alpha$ with the property %that
\begin{equation}
\label{fitzpatrick-intro}
\begin{aligned}
  \forall (x,y) \in X \times  X^*, \ \scalx{ y,x} &\leq f_\alpha(x,y) \ \text{and} \\
y \in \alpha(x) \ \text{iff} \ \scalx{ y,x}&=f_\alpha(x,y).
\end{aligned}
\end{equation}
The reader is referred to Section \ref{s:3} below for a selection of relevant results within this theory.  In particular, in \cite{Visintin11}   these tools
are used in order to reformulate variationally relations \eqref{eq:DNE-approx}  for noncyclic monotone operators. This reformulation opens the way to devise a suitable $\Gamma$-convergence analysis toward structural stability of the flows.

 As for the {\it rate-independent} case, one shall mention the stability results for hysteresis operators from the classical monographs \cite{Brokate-Spre96,Krejci98,Visintin94} (see also \cite{lombardo}). Another stability result in the rate-independent setting is in \cite{be}.
  Moreover, we record {\sc Visintin} \cite{Visintin13,Visintin-press},  which exploits the Fitzpatrick idea in the rate-independent context, but by taking perturbations in $\partial \cE_n$ (again, in a possibly noncyclic monotone frame). %Finally, let us refer to the recent \cite{MRS11x}  where some general convergence result is reported in the cyclic (albeit state-dependent) case.

 Finally, the approximation of rate-independent flows by viscous flows (in the cyclic-monotone case)
 has been recently attracted a great deal of attention.  This is especially critical as viscous and rate-independent evolutions usually call for different analytical treatments. The  \emph{vanishing viscosity} approach to
 abstract rate-independent systems has been in particular developed in
  \cite{Efendiev-Mielke06,Mielke-et-al08,MRS12}. More specifically, in the latter two papers
 it has been shown that the vanishing viscosity limit leads to
   the notion of  $BV$   solution to a rate-independent
   system. In  the recent  \cite{MieRosSav2012}, still within the cyclic-monotone frame, the  $p_n\to 1$ limit, where $p_n$ is the homogeneity of the potential $\psi_n$ of $\alpha_n$, has been addressed,
    and it has been proved that   $BV$   solutions arise in the limit. A stability result  with respect to variational convergence for the latter solution concept has also been obtained.
 %%%%
\paragraph{\bf Our result}
The focus here is that of obtaining a stability result for  the differential inclusions  \eqref{eq:DNE-approx}  by allowing for maximal generality on the perturbations $\alpha_n$ and on the functional $\cE$. In particular, we shall neither assume super-linear equi-coercivity in $\alpha_n$, nor cyclic monotonicity. As for the energy $\cE$, we do not require neither smoothness nor convexity with respect to $u$, but still ask for lower semicontinuity and some coercivity, see Assumption \ref{ener} below.

This generality sets our result aside from the available contributions on this topic.  In particular,
our analysis also encompasses the passage from viscous to rate-independent doubly nonlinear evolution. Indeed,
we are  able to treat here the  $p_n\to 1$ case
 for noncyclic monotone operators $\alpha_n$ (in this setting, $p_n $ is the coercivity exponent for $\alpha_n$). In  our general context,
 we prove that the so-called \emph{local solutions} \cite{Mielke-et-al08,MRS12} to a rate-independent system arise in the $p_n\to 1$ limit.
    %where $p_n$ is the homogeneity of the potential $\psi_n$ of $\alpha_n$. By taking $p_n \to 1$ we are hence passing .

 The basic idea for handling the noncyclic monotone case, is  to resort to a variational reformulation of the flows \eqref{eq:DNE-approx}
  which is   well suited  for discussing limits. By letting $f_{\alpha_n}$ {\it represent} the monotone operator $\alpha_n$
in the sense of \eqref{fitzpatrick-intro}
 and assuming the validity of a suitable chain rule for the energy $\cE$, relation \eqref{eq:DNE-approx}  is proved to be equivalent (see Proposition \ref{prop:sol}) to
  an  {\it energy conservation} identity,  namely
\begin{equation}
\label{enid-intro}
 \underbrace{\cE_t(u_n(t))}_{\text{\rm \scriptsize  energy at $t$}}  + \underbrace{\int_0^t f_{\alpha_n}\Big(\dot u_n(s), - \partial \cE_s(u(s))\Big)d t}_{\text{\rm \scriptsize dissipated energy on $[0,t]$}} = \underbrace{\cE_0(u(0)}_{\text{\rm initial energy}} + \underbrace{\int_0^t \partial_t \cE_s(u(s)) d s}_{\text{\rm \scriptsize work of ext. actions}}
 \ \text{ for all $t\in [0,T]$.}
\end{equation}
The strategy  is then to prove that, by passing to the $\liminf$ in \eqref{enid-intro}, the structure of the relation is preserved. In particular, we provide sufficient conditions under which the  $\liminf$ of the integral of the representative functions $f_{\alpha_n}$ is a representative function of the limit graph $\alpha$.  Care here is given to developing such a lower semicontinuity argument
for functions which are only
$BV$ in time. This allows us   to directly  include the case of rate-independent flows.

Let us once more  emphasize   that we can encompass in
our analysis a broad class of time-dependent energies $\cE: [0,T]
\times X \to (-\infty,+\infty]$,  (possibly) nonsmooth and nonconvex
with respect to the state variable $u$, but still satisfying a
suitable set of coercivity and regularity type conditions mutuated
from \cite{MRS11x}. Note that in such a general setting existence of
absolutely continuous solutions
 $u_n : [0,T] \to X$  to \eqref{eq:DNE-approx} is presently not known.
A possibility in order to overcome this would be to strengthen our
assumptions on the  energy functional $\cE$, for instance assuming
it to be a  suitable perturbation of a ($\lambda$-)convex
functional, as in  \cite{RS06,RMS08,MRS11x}.
 We however refrain from
this, for the sake of keeping maximal generality for the convergence
result. In this respect, our result should be regarded purely as a
stability  analysis, with  focus on the convergence properties of
the  operators $(\alpha_n)$. A stability result with respect to
suitable convergence of the energy functionals  could also be
obtained, closely following the lines of  \cite[Thm.\ 4.8]{MRS11x}.
Again, we have chosen not to detail this, in order to highlight the
usage of the Fitzpatrick theory to deal with the \emph{noncyclic}
operators $(\alpha_n)$.  This very  generality  will allow us,
for instance, to address in the upcoming Section \ref{ss:4.3-ulisse}
the quasistatic limits of a  class of \emph{dynamical} problems,
which can be in fact reformulated as doubly nonlinear equations of
the form \eqref{eq:DNE-approx}.

\noindent
\textbf{\bf Structure of the paper.} Section \ref{s:3} contains
some background material on Fitzpatrick theory, and on the notions
of variational convergence for functionals and operators which will
be relevant for the subsequent analysis.
 In Section \ref{s:4.1}, some further preliminaries of measure
 theory and convex analysis are provided, whereas in Section
 \ref{s:4.2} the  basic assumptions
on the energy functional $\cE$ are stated in detail, and suitable
reformulations of \eqref{eq:DNE-approx} are discussed. In  Section
\ref{s:4.3-new}  we state our main stability result Theorem
\ref{thm:main-ref} and thoroughly comment it. We also give two
corollaries  (i.e.  Theorem\ \ref{prop:thomas} and Proposition
\ref{lemma:rate-independent}) in two particular cases: specifically,
Prop.\  \ref{lemma:rate-independent} deals with the $p_n \to 1$
vanishing-viscosity limit. We conclude Sec.\ \ref{s:4.3-new} by
discussing classes of energy functionals to which our results apply
 (cf.\ Sec.\ \ref{ss:4.3-nr}), and developing applications to
rate-independent limits of Hamiltonian systems (in Sec.\
\ref{ss:4.3-ulisse}).
% Examples and applications are developed in Section
%\ref{s:examples}, while
The proof of Theorem \ref{thm:main-ref} is
developed throughout Section \ref{s:proof-thm-1},
 also exploiting some  results from Young measure theory
which are contained in
  Appendix \ref{s:appendixB}.

 \section{\bf Fitzpatrick theory}
\label{s:3}
Within this section, % we restrict to the case in which
%\[
%\text{
% $X$ is a \emph{reflexive} Banach space}.
% \]
%Furthermore,
we shall systematically use the notation
\[
\pi(\xi,\xi^*):= \scalx{ \xi^*, \xi} \quad \text{for all } (\xi,\xi^*) \in X \times X^*
\]
for the duality pairing between the reflexive space $X$ and $X^*$,
and identify possibly multivalued operators $\alpha: X \rightrightarrows X^*$ with the corresponding graphs $\alpha \subset X \times X^* $ without changing notation. We recall that
 $\alpha: X \rightrightarrows X^*$ is \emph{monotone} if
$$ \scalx{\xi^* - \xi_0^*, \xi - \xi_0 } \geq 0 \quad \text{for all } \xi^*\in\alpha(\xi), \  \xi_0^*\in\alpha(\xi_0)$$
(where $\scalx{\cdot, \cdot}$ denotes the duality pairing between
$X^*$ and $X$), and that  it is \emph{maximal monotone,} if it is maximal for set inclusion within the
class of monotone operators.

We shall  provide here a minimal aside on the Fitzpatrick theory,  essentially mutuated from \cite{Visintin08}. The reader is however referred to \cite{Buliga08,Buliga10,Burachik-Svaiter02,Burachik-Svaiter03,Ghoussoub06,Ghoussoub08,Martinez-Legaz-Svaiter05,Martinez-Legaz-Svaiter08,Penot04,Penot04b} for additional material and a collection of related results and applications to PDEs.
%%%%%
\paragraph{\bf Representative functionals.} We denote by $\Fc(X)$ the set of functionals $\fitz: X \times X^* \to   (-\infty,\infty]$ such that
$$\fitz \ \ \text{is convex, lower semicontinuous, and} \ \ \fitz(\xi,\xi^*) \geq \pi( \xi, \xi^*)\quad \forall (\xi,\xi^*) \in X \times X^*.$$
We associate with  $\fitz\in {\Fc}(X)$ the set $\alpha\subset X \times X^*$ given by
 \begin{equation}
 \label{alpha-associated-with}
 (\xi,\xi^*)\in \alpha \quad \Leftrightarrow \quad \fitz(\xi,\xi^*) = \pi(\xi^*,\xi).
 \end{equation}
Whenever \eqref{alpha-associated-with} holds we say that $\fitz$ {\it represents} $\alpha$, that $\fitz$ is {\it representative}, and that $\alpha$ is {\it representable}. A representable operator can be represented by
different representative functionals (cf.\ Example \ref{multiplicity-rmk} below). On the contrary, each representative functional represents only one operator.
\begin{example}
\label{ex:bipotential}
\upshape
In the cyclically monotone case of $\alpha = \partial \psi$ for some
(proper) convex and lower semicontinuous potential $\psi: X \to (-\infty,\infty]$,
  a representative functional for $\alpha$ is given
by the  \emph{bipotential}  (according to the terminology of \cite{Buliga08})
\begin{equation}
\label{example-bipotential}
 \fitz(\xi,\xi^*)  = \psi(\xi)+ \psi^*(\xi^*).
\end{equation}
\end{example}

We have the following {\it strict} set inclusions
\[
%\begin{equation}\label{incl}
\{\text{maximal monotone operators}\} \subsetneqq \{\text{representable operators}\} \subsetneqq \{\text{monotone operators}\}.
%\end{equation}
\]
Namely, representable operators are intermediate between monotone and maximal monotone. One may wonder how to translate maximality at the level of representative functionals.
The following result provides a useful criterium for the representability of a maximal monotone operator.
%%%%%%%%%%%%%%%%
\begin{proposition}[Representative of a maximal monotone operator \cite{Sveiter03}]\label{repres}
A functional
$\fitz \in \Fc(X)$ represents a maximal monotone operator iff $\fitz^* \in \Fc(X^*)$. In this case, if $\fitz$ represents $\alpha$ then $\fitz^*$ represents~$\alpha^{-1} = \{ (\xi,\xi^*) \, : \, (\xi^*,\xi )\in \alpha\}$.
\end{proposition}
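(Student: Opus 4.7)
The plan is to prove the equivalence (iff) and then identify the operator represented by $\fitz^*$. The convexity and lower semicontinuity of $\fitz^*$ are automatic, since $\fitz^*$ is a supremum of continuous affine functions; using reflexivity of $X$ to identify $(X \times X^*)^*$ with $X^* \times X$, the content of the condition $\fitz^* \in \Fc(X^*)$ reduces to the pointwise bound $\fitz^*(\eta^*,\eta) \geq \pi(\eta,\eta^*)$.

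For the forward direction, assume $\fitz$ represents the maximal monotone operator $\alpha$. Fix $(\eta,\eta^*) \in X\times X^*$ and restrict the defining supremum of $\fitz^*(\eta^*,\eta)$ to pairs $(a,b) \in \alpha$, on which $\fitz(a,b) = \pi(a,b)$. A simple algebraic rearrangement gives
\[
\fitz^*(\eta^*,\eta) - \pi(\eta,\eta^*) \;\geq\; -\langle \eta^*-b,\eta-a\rangle \qquad \text{for every } (a,b) \in \alpha.
\]
If $(\eta,\eta^*) \in \alpha$, the choice $(a,b) = (\eta,\eta^*)$ together with monotonicity of $\alpha$ delivers the bound $\fitz^*(\eta^*,\eta) \geq \pi(\eta,\eta^*)$; if $(\eta,\eta^*) \notin \alpha$, the \emph{maximality} of $\alpha$ supplies a particular $(a,b) \in \alpha$ with $\langle \eta^*-b,\eta-a\rangle < 0$, making the bound strict. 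This proves $\fitz^* \geq \pi$, and in addition yields the set inclusion $\alpha_{\fitz^*} \subseteq \alpha^{-1}$ (where $\alpha_{\fitz^*}$ denotes the operator represented by $\fitz^*$), which will be needed below.

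For the converse, assume $\fitz^* \in \Fc(X^*)$ and set $\alpha_{\fitz} := \{(\xi,\xi^*) : \fitz(\xi,\xi^*) = \pi(\xi,\xi^*)\}$. Monotonicity of $\alpha_{\fitz}$ is a soft consequence of convexity of $\fitz$: Jensen's inequality at the midpoint of two points $(\xi,\xi^*)$, $(\eta,\eta^*)$ of $\alpha_{\fitz}$ combined with $\fitz \geq \pi$ and the bilinearity of $\pi$ gives, upon expansion, $\langle \xi^*-\eta^*, \xi-\eta\rangle \geq 0$. The \emph{main obstacle} is maximality, which genuinely exploits the hypothesis $\fitz^* \geq \pi$. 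Given $(x_0,y_0) \notin \alpha_{\fitz}$, the plan is to produce a refuter $(a,b) \in \alpha_{\fitz}$ with $\langle y_0-b, x_0-a\rangle < 0$ by minimizing a quadratically regularized version of $\fitz(a,b) - \langle y_0,a\rangle - \langle b,x_0\rangle$ so as to ensure a minimizer $(a_\varepsilon, b_\varepsilon)$. The optimality condition supplies a pair $(y_\varepsilon, x_\varepsilon) \in \partial \fitz(a_\varepsilon, b_\varepsilon)$ with $(y_\varepsilon, x_\varepsilon) \to (y_0, x_0)$, and the resulting Fenchel--Young equality at $(a_\varepsilon, b_\varepsilon)$ combined with both inequalities $\fitz \geq \pi$ and $\fitz^* \geq \pi$ forces $\langle y_\varepsilon - b_\varepsilon, x_\varepsilon - a_\varepsilon\rangle \leq 0$; a careful passage to the limit $\varepsilon \downarrow 0$ delivers the desired refuter in $\alpha_{\fitz}$.

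Finally, since $\fitz$ is convex and lsc, $\fitz^{**} = \fitz \in \Fc(X)$, so the converse implication applied to $\fitz^*$ (with the roles of $X$ and $X^*$ swapped) shows that $\alpha_{\fitz^*}$ is itself maximal monotone. Combined with the inclusion $\alpha_{\fitz^*} \subseteq \alpha^{-1}$ obtained in the forward direction and the maximal monotonicity of $\alpha^{-1}$ (inherited from that of $\alpha$), a maximal monotone operator admits no proper monotone extension, so we conclude $\alpha_{\fitz^*} = \alpha^{-1}$, i.e., $\fitz^*$ represents $\alpha^{-1}$.
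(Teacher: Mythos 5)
The paper does not prove this proposition; it is quoted from Svaiter's work, so there is no in-paper argument to measure you against. Judged on its own terms, your architecture is the standard one from the Burachik--Svaiter/Svaiter line of results, and most of it is sound: the forward direction (restricting the conjugate's supremum to $\graph(\alpha)$ and invoking maximality to get strictness off $\alpha^{-1}$), the monotonicity of $\alpha_\fitz$ via midpoint convexity, and the closing step ($\fitz^{**}=\fitz$, hence $\alpha_{\fitz^*}$ maximal monotone and contained in the maximal monotone $\alpha^{-1}$, hence equal to it) are all correct.

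The genuine gap is in the maximality argument of the converse, precisely where you place the ``main obstacle.'' As described, you minimize an $\varepsilon$-regularized version of $G(a,b)=\fitz(a,b)-\langle y_0,a\rangle-\langle b,x_0\rangle$ and then let $\varepsilon\downarrow 0$. This scheme fails twice. First, $G$ itself is only bounded below by $\langle b-y_0,a-x_0\rangle-\langle y_0,x_0\rangle$, which can decrease \emph{quadratically} along suitable rays; a quadratic penalty with vanishing coefficient therefore does not restore coercivity, and the minimizers $(a_\varepsilon,b_\varepsilon)$ need not exist for small $\varepsilon$. Second, even granting their existence, $(a_\varepsilon,b_\varepsilon)$ need not converge, and a limit passage in $\langle y_\varepsilon-b_\varepsilon,x_\varepsilon-a_\varepsilon\rangle\le 0$ can only produce a \emph{non-strict} inequality, which is not a refuter. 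The repair is to drop $\varepsilon$ entirely: minimize $G(a,b)+\tfrac12\|a-x_0\|^2+\tfrac12\|b-y_0\|_*^2$, which is bounded below by $\tfrac12\bigl(\|a-x_0\|-\|b-y_0\|_*\bigr)^2-\langle y_0,x_0\rangle$ and coercive, hence (by reflexivity and weak lower semicontinuity) attains its minimum at some $(\bar a,\bar b)$. The optimality condition gives $(\bar y^*,\bar x^*)=(y_0-u^*,\,x_0-u)\in\partial\fitz(\bar a,\bar b)$ with $u^*$, $u$ in the duality maps of $\bar a-x_0$, $\bar b-y_0$; Fenchel--Young equality together with $\fitz\ge\pi$ and $\fitz^*\ge\pi$ yields $\langle \bar b-\bar y^*,\bar a-\bar x^*\rangle\le 0$, and expanding this squeezes $(\|\bar a-x_0\|-\|\bar b-y_0\|_*)^2\le 0$, forcing \emph{equalities} throughout: $(\bar a,\bar b)\in\alpha_\fitz$ and $\langle\bar b-y_0,\bar a-x_0\rangle=-\|\bar a-x_0\|^2$. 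If $(x_0,y_0)\notin\alpha_\fitz$ then $(\bar a,\bar b)\neq(x_0,y_0)$ and this quantity is strictly negative, which is the refuter — no limit needed. With this replacement your proof is complete.
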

\paragraph{\bf The Fitzpatrick and the Penot functions.}
Given $\alpha \subset X \times X^*$ with $\alpha \not = \emptyset$ we define the {\it Fitzpatrick function}
(associated with $\alpha$)
$f_\alpha:  X \times X^* \to (-\infty,\infty]$ by
\begin{equation}
\label{fitz-formula}
\begin{aligned}
f_\alpha(\xi,\xi^*)&:= \pi( \xi,\xi^*) + \sup\{ \prodpi{ \xi^* - \xi_0^*}{\xi_0 - \xi} \ : \  (\xi_0,\xi_0^*)\in \alpha\}\\
&= \sup\{ \prodpi{\xi^*}{ \xi_0 } - \prodpi{  \xi_0^*}{ \xi_0 - \xi} \ : \  (\xi_0,\xi_0^*)\in \alpha\} \quad \text{for all } (\xi,\xi^*) \in X \times X^*
\end{aligned}
\end{equation}
and the {\it Penot function} (associated with $\alpha$)  $\rho_\alpha:  X \times X^* \to (-\infty,\infty]$ by
\begin{equation}
\label{penot-formula}
\rho_\alpha := (\pi + I_\alpha)^{**}.
\end{equation}
Both $f_\alpha$ and $\rho_\alpha$ represent $\alpha$. Moreover $f_\alpha$ and $\rho_\alpha$ are respectively the minimal and maximal element (with respect to pointwise ordering) of the functional {\it interval}
\begin{equation}
\label{functional_interval}
{\Ic}(\alpha)=\{ \fitz \in {\Fc}(X) \ : \ \fitz \ \text{represents} \ \alpha\}.
\end{equation}
In particular, in the cyclically monotone case of $\alpha = \partial \psi$, there holds
\begin{equation}
\label{bipot-in-betw}
f_\alpha(\xi,\xi^*) \leq  \psi(\xi)+ \psi^*(\xi^*) \leq \rho_\alpha(\xi,\xi^*) \quad \text{for all } (\xi,\xi^*) \in X \times X^*.
\end{equation}

Let us also point out that, in view of the definitions \eqref{fitz-formula} and
\eqref{penot-formula} of $f_\alpha$ and $\rho_\alpha$ the following formulae hold
\begin{align}
&
\label{1dual-fitz-penot}
f_\alpha(\xi,\xi^*)  = \sup\{   \prodpi{\xi^*}{ \xi_0 } + \prodpi{  \xi_0^*}{\xi}- \rho_{\alpha^{-1}}(\xi_0^*,\xi_0) \, : \,  (\xi_0,\xi_0^*)\in X {\times} X^*\} \quad \forall\,(\xi,\xi^*) \in X {\times} X^*,
\\
&
\label{2dual-fitz-penot}
\rho_\alpha(\xi,\xi^*)  = \sup\{   \prodpi{\xi^*}{ \xi_0 } + \prodpi{  \xi_0^*}{\xi}- f_{\alpha^{-1}}(\xi_0^*,\xi_0) \, : \,  (\xi_0,\xi_0^*)\in X {\times} X^*\} \quad \forall\, (\xi,\xi^*) \in X {\times} X^*,
\end{align}
namely $f_\alpha$ ($\rho_\alpha$, resp.) is the convex conjugate of  the Penot (Fitzpatrick, resp.) function
of the inverse operator $\alpha^{-1}$.

Finally, for later use
we observe that
\begin{equation}
\label{to-be-observed-later}
0 \in \alpha (0)  \ \Rightarrow \ f_\alpha(\xi,\xi^*) \geq 0 \quad \text{for all } (\xi,\xi^*) \in  X \times X^*.
\end{equation}
\paragraph{\bf Self-dual representatives.}
Recall that a function $\fitz: X \times X^* \to (-\infty,\infty]$ is called {\it self-dual} iff
$$\fitz(\xi,\xi^*) = \fitz^*(\xi^*,\xi) \quad \text{for all }(\xi,\xi^*)\in  X \times X^* .$$
The interval ${\Ic}(\alpha)$  from  \eqref{functional_interval} includes {\it self-dual } representative functions \cite[Thm.\ 3.3]{Penot04b}.
In the cyclically monotone case of $\alpha = \partial \psi$, an example in this direction is given
by the  \emph{bipotential} \eqref{example-bipotential}.
Out of the cyclically monotone realm, an example of a self-dual representative in the case $\alpha= \gamma + \partial \psi$ with $\gamma$ skew adjoint is $(\xi, \xi^*)\mapsto \psi(\xi) + \psi^*(-\gamma \xi + \xi^*)$ \cite{Ghoussoub06}.

In the general case, the indirect proof of the existence of self-dual representative functions is due to {\sc Penot} \cite{Penot03,Penot04} and {\sc Svaiter} \cite{Sveiter03},
 whereas direct constructions have been firstly provided by {\sc Penot \& Z\u alinescu} \cite{Penot-Zalinescu05} under some restriction on $\alpha$. An explicit self-dual representative function in the general maximal monotone case has been recently obtained by {\sc Bauschke \& Wang} \cite{Bauschke-Wang08} and reads
\begin{equation}
\label{bau-wang}
(\xi,\xi^*) \mapsto \frac12 \inf_{(\zeta,\zeta^*)\in X \times X^*}\big\{f_\alpha(\xi + \zeta,\xi^* + \zeta^*)+ f_\alpha(\xi - \zeta,\xi^* - \zeta^*)+ \| \zeta\|^2 + \|\zeta \|^2_*\big\}.
\end{equation}
%We resort to using the symbol $\sigma_\alpha$ for any self-dual function representing $\alpha$.
Note that neither the Fitzpatrick function $f_\alpha$ nor the Penot function $\rho_\alpha$ are self-dual in general.

Let us now recast the characterization of maximal monotonicity of Proposition \ref{repres} in the following.
\begin{proposition}[Self-dual representatives = maximality]\label{repres2}
 An
 operator $\alpha: X \rightrightarrows X^*$ is maximal monotone iff it is represented by a self-dual functional $\fitz$.
\end{proposition}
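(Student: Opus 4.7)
The plan is to treat the two implications separately and reduce each to results already in the excerpt: the characterization of maximality via Proposition~\ref{repres}, and the explicit construction \eqref{bau-wang} of Bauschke--Wang.

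\emph{The easy direction} $(\Leftarrow)$. Suppose $\alpha$ is represented by a self-dual $\fitz \in \Fc(X)$, so that $\fitz(\xi,\xi^*) = \fitz^*(\xi^*,\xi)$ for every $(\xi,\xi^*) \in X \times X^*$. I would simply check that the hypothesis of Proposition~\ref{repres} is satisfied. Since $\fitz \in \Fc(X)$, one has $\fitz(\xi,\xi^*) \geq \pi(\xi,\xi^*)$ for every pair, and then by self-duality
\[
\fitz^*(\xi^*,\xi) = \fitz(\xi,\xi^*) \geq \pi(\xi,\xi^*) \qquad \text{for all } (\xi,\xi^*) \in X \times X^*.
\]
Convexity and lower semicontinuity of $\fitz^*$ come for free from the definition of the convex conjugate; hence $\fitz^* \in \Fc(X^*)$. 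Proposition~\ref{repres} then yields that $\alpha$ is maximal monotone.

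\emph{The nontrivial direction} $(\Rightarrow)$. Here I would invoke the machinery already summarized in the excerpt. Given a maximal monotone $\alpha$, the Fitzpatrick function $f_\alpha$ defined in \eqref{fitz-formula} belongs to $\Fc(X)$ and represents $\alpha$ (so in particular $\Ic(\alpha) \neq \emptyset$), and by Proposition~\ref{repres} also $f_\alpha^* \in \Fc(X^*)$. The candidate self-dual representative is then the explicit Bauschke--Wang functional \eqref{bau-wang},
\[
\fitz(\xi,\xi^*) := \tfrac12 \inf_{(\zeta,\zeta^*)\in X \times X^*}\Big\{f_\alpha(\xi+\zeta,\xi^*+\zeta^*) + f_\alpha(\xi-\zeta,\xi^*-\zeta^*) + \|\zeta\|^2 + \|\zeta^*\|_*^2\Big\}.
\]
I would then verify three things: (i) $\fitz$ lies in $\Fc(X)$ (convexity from infimal convolution with a convex symmetric object, lower semicontinuity from reflexivity of $X$ and coercivity in $(\zeta,\zeta^*)$, and the bound $\fitz \geq \pi$ from choosing $\zeta = 0$, $\zeta^* = 0$ after completing the square against $\pi$); (ii) $\fitz$ is self-dual, which is the computation done by Bauschke--Wang and relies on the fact that the $\tfrac12(\|\cdot\|^2 + \|\cdot\|_*^2)$ quadratic is itself self-dual on $X \times X^*$, combined with $f_\alpha^* \in \Fc(X^*)$; (iii) $\fitz$ represents $\alpha$, i.e.\ equality $\fitz(\xi,\xi^*) = \pi(\xi,\xi^*)$ holds exactly on $\graph(\alpha)$. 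The inclusion $\graph(\alpha) \subset \{\fitz = \pi\}$ follows by plugging $\zeta=0$, $\zeta^*=0$, while the reverse inclusion uses self-duality together with the implication $\fitz(\xi,\xi^*) = \pi(\xi,\xi^*) \Rightarrow (\xi,\xi^*) \in \alpha$ for any self-dual representative in $\Fc(X)$ (arguing that otherwise maximality of $\alpha$ would be contradicted, as in Svaiter's proof of Proposition~\ref{repres}).

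\emph{Main obstacle.} The routine part is the equivalence statement itself; the substantive content sits in exhibiting the self-dual representative. If \eqref{bau-wang} is accepted from \cite{Bauschke-Wang08}, then the proof is a short bookkeeping exercise around Proposition~\ref{repres}. If one prefers a self-contained argument, the delicate point is the self-duality identity $\fitz = \fitz^*$: it requires computing the conjugate of an infimal convolution in $X \times X^*$ and exploiting that $f_\alpha$ and $f_\alpha^*$ are paired through $\alpha \leftrightarrow \alpha^{-1}$, which is where the maximal monotonicity of $\alpha$ is really used.
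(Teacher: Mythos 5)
Your proof is correct and follows essentially the same route as the paper: the forward implication is delegated to the Bauschke--Wang construction \eqref{bau-wang}, and the converse is exactly the paper's argument, namely that self-duality gives $\fitz^*(\xi^*,\xi)=\fitz(\xi,\xi^*)\geq\pi(\xi,\xi^*)$, so $\fitz^*\in\Fc(X^*)$ and Proposition~\ref{repres} applies. The extra verification steps you sketch for the Bauschke--Wang functional are not carried out in the paper either (it simply cites \cite{Bauschke-Wang08}), so there is no substantive difference.
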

\begin{proof}
 By \cite{Bauschke-Wang08},  if $\alpha$ is maximal monotone, then it admits the
self-dual representative \eqref{bau-wang}.

As for the converse implication, note that by self-duality of $\fitz$ and $\pi$
 we get
$$ \fitz^*(\xi^*,\xi) = \fitz(\xi,\xi^*)\geq \pi(\xi,\xi^*) = \pi^*(\xi^*,\xi).$$
Thus, $\fitz^* \in {\Fc}(X^*)$ and Proposition \ref{repres} applies.
\end{proof}
%%%%%
\paragraph{\bf Self-dual representatives vs.\ Fitzpatrick  and Penot functions.}
Let $\alpha \subset X \times X^*$ be a cyclically monotone operator
with $\alpha = \partial \psi$  for some  convex and lower
semicontinuous potential  $ \psi:X\rightarrow (-\infty,\infty].$
 As already observed, a self-dual representative of $\alpha$ is the sum of $\psi$ and its convex conjugate.  However in general the Fitzpatrick
   functional
   $f_{\alpha}$ may differ from $\psi+\psi^*$, as shown by the following.
   %%%%%
 \begin{example}
\label{multiplicity-rmk}
\upshape Consider $X = \Rr = X^*$ and set $\alpha=$ identity, namely $\alpha = \partial \psi$ with % to be the subdifferential of
$\psi(\xi) = \frac{1}{2}\xi^2$.
%It is easy to see that $\psi^*(\xi^*) = \frac{1}{2}\left(\xi^*\right)^2$. However the
The Fitzpatrick function of $\partial \psi$ is $f_{\partial \psi}(\xi,\xi^*) = \xi^2 /4+ \left(\xi^*\right)^2/4 +  \xi\cdot \xi^*/2$, which is not self-dual.
\end{example}
%Instead, in the cyclically monotone case  the Penot function coincides with the bipotential $\psi+\psi^*$.
 %\begin{lemma}
%\label{lemma:penot-ciclic}
%Assume $\alpha = \partial \psi$ for some convex and lower semicontinuous $ \psi:X\rightarrow (-\infty,\infty].$
%Then,
%\begin{equation}
%\label{bella-penot}
% \rho_{\partial\psi} (\xi,\xi^*) = \psi(\xi)+ \psi^* (\xi^*) \quad \text{for all } (\xi,\xi^*) \in X \times X^*\,.
%\end{equation}
%\end{lemma}
%\begin{proof}
%\end{proof}
%%%%%%%%%%
\paragraph{\bf Fitzpatrick and Penot functions in the case of $1$-positively  homogeneous potentials.}
Our next result reveals that, when $\alpha=\partial \psi$ and $\psi: X \to (-\infty,+\infty]$ is
positively homogeneous of degree $1$, then
 also the Fitzpatrick
functional $f_\alpha$
coincides with the bipotential \eqref{example-bipotential}.
\begin{proposition}
\label{bella-prop}
 Let $\psi:X\rightarrow (-\infty,\infty]$ be convex, lower semicontinuous, and  positively
 homogeneous of degree $1$, viz.\ $\psi(\lambda \xi)= \lambda \psi(\xi)$ for all $\xi \in X$
 and $\lambda \geq 0$.

Then, the Fitzpatrick function of the subdifferential of $\psi$ coincides with the sum of $\psi$ and its convex conjugate, i.e.
\begin{equation}
\label{to-be-labeleed}
  f_{\partial\psi}(\xi,\xi^*) = \psi(\xi) + \psi^* (\xi^*)  \qquad \text{for all }(\xi,\xi^*)\in X \times X^*.
\end{equation}
\end{proposition}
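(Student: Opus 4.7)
The upper bound $f_{\partial\psi}(\xi,\xi^*) \leq \psi(\xi) + \psi^*(\xi^*)$ is precisely the first half of \eqref{bipot-in-betw}, valid because the bipotential $\psi + \psi^*$ represents $\partial\psi$ (Example \ref{ex:bipotential}) while $f_{\partial\psi}$ is the minimal element of $\Ic(\partial\psi)$. All the work therefore lies in the reverse inequality.

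The key consequences of $1$-homogeneity that I plan to exploit are $\psi^{*} = I_K$ with $K := \dom(\psi^{*}) = \{\eta \in X^{*} : \langle \eta, v\rangle \leq \psi(v) \text{ for all } v \in X\}$, equivalently $\psi = \sup_{\eta \in K}\langle \eta, \cdot\rangle$, together with the identities $\partial\psi(0) = K$ and $\partial\psi(\lambda z) = \partial\psi(z)$ for every $\lambda > 0$ and $z \in X$. Both follow directly from the definition of the subdifferential combined with $\psi(0) = 0$ and $\psi(\lambda z) = \lambda\psi(z)$. I will use these to produce suitable test pairs in
\[
f_{\partial\psi}(\xi,\xi^{*}) = \sup\bigl\{\langle \xi^{*}, \xi_0\rangle + \langle \xi_0^{*}, \xi\rangle - \langle \xi_0^{*}, \xi_0\rangle : (\xi_0,\xi_0^{*}) \in \partial\psi\bigr\}.
\]

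If $\xi^{*} \in K$, so that $\psi^{*}(\xi^{*}) = 0$, I insert the pairs $(0,\xi_0^{*})$ with $\xi_0^{*} \in \partial\psi(0) = K$, giving immediately $f_{\partial\psi}(\xi,\xi^{*}) \geq \sup_{\xi_0^{*} \in K}\langle \xi_0^{*}, \xi\rangle = \psi(\xi) = \psi(\xi) + \psi^{*}(\xi^{*})$. If instead $\xi^{*} \notin K$, then $\psi^{*}(\xi^{*}) = +\infty$ and $\sup_v(\langle \xi^{*}, v\rangle - \psi(v)) = +\infty$, so I can select $y \in \dom(\partial\psi)$ with $\langle \xi^{*}, y\rangle > \psi(y)$, invoking the Br{\o}ndsted--Rockafellar density of $\dom(\partial\psi)$ in $\dom(\psi)$ (with the accompanying quantitative control on $\psi$-values) together with continuity of $\langle \xi^{*},\cdot\rangle$ to ensure the strict gap survives the approximation. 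Choosing $\xi_0^{*} \in \partial\psi(y)$ and using $(\lambda y,\xi_0^{*}) \in \partial\psi$ for every $\lambda > 0$, I obtain
\[
f_{\partial\psi}(\xi,\xi^{*}) \geq \lambda\bigl(\langle \xi^{*}, y\rangle - \psi(y)\bigr) + \langle \xi_0^{*}, \xi\rangle \to +\infty \quad \text{as } \lambda \to +\infty,
\]
yielding $f_{\partial\psi}(\xi,\xi^{*}) = +\infty = \psi(\xi) + \psi^{*}(\xi^{*})$. The only delicate point in this plan is the witness construction in the second case: the point $y$ must lie in $\dom(\partial\psi)$ while the strict inequality $\langle \xi^{*}, y\rangle > \psi(y)$ is preserved. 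Everything else reduces to the two homogeneity identities $\partial\psi(0) = K$ and $\partial\psi(\lambda z) = \partial\psi(z)$.
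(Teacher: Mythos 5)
Your proof is correct and follows essentially the same route as the paper's: the upper bound via minimality of the Fitzpatrick function among representatives (equivalently \eqref{bipot-in-betw}), and the lower bound split into testing with $(0,\xi_0^*)$, $\xi_0^*\in\partial\psi(0)=K^*$, to recover $\psi(\xi)$, plus a blow-up along rays $\lambda y$ using $\partial\psi(\lambda y)=\partial\psi(y)$ to force the value $+\infty$ when $\xi^*\notin\dom(\psi^*)$. The one point where you are more careful than the paper is the explicit appeal to Br{\o}ndsted--Rockafellar density to guarantee a witness $y\in\dom(\partial\psi)$ with $\scalx{\xi^*,y}>\psi(y)$; the paper simply picks $\xi_0^*\in\partial M_K(\xi_0)$ without justifying nonemptiness, so your extra step is a legitimate (and welcome) repair rather than a detour.
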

\noindent
 Before developing the \emph{proof}, we recall that, for all $\psi$  convex, lower semicontinuous, and $1$-homogeneous there exists
 a closed convex set $0\in K \subset X$ such that $\psi$  coincides with the
 \emph{Minkowski functional} of $K$, viz.\
 \begin{subequations}
 \label{eqns:repre-psi}
 \begin{equation}
 \label{eqns:repre-psi-a}
 \psi(\xi)=
  M_K(\xi)=
  \inf \left\{ \sigma>0\, : \ \frac{\xi}{\sigma} \in K \right \}\,.
  \end{equation}
  Furthermore,
   \begin{equation}
 \label{eqns:repre-psi-b}
 \begin{gathered}
  \psi^{*}(\xi^*) = I_{K^*}(\xi^*) \text{  where $K^*\subset X^*$ is
  the \emph{polar} set of $K$, i.e.}
  \\
  K^*= \{\xi^* \in K^* \, : \ \pi(\xi,\xi^*) \leq 1  \ \forall\, \xi \in K \},
  \end{gathered}
  \end{equation}
  so that
  \begin{equation}\label{useme}
\forall\, (\xi,\xi^*) \in \partial \psi \qquad    M_K(\xi) + I_{K^*}(\xi^*)= \pi(\xi,\xi^*)\,.
\end{equation}
  For later convenience, we also recall that
  \begin{equation}
 \label{eqns:repre-psi-c}
  \psi(\xi) = \sup_{\xi^* \in K^*} \prodpi{\xi^*}{\xi} \quad \text{for all } \xi \in X\,.
  \end{equation}
  \end{subequations}
  %%%%%%
\begin{proof}[Proof of Proposition \ref{bella-prop}]
It follows from the  definition of the Fitzpatrick  function  \eqref{fitz-formula} and
from  \eqref{useme} that
\[
f_{\partial \psi}(\xi,\xi^*) =
\sup\{ \underbrace{\prodpi{ \xi^*}{ \xi_0 } - M_K(\xi_0)}_{\leq I_{K^*}(\xi^*)}+ \underbrace{\prodpi{  \xi_0^*}{  \xi} - I_{K^*}(\xi_0^*)}_{\leq M_K(\xi)} : \  (\xi_0,\xi_0^*)\in \alpha\}
\]
Hence, we obtain that $f_{\partial \psi}(\xi,\xi^*)\leq M_K(\xi) + I_{K^*}(\xi^*) $ for all
$(\xi,\xi^*) \in X \times X^*$.

 For the opposite inequality assume first   that $\xi^* \notin K^{*}$. Then there exists  $\xi_0 \in K$ such that $\prodpi{\xi^*}{\xi_0} > 1$. Choose an arbitrary $\xi_0^* \in \partial M_K(\xi_0) = \partial M_K( \lambda \xi_0)$
 for any positive $\lambda > 0$. Then, taking into account that $ M_K(\lambda \xi_0)+I_{K^*}(\xi_0^*)= \lambda \pi(\xi_0,\xi_0^*)
 \leq \lambda$,
 we get
\begin{eqnarray*}\lefteqn{\prodpi{ \xi^*}{ \lambda \xi_0 } - M_K(\lambda \xi_0)
 + {\prodpi{  \xi_0^*}{  \xi} - I_{K^*}(\xi_0^*) } } & & \\ & \geq & \lambda \left( \prodpi{ \xi^*}{ \xi_0 }-1\right) + \prodpi{  \xi_0^*}{  \xi} \rightarrow + \infty \qquad \text{ as } \lambda \rightarrow \infty.
\end{eqnarray*}
Therefore $f_{\partial \psi}(\xi,\xi^*) \geq I_{K^*}(\xi^*)$. On the other hand,
taking into account that
 $\partial M_K(0) = K^*$, we  deduce that
\[
 f_{\partial \psi}(\xi,\xi^*) \geq \sup\left\{\prodpi{\xi_0^*}{\xi}: \xi_0^*\in K^* \right\} = M_K(\xi)\,.
\]
Eventually,  we get that $f_{\partial \psi}(\xi,\xi^*)\geq M_K(\xi) + I_{K^*}(\xi^*) $ for all $(\xi,\xi^*) \in X \times X^*$, which concludes the proof.
\end{proof}

%\begin{nr} ho ancora un problema con questa dimostrazione.. \end{nr}
%\begin{nrate}
\begin{corollary}
\label{cor:singleton}
Let $\psi:X\rightarrow (-\infty,\infty]$ be convex, lower semicontinuous and  positively
 homogeneous of degree $1$.
Then
\begin{equation}
\label{eq:cor2.6}  f_{\partial\psi}(\xi,\xi^*) = \psi(\xi) + \psi^* (\xi^*)= \rho_{\partial\psi^*}(\xi,\xi^*)   \qquad \text{for all }(\xi,\xi^*)\in X \times X^*.
\end{equation}
%hence the functional interval $\mathcal{I}(\partial\psi)$ reduces to the self-dual representative
%$\psi+\psi^*$.
\end{corollary}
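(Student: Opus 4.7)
The first equality $f_{\partial\psi}(\xi,\xi^*) = \psi(\xi) + \psi^*(\xi^*)$ is precisely the content of Proposition \ref{bella-prop} just proved, so only the identity $\psi(\xi) + \psi^*(\xi^*) = \rho_{\partial\psi^*}(\xi,\xi^*)$ remains to be established.

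My plan is to exploit the duality formula \eqref{2dual-fitz-penot}, which writes the Penot function of an operator as a partial Legendre--Fenchel conjugate of the Fitzpatrick function of its inverse. Since $\psi$ is proper, convex, and lower semicontinuous, we have $\psi^{**}=\psi$, so $(\partial\psi^*)^{-1} = \partial\psi^{**} = \partial\psi$. Plugging this identification into \eqref{2dual-fitz-penot} yields
\[
\rho_{\partial\psi^*}(\xi,\xi^*) = \sup_{(\xi_0,\xi_0^*)\in X\times X^*}\Bigl\{\prodpi{\xi^*}{\xi_0}+\prodpi{\xi_0^*}{\xi} - f_{\partial\psi}(\xi_0,\xi_0^*)\Bigr\}.
\]
Substituting the first equality (already proved) for $f_{\partial\psi}$ inside the supremum, the integrand separates into a sum of a term depending only on $\xi_0$ and a term depending only on $\xi_0^*$.

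The decoupled supremum then reads
\[
\rho_{\partial\psi^*}(\xi,\xi^*) = \sup_{\xi_0\in X}\bigl\{\prodpi{\xi^*}{\xi_0} - \psi(\xi_0)\bigr\} + \sup_{\xi_0^*\in X^*}\bigl\{\prodpi{\xi_0^*}{\xi} - \psi^*(\xi_0^*)\bigr\} = \psi^*(\xi^*) + \psi^{**}(\xi),
\]
and $\psi^{**}=\psi$ closes the argument. I expect the main obstacle to be purely notational: tracking the duality convention underlying formula \eqref{2dual-fitz-penot}, and ensuring that $\partial\psi^*$ is interpreted consistently throughout (either as a subset of $X^*\times X$ or, via Legendre--Fenchel inversion, as $\partial\psi\subset X\times X^*$). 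Once this bookkeeping is settled, no further analytic estimates are required: the decoupling reduces everything to two standard Legendre--Fenchel conjugacies, each of which evaluates in one line.
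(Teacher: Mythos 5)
Your argument is correct and is essentially the paper's own proof read in the opposite direction: both rest on Proposition \ref{bella-prop}, the duality formula \eqref{2dual-fitz-penot} identifying $\rho_{\partial\psi^*}$ with the conjugate of $f_{\partial\psi}$, and the self-duality of the bipotential $\psi+\psi^*$ (which you verify explicitly via the decoupled conjugation and Fenchel--Moreau, and which the paper simply invokes). The notational point you flag about transposing arguments for $\alpha^{-1}\subset X^*\times X$ is real but harmless, and the paper glosses over it in exactly the same way.
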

\begin{proof}
To prove \eqref{eq:cor2.6}
we observe that
\[
f_{\partial\psi} = \psi + \psi^* =(\psi + \psi^* )^*=  f_{\partial\psi}^*= \rho_{\partial \psi^*}
\]
where the first identity is due to
Proposition \ref{bella-prop}, the second one to the fact that $\psi + \psi^* $ is self-dual, and the last one to \eqref{2dual-fitz-penot}.
\end{proof}
%\end{nrate}

%it is sufficient to
%check that $\rho_{\partial \psi}$ coincides with the bipotential
%$\psi+\psi^*$. To  this aim, we observe that, by conjugation,
%\begin{equation}
%\label{nice-chain}
% \rho_{\partial\psi}^* \leq (\psi+\psi^*)^*  \leq  \psi+\psi^* \leq \rho_{\partial\psi}.
%\end{equation}
%\begin{nr} e poi come si conclude?? \end{nr}
%Again by conjugation, from $ \rho_{\partial\psi}^* \leq
%\rho_{\partial\psi}$ we conclude the converse inequality $
%%\rho_{\partial\psi} \leq  \rho_{\partial\psi}^*$, hence  all
%inequalities in \eqref{nice-chain} turn out to hold as equalities,
%and we conclude \eqref{bella-penot}.

\subsection{Approximation of maximal monotone operators}
In the next lines,  the symbol
$\mathsf{X}$ may stand for the space $X$, for $X^*$, or for $X \times X^*$.
Let $\mathsf{f}_n$, $\mathsf{f}: \mathsf{X} \to (-\infty,\infty]$ be  convex, proper, and l.s.c.\
functionals
  and let $\alpha_n$, $\alpha \subset X \times X^*$ be maximal monotone operators.
We introduce
the notation
\begin{align}
 \displaystyle\Gammaliminf_{n\to \infty} \mathsf{f}_n (\mathsf{x}) &:= \min\{ \liminf_{n\to \infty}  \mathsf{f}_n(\mathsf{x}_n), \  \mathsf{x}_n \wconv \mathsf{x} \text{ in $\mathsf{X}$}\},\nonumber\\
\displaystyle\Gammalimsup_{n\to \infty} \mathsf{f}_n(\mathsf{x})&:= \min\{\limsup_{n\to \infty}  \mathsf{f}_n(\mathsf{x}_n), \  \mathsf{x}_n \to \mathsf{x}  \text{ in $\mathsf{X}$}\}.\nonumber
\end{align}
These correspond to the classical $\Gammaliminf$ and $\Gammalimsup$ constructions (cf.\ e.g.\
\cite{DalMaso93}),
 with respect to the weak and the strong topology of $\mathsf{X}$, respectively. We will use the following convergence notions, for which the reader
is referred to \cite{Attouch}:
\begin{align}
 \mathsf{f}_n \Mosco \mathsf{f} \quad &\Leftrightarrow \quad \displaystyle\Gammalimsup_{n\to \infty} \mathsf{f}_n\leq \mathsf{f}\leq \displaystyle\Gammaliminf_{n\to \infty} \mathsf{f}_n
 \nonumber\\
\alpha_n \Graph \alpha  \quad &\Leftrightarrow \quad
\forall (\xi,\xi^*) \in \alpha \ \ \exists (\xi_n,\xi_n^*) \in \alpha_n \ : \ \xi_n \to \xi, \ \xi_n^* \to \xi^*.
  \nonumber
\end{align}
where the symbol $\Mosco$ stands for {\it Mosco convergence} in $\mathsf{X}$ and $\Graph$ is usually referred to as {\it graph convergence}. In particular, Mosco convergence corresponds to $\Gamma$-convergence with respect to both the strong and the weak topology of $\mathsf{X}$ and can be made more explicit by
$$
\mathsf{f}_n \Mosco \mathsf{f} \quad \Leftrightarrow \quad
\left\{
  \begin{array}{l}
 \forall \mathsf{x}_n \wconv \mathsf{x}, \quad \mathsf{f}(\mathsf{x}) \leq \displaystyle \liminf_{n \to \infty} \mathsf{f}_n(\mathsf{x}_n)  \\
    \forall \mathsf{x} \in \mathsf{X},\  \exists \mathsf{x}_n  \to \mathsf{x} \ : \ \mathsf{f}_n(\mathsf{x}_n)\to \mathsf{f}(\mathsf{x})
  \end{array}
\right.
$$

In the case of cyclically monotone operators, graph convergence is known to be equivalent to the Mosco convergence of the respective potentials (up to some normalization condition), viz.\ we have the following result.

\begin{theorem}[{\cite[Thm. 3.66, p. 373]{Attouch}}]\label{Attouch}
Let
$\phi_n, \ \phi : X \to (-\infty,+\infty]$ be proper, convex and lower semicontinuous functionals.
The following are equivalent:
  \begin{itemize}
 \item[i)] $\partial \phi_n \Graph \partial \phi$ in
  $X \times X^*$
  and there exist $(\xi_n,\xi_n^*) \in \partial \phi_n$ such that $\xi_n \to \xi $ in $X$, $\xi_n^* \to \xi^*$  in $X^*$, $\phi_n(\xi_n)\to \phi(\xi)$, and $(\xi,\xi^*)\in \partial \phi$;
 \item[ii)] $\phi_n \Mosco \phi$ in $X$.
\end{itemize}
\end{theorem}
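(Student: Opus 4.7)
My plan is to use the Moreau--Yosida regularization as the bridge between the two convergence notions. Let $J_\lambda^n := (I + \lambda \partial \phi_n)^{-1}$ denote the resolvent of $\partial \phi_n$ and let $\phi_n^\lambda(x) := \inf_{y \in X}\{\phi_n(y) + \tfrac{1}{2\lambda}\|y-x\|^2\}$ be the Moreau envelope of $\phi_n$ (and analogously for $\phi$). The two key structural identities I would invoke repeatedly are
\[
\phi_n(J_\lambda^n x) \;=\; \phi_n^\lambda(x) - \tfrac{1}{2\lambda}\|J_\lambda^n x - x\|^2 \qquad \text{and} \qquad \frac{x - J_\lambda^n x}{\lambda} \in \partial \phi_n (J_\lambda^n x),
\]
which allow one to trade quantitative information between envelopes, resolvents, and subgradients.

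To prove (ii)$\Rightarrow$(i), I would start from $\phi_n \Mosco \phi$ and notice that for each $x \in X$ the perturbed family $\phi_n(\cdot) + \tfrac{1}{2\lambda}\|\cdot-x\|^2$ still Mosco-converges to $\phi(\cdot) + \tfrac{1}{2\lambda}\|\cdot-x\|^2$ (the quadratic part is strongly continuous) and is equi-coercive with a unique minimizer. The standard variational-convergence machinery then gives simultaneously
\[
J_\lambda^n x \to J_\lambda x \qquad \text{and} \qquad \phi_n^\lambda(x) \to \phi^\lambda(x).
\]
The first is the classical resolvent characterization of graph convergence for maximal monotone operators (Brezis), yielding $\partial \phi_n \Graph \partial \phi$; the second, combined with the envelope identity, gives $\phi_n(J_\lambda^n x) \to \phi(J_\lambda x)$ for any $x \in \mathrm{dom}\,\phi$, and choosing $\xi_n := J_\lambda^n x$ with $\xi_n^* := (x-\xi_n)/\lambda$ supplies the normalization pair required in (i).

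To prove (i)$\Rightarrow$(ii), I would first derive the $\Gamma$-liminf inequality. Fix $x_n \wconv x$ with $\liminf_n \phi_n(x_n) < \infty$; for every $(\eta,\eta^*) \in \partial \phi$, graph convergence selects $(\eta_n,\eta_n^*) \in \partial \phi_n$ with $\eta_n \to \eta$ and $\eta_n^* \to \eta^*$. \emph{Granting the propagation claim} $\phi_n(\eta_n) \to \phi(\eta)$, the subgradient inequality
\[
\phi_n(x_n) \;\geq\; \phi_n(\eta_n) + \langle \eta_n^*, x_n - \eta_n\rangle
\]
passes to the liminf and gives $\liminf_n \phi_n(x_n) \geq \phi(\eta) + \langle \eta^*, x-\eta\rangle$. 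Taking the supremum over $(\eta,\eta^*) \in \partial \phi$ and invoking Br{\o}ndsted--Rockafellar (whereby $\phi$ is the pointwise supremum of the affine minorants arising from its subgradients on $\overline{\mathrm{dom}\,\phi}$) yields $\phi(x) \leq \liminf_n \phi_n(x_n)$. For the recovery sequence, I would again use $\xi_n^\lambda := J_\lambda^n x$: resolvent convergence plus the propagation claim give $\xi_n^\lambda \to J_\lambda x$ and $\phi_n(\xi_n^\lambda) \to \phi(J_\lambda x)$, while $J_\lambda x \to x$ and $\phi(J_\lambda x) \to \phi(x)$ as $\lambda \downarrow 0$; a diagonal extraction then produces $x_n \to x$ with $\phi_n(x_n) \to \phi(x)$.

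The main obstacle I foresee is precisely the \emph{propagation of the normalization} from the single anchor pair $(\xi,\xi^*)$ to arbitrary $(\eta,\eta^*) \in \partial \phi$. Monotonicity alone only yields the affine lower bound $\liminf_n \phi_n(\eta_n) \geq \phi(\xi) + \langle \xi^*,\eta-\xi\rangle$, which is strictly weaker than $\phi(\eta)$ in general. The cleanest way around this, in my plan, is to upgrade resolvent convergence plus the anchor to pointwise convergence of the Moreau envelopes $\phi_n^\lambda \to \phi^\lambda$ (the anchor fixes the additive constant that could a priori be lost by a mere $\Gamma$-limit argument), and then deploy the envelope identity at $y := \eta + \lambda \eta^*$: since $J_\lambda^n y_n$ (with $y_n := \eta_n + \lambda \eta_n^* \to y$) converges to $J_\lambda y = \eta$ and $\phi_n^\lambda(y_n) \to \phi^\lambda(y)$, the squeeze forces $\phi_n(\eta_n) \to \phi(\eta)$, closing the argument.
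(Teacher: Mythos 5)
The paper does not actually prove Theorem \ref{Attouch}: it is quoted from Attouch's monograph \cite[Thm.\ 3.66]{Attouch} and used as a black box, so there is no in-paper argument to compare against. Your proposal reconstructs essentially the classical proof from that reference: both implications are routed through the Moreau--Yosida regularization, using the resolvent characterization of graph convergence, the envelope identity to convert convergence of $\phi_n^\lambda$ and of $J_\lambda^n$ into convergence of the values $\phi_n(J_\lambda^n x)$, and the anchor pair $(\xi_n,\xi_n^*)$ to pin down the additive constant. The architecture is sound, and you have correctly isolated the crux, namely the propagation of the normalization from the single anchor to arbitrary points of $\partial\phi$; your use of Br{\o}ndsted--Rockafellar to write $\phi$ as the supremum of the affine minorants attached to $\partial\phi$ is also the right tool for the $\liminf$ inequality.

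Two points need attention before this is a complete proof. First, $X$ is a reflexive Banach space, not a Hilbert space, so the formulas $J_\lambda^n=(I+\lambda\partial\phi_n)^{-1}$, $(x-J_\lambda^n x)/\lambda\in\partial\phi_n(J_\lambda^n x)$ and $y=\eta+\lambda\eta^*$ do not typecheck: $\partial\phi_n$ maps into $X^*$. You must first renorm $X$ (Troyanski--Asplund) so that $X$ and $X^*$ are locally uniformly convex with differentiable norms --- this alters neither Mosco nor graph convergence --- and then insert the duality mapping $F:X\to X^*$ throughout, e.g.\ $\lambda^{-1}F(x-J_\lambda^n x)\in\partial\phi_n(J_\lambda^n x)$ and $y=\eta+\lambda F^{-1}(\eta^*)$; the Kadec--Klee property of the renorm is also what upgrades the weak convergence of the minimizers to the strong convergence required in (ii)$\Rightarrow$(i). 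Second, the step ``upgrade resolvent convergence plus the anchor to pointwise convergence of $\phi_n^\lambda$'' is asserted rather than proved, and it is the technical heart of (i)$\Rightarrow$(ii). It does hold: $\phi_n^\lambda$ is convex and differentiable with gradient equal to the Yosida approximation $\lambda^{-1}F(\,\cdot-J_\lambda^n\cdot\,)$, which converges pointwise by resolvent convergence and is locally equibounded by monotonicity against the anchor, while the anchor gives $\phi_n^\lambda(y_n)\to\phi^\lambda(y)$ at $y_n=\xi_n+\lambda F^{-1}(\xi_n^*)$; integrating the gradient along segments emanating from $y_n$ then yields $\phi_n^\lambda\to\phi^\lambda$ locally uniformly, which is exactly what your squeeze argument consumes. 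With these two items filled in (and the routine verification that Mosco convergence makes the perturbed functionals equi-coercive), your argument is complete and coincides in substance with Attouch's original proof.
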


The importance of graph convergence is revealed by the following identification lemma, basically consisting of the {\it approximation version} of \cite[Prop. 2.5, p. 27]{Brezis73}.

\begin{lemma}\label{approximation}
  Let $\alpha_n \Graph \alpha$, $(\xi_n,\xi_n^*) \in \alpha_n$, $\xi_n \wconv \xi$, $\xi_n^* \wconv \xi$, and $\liminf_{n \to \infty} \prodpi{ \xi_n^*}{ \xi_n } \leq \prodpi{ \xi^*}{ \xi }$. Then $(\xi,\xi^*) \in \alpha$.
\end{lemma}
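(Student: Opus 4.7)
The statement is the graph-convergent analogue of the classical Brezis identification lemma, so my strategy is to combine the monotonicity of each $\alpha_n$ with the recovery sequence provided by $\alpha_n \Graph \alpha$ and conclude by maximality of $\alpha$. The first preliminary remark is that, since the $\alpha_n$ are maximal monotone and $X$ is reflexive, the graph limit $\alpha$ is again maximal monotone, so it suffices to verify
\[
\langle \xi^* - \eta^*, \xi - \eta \rangle \geq 0 \qquad \text{for every } (\eta,\eta^*) \in \alpha,
\]
and then invoke maximality to conclude that $(\xi,\xi^*) \in \alpha$.

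\textbf{Key step.} Fix $(\eta,\eta^*) \in \alpha$. By the definition of graph convergence $\alpha_n \Graph \alpha$, there exists a sequence $(\eta_n,\eta_n^*) \in \alpha_n$ with $\eta_n \to \eta$ \emph{strongly} in $X$ and $\eta_n^* \to \eta^*$ \emph{strongly} in $X^*$. Since $(\xi_n,\xi_n^*) \in \alpha_n$ and $\alpha_n$ is monotone,
\[
\langle \xi_n^* - \eta_n^*, \xi_n - \eta_n \rangle \geq 0,
\]
which I expand into the four duality pairings
\[
\langle \xi_n^*, \xi_n \rangle - \langle \xi_n^*, \eta_n \rangle - \langle \eta_n^*, \xi_n \rangle + \langle \eta_n^*, \eta_n \rangle \geq 0.
\]

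\textbf{Passing to the limit.} The assumption on the diagonal pairing gives $\liminf_n \langle \xi_n^*, \xi_n \rangle \leq \langle \xi^*, \xi \rangle$. The three remaining pairings all involve one weakly convergent and one strongly convergent factor, so that
\[
\langle \xi_n^*, \eta_n \rangle \to \langle \xi^*, \eta \rangle, \quad \langle \eta_n^*, \xi_n \rangle \to \langle \eta^*, \xi \rangle, \quad \langle \eta_n^*, \eta_n \rangle \to \langle \eta^*, \eta \rangle.
\]
Taking the $\liminf$ in the monotonicity inequality and using these three convergences (as plain limits, which may be pulled outside the $\liminf$) then yields
\[
0 \leq \langle \xi^*, \xi \rangle - \langle \xi^*, \eta \rangle - \langle \eta^*, \xi \rangle + \langle \eta^*, \eta \rangle = \langle \xi^* - \eta^*, \xi - \eta \rangle.
\]
Since $(\eta,\eta^*) \in \alpha$ was arbitrary, maximal monotonicity of $\alpha$ forces $(\xi,\xi^*) \in \alpha$.

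\textbf{Main obstacle.} The proof itself is short; the only delicate point is the justification of maximality of the graph limit $\alpha$ (needed to conclude from the cyclic inequality), and the care required with the $\liminf$ arithmetic — the inequality direction works exactly because the three mixed pairings converge as limits, so they can be separated from the $\liminf$ of the diagonal term without loss. This is precisely where the strong convergence supplied by $\alpha_n \Graph \alpha$ is indispensable: if only weak convergence of the recovery sequence were available, the mixed pairings $\langle \eta_n^*, \xi_n \rangle$ and $\langle \xi_n^*, \eta_n \rangle$ could not be computed in the limit and the argument would break.
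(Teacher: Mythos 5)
Your proof is correct, and in substance it coincides with the paper's argument: the paper derives the lemma from the implication $i)\Rightarrow ii)$ of Theorem \ref{Attouch2} (namely $f_\alpha \le \Gammaliminf_{n\to\infty} f_{\alpha_n}$), whose proof is exactly your computation — the monotonicity inequality $\scalx{\xi_n^*-\eta_n^*,\xi_n-\eta_n}\ge 0$ tested against the strong recovery sequence for an arbitrary $(\eta,\eta^*)\in\alpha$, combined with the hypothesis $\liminf_n\scalx{\xi_n^*,\xi_n}\le\scalx{\xi^*,\xi}$; the paper then closes via the representation property $f_\alpha(\xi,\xi^*)=\pi(\xi,\xi^*)\Leftrightarrow(\xi,\xi^*)\in\alpha$, which is equivalent to your direct appeal to maximality. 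One small imprecision: you assert that the graph limit of maximal monotone operators is automatically maximal monotone, but with the one-sided definition of $\Graph$ used here (only a recovery-sequence condition) this is not automatic — it is harmless only because the maximality of $\alpha$ is part of the standing hypotheses of the subsection, so you should invoke it as an assumption rather than derive it.
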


In order to prove this, quite classical, approximation lemma, in the particular case of cyclically monotone operators, what is actually needed is just the implication $\, i) \Rightarrow ii) \, $
in Theorem \ref{Attouch}
above. Indeed, let $\partial \phi_n \Graph \partial \phi$, $(\xi_n,\xi_n^*) \in \partial \phi_n$, with $\xi_n \wconv \xi$, $\xi_n^* \wconv \xi^*$, and $\liminf_{n \to \infty}\prodpi{ \xi_n^*}{ \xi_n} \leq \prodpi{ \xi^*}{ \xi }$ be given. We readily have that
$$0 \leq  \phi(\xi) + \phi^*(\xi^*)-\prodpi{ \xi^*}{\xi }  \leq \liminf_{n\to \infty}\big(  \phi_n(\xi_n) + \phi^*_n(\xi_n^*)-  \prodpi{ \xi_n^*}{\xi_n } \big)=0$$
where we have used the fact that $\phi_n^* \Mosco \phi^*$ iff $\phi_n \Mosco \phi$ \cite[Thm.\ 3.18, p.\ 295]{Attouch}, i.e. the bicontinuity of the Legendre-Fenchel transformation with respect to the topology induced by the Mosco convergence. Therefore we conclude that $(\xi,\xi^*) \in \alpha$.

\begin{remark}
\label{rmk:graph_vs_mosco}
As indeed one just needs
$$\partial \phi_n \Graph \partial \phi \quad \Rightarrow \quad \phi \leq \Gammaliminf_{n\to \infty} \phi_n \ \ \text{in} \  \ V \ \ \text{and} \ \  \phi^* \leq \Gammaliminf_{n\to \infty} \phi_n^* \ \ \text{in} \  \ V^* $$
in order to check for Lemma \ref{approximation}, one may wonder if asking directly the two $\Gammaliminf$ conditions above would weaken the convergence requirements on the functionals. This is however not the case. Indeed, under some very general condition of {\it equi-properness} type, we have that the two separate $\Gammaliminf$ conditions  are indeed equivalent to $\phi_n \Mosco \phi$ \cite[Lemma 4.1]{be} and hence entail $\partial \phi_n \Graph \partial \phi$.
\end{remark}

Our next aim is that of extending the above arguments to the case of noncyclically maximal monotone operators. In particular, we present an extension of
 Theorem \ref{Attouch} in terms
  of representative functions,
   and in particular of the Fitzpatrick and of the Penot functionals.
  % What is needed is simply to replace
  % the functional
  % $(\xi,\xi^*) \mapsto \phi(\xi) + \phi^*(\xi^*)- \prodpi{ \xi^*}{\xi }$ by any representing function and prove the corresponding $\Gammaliminf$ inequality. This will follow from implication $\,i) \Rightarrow ii)\,$ below.
%\begin{nr} check \end{nr}

 \begin{theorem}\label{Attouch2}
 Let $(\alpha_n ),\, \alpha$ be  maximal monotone operators $\alpha_n, \, \alpha : X \rightrightarrows X^*$.
 The following are equivalent:
   \begin{itemize}
 \item[i)] $\alpha_n \Graph \alpha$  \ \ \text{in $X \times X^*$},
 \item[ii)] $f_\alpha \leq \displaystyle\Gammaliminf_{n\to \infty} f_{\alpha_n} $ \ \ \text{in $X \times X^*$},
  \item[iii)]
$\Gammalimsup_{n\to \infty} \rho_{\alpha_n} \leq \rho_\alpha$ \ \ \text{in $X \times X^*$}.
\end{itemize}
  \end{theorem}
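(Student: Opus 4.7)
The equivalence splits naturally into $(i)\Rightarrow(ii)$ and $(i)\Rightarrow(iii)$ (both direct) together with one reverse implication, and the plan is to exploit the duality between the Fitzpatrick and Penot functions to close the cycle.

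For $(i)\Rightarrow(ii)$, I would apply the sup formula \eqref{fitz-formula} for $f_{\alpha_n}$. Given $(\xi_n,\xi_n^*)\wconv(\xi,\xi^*)$ and any $(\xi_0,\xi_0^*)\in\alpha$, graph convergence supplies strongly convergent $(\xi_{0,n},\xi_{0,n}^*)\in\alpha_n$. Using them as test points in \eqref{fitz-formula} yields
\[
f_{\alpha_n}(\xi_n,\xi_n^*) \;\geq\; \prodpi{\xi_n^*}{\xi_{0,n}}+\prodpi{\xi_{0,n}^*}{\xi_n}-\prodpi{\xi_{0,n}^*}{\xi_{0,n}},
\]
where the awkward term $\pi(\xi_n,\xi_n^*)$ has cancelled, leaving only weak--strong and strong--strong duality pairings, which pass to the limit. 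Taking the $\liminf$ and then the supremum over $(\xi_0,\xi_0^*)\in\alpha$ reconstructs exactly $f_\alpha(\xi,\xi^*)$. For $(i)\Rightarrow(iii)$, I would exploit that $\rho_\alpha=(\pi+I_\alpha)^{**}$ is the lsc convex hull of $\pi+I_\alpha$: any $(\xi,\xi^*)$ with $\rho_\alpha(\xi,\xi^*)<\infty$ is approximated by convex combinations $\sum_j\lambda_j(\eta_j,\eta_j^*)$ of points in $\alpha$ with $\sum_j\lambda_j\pi(\eta_j,\eta_j^*)$ within $\epsilon$ of $\rho_\alpha(\xi,\xi^*)$. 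Substituting strongly convergent approximants $(\eta_j^n,\eta_j^{*,n})\in\alpha_n$ (from graph convergence) and using the convexity of $\rho_{\alpha_n}$ together with $\rho_{\alpha_n}=\pi$ on the graph produces a strong recovery sequence, after a diagonal extraction in $\epsilon$.

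To close the cycle via $(ii)\Rightarrow(i)$, I would first upgrade $(ii)$ to $(iii)$ by duality: the identities \eqref{1dual-fitz-penot}--\eqref{2dual-fitz-penot} identify $f_\alpha$ as the Legendre--Fenchel conjugate of $\rho_{\alpha^{-1}}$ under the reflexive pairing on $X\times X^*$, and Attouch's conjugation principle \cite[Thm.~3.18]{Attouch} exchanges weak-$\Gammaliminf$ with strong-$\Gammalimsup$ on conjugate functionals; combined with the trivial invariance $\alpha_n\Graph\alpha\Leftrightarrow\alpha_n^{-1}\Graph\alpha^{-1}$, this delivers $(iii)$ from $(ii)$. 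Then, for a fixed $(\xi,\xi^*)\in\alpha$ (so $\rho_\alpha(\xi,\xi^*)=\pi(\xi,\xi^*)$), the strong recovery sequence $(\xi_n,\xi_n^*)\to(\xi,\xi^*)$ from $(iii)$, together with the universal bound $\rho_{\alpha_n}\geq\pi$, forces $\rho_{\alpha_n}(\xi_n,\xi_n^*)-\pi(\xi_n,\xi_n^*)\to 0$. Since $\rho_{\alpha_n}=(\pi+I_{\alpha_n})^{**}$, this defect equals (in the limit of approximate convex-combination representations $(\xi_n,\xi_n^*)\approx\sum_j\lambda_j^n(\eta_j^n,\eta_j^{*,n})$) the weighted pairwise monotonicity gap
\[
\tfrac{1}{2}\sum_{j,k}\lambda_j^n\lambda_k^n\,\prodpi{\eta_j^{*,n}-\eta_k^{*,n}}{\eta_j^n-\eta_k^n},
\]
whose vanishing, combined with a selection/concentration argument and the identification Lemma~\ref{approximation} applied to the surviving cluster, yields a genuine sequence $(\hat\xi_n,\hat\xi_n^*)\in\alpha_n$ converging strongly to $(\xi,\xi^*)$.

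The main obstacle is the last selection step in the reverse implication: extracting a single strongly convergent graph element from approximate convex combinations whose pairwise monotonicity defects merely vanish \emph{on weighted average}. Mere monotonicity does not force concentration on a single cluster; one must use the maximal monotonicity of $\alpha_n$ and $\alpha$ (via their self-dual representatives from Proposition~\ref{repres2}) to rule out the splitting of the convex-combination support, and then apply Lemma~\ref{approximation} to guarantee that the extracted limit lies in $\alpha$. The remaining steps are essentially bookkeeping exploiting the sup/infimum structure of $f_{\alpha_n}$ and $\rho_{\alpha_n}$.
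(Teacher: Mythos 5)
Your two forward implications are essentially sound. The argument for $(i)\Rightarrow(ii)$ is exactly the paper's: plug strongly convergent graph approximants into the sup formula \eqref{fitz-formula} and pass to the limit in weak--strong pairings. Your direct recovery-sequence construction for $(i)\Rightarrow(iii)$, via convex combinations of graph points and the convexity of $\rho_{\alpha_n}$, is a legitimate alternative to the paper's route, which instead derives $(iii)$ from the already-proved $(ii)$ applied to the inverse operators, by conjugation together with \eqref{2dual-fitz-penot} and the compatibility of the Legendre--Fenchel transform with $\Gamma$-limits \cite[Thm.\ III.3.7]{Attouch}. Your reduction of $(ii)\Rightarrow(i)$ to a statement of type $(iii)$ for the inverses is also what the paper does.

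The genuine gap sits in $(iii)\Rightarrow(i)$, precisely at the step you yourself flag as the main obstacle. Given a strong recovery sequence $(\xi_n,\xi_n^*)\to(\xi,\xi^*)$ with $\rho_{\alpha_n}(\xi_n,\xi_n^*)-\pi(\xi_n,\xi_n^*)\to 0$, you must produce actual elements of $\graph(\alpha_n)$ converging strongly to $(\xi,\xi^*)$, and the proposed concentration analysis of approximate convex-combination representations does not do this. First, $\rho_{\alpha_n}=(\pi+I_{\alpha_n})^{**}$ is a biconjugate, so the identification of the defect with a weighted pairwise monotonicity gap holds only through limits of representations; more seriously, vanishing of that \emph{weighted average} does not force concentration on a single cluster, because maximal monotone operators admit distinct graph points $(\eta_1,\eta_1^*)\neq(\eta_2,\eta_2^*)$ with $\scalx{\eta_1^*-\eta_2^*,\eta_1-\eta_2}=0$ (rotations, flat pieces), so a nontrivial mixture can have zero gap while its barycentre is not close to the graph. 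Appealing to ``maximal monotonicity via self-dual representatives'' names the needed quantitative input without supplying it, and Lemma \ref{approximation} cannot rescue the step since it only identifies \emph{weak} limits of points that are already in the graphs. The paper closes this step with the Br\o nsted--Rockafellar-type theorem for representative functions of maximal monotone operators \cite[Thm.\ 3.4]{Marques-Alves-Sveiter08}: if $\rho_{\alpha_n}(\xi_n,\xi_n^*)\leq\pi(\xi_n,\xi_n^*)+\veps_n$, then there exists $(\tilde\xi_n,\tilde\xi_n^*)\in\alpha_n$ with $\|\xi_n-\tilde\xi_n\|^2\leq\veps_n$ and $\|\xi_n^*-\tilde\xi_n^*\|_*^2\leq\veps_n$. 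This is exactly the concentration estimate your sketch is missing; with it, a diagonal extraction immediately yields $\alpha_n\Graph\alpha$.
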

  %%%%

  Exactly as  above, the proof of Lemma \ref{approximation} follows just from  implication $i) \Rightarrow ii)$. We shall however give a full equivalence proof for the sake of completeness and comparison with Theorem \ref{Attouch}. In particular, note that Condition $ii)$ above is weaker than $f_{\alpha_n}\Mosco f_\alpha$.  %Namely, we have that
%$$f_{\alpha_n}\Mosco f_\alpha \quad \Rightarrow \quad \alpha_n \Graph \alpha.$$
%On the other hand, no assumption is made in $i)$ as for the existence of sequences $u_n \to u$ and $v_n \to v$ with $(\xi_n,\xi_n^*)\in \alpha_n$, $(\xi,\xi^*)\in \alpha$, and $\phi_n(u_n)\to \phi(u)$.
That is to say that the former Theorem \ref{Attouch} does not follow directly from Theorem \ref{Attouch2}.

  \begin{proof}
  \textbf{Claim $1$: $i) \Rightarrow ii)$.}  Fix $(\xi_0,\xi_0^*)\in \alpha$ and let $(\xi_{0n}, \xi_{0n}^*)\in \alpha_n$ be such that $\xi_{0n} \to \xi_0$ in $X$ and $\xi_{0n}^* \to \xi_0^*$ in $X^*$. Moreover, let $\xi_n \wconv \xi$
   in $X$
   and $\xi_n^* \wconv \xi^*$ in $X^*$. We have that
   \begin{align}
   \liminf_{n\to \infty}  f_{\alpha_n}(\xi_n,\xi_n^*)&\geq \liminf_{n\to \infty}  \left( \prodpi{ \xi_n^*}{ \xi_{0n}} - \prodpi{ \xi_{0n}^*}{\xi_{0n} - \xi_n } \right) \nonumber\\
&= \prodpi{ \xi^*}{ \xi_{0}} - \prodpi{ \xi_{0}^*}{\xi_{0} - \xi }.\nonumber
   \end{align}
In particular, by passing to the supremum with respect to  $(\xi_0,\xi_0^*)\in \alpha$, we conclude that $f_\alpha \leq \Gammaliminf_{n\to \infty}f_{\alpha_n}$.

\noindent
\textbf{Claim $2$: $i) \Rightarrow iii)$.}
Observe that  $\alpha_n \Graph \alpha$  implies $\alpha_n^{-1} \Graph \alpha^{-1} $, hence Claim $1$
yields $f_{\alpha^{-1}} \leq \displaystyle\Gammaliminf_{n\to \infty} f_{\alpha_n^{-1}} $.
By convex conjugation and
taking into account \eqref{2dual-fitz-penot} and \cite[Thm.\ III.3.7, p.\ 271]{Attouch}, we then have
\[
\rho_\alpha= (f_{\alpha^{-1}})^* \geq \left( \Gammaliminf_{n\to \infty} f_{\alpha_n^{-1}}  \right)^*
= \Gammalimsup_{n\to \infty} \rho_{\alpha_n}\,.
\]

\noindent
\textbf{Claim $3$: $iii) \Rightarrow i)$.}
 Fix $(\xi,\xi^*)\in \alpha$ and let $(\xi_n,\xi_n^*)\in \alpha_n$
 fulfill $\xi_n \to \xi$ in $X$, $\xi_n^* \to \xi^*$ in $X^*$, and
$$\limsup_{n \to \infty}\rho_{\alpha_n}(\xi_n,\xi_n^*) \leq \rho_\alpha(\xi,\xi^*)=\prodpi{ \xi^*}{ \xi}$$
(such sequences exist as $\Gammalimsup_{n\to \infty}\rho_{\alpha_n} \leq \rho_\alpha$). In particular, we have that
$$\rho_{\alpha_n}(\xi_n, \xi_n^*) <\prodpi{ \xi^*}{ \xi} + \veps_n $$
for some sequence $\veps_n \to 0$. By exploiting the extension of the {\it Br\o nsted-Rockafellar} approximation
Lemma from \cite[Thm. 3.4]{Marques-Alves-Sveiter08}, we have that there exist $(\tilde \xi_n,\tilde \xi_n^*)\in \alpha_n$ such that
 for all $n \in \N$
$$ \| \xi_n - \tilde \xi_n \|^2 \leq {\veps_n}, \quad \| \xi_n^* - \tilde \xi_n^* \|_*^2 \leq {\veps_n}.$$
Then, a  classical  diagonal-extraction  argument yields $ \tilde \xi_n \to \xi$
in $X$ and
$\tilde \xi_n^* \to \xi^*$ in $X^*$.

\noindent
\textbf{Claim $4$: $ii) \Rightarrow i)$.}
Again by convex conjugation, and \eqref{2dual-fitz-penot}, we deduce from \emph{ii)} that
\[
\rho_{\alpha^{-1}} \geq
 \Gammalimsup_{n\to \infty} \rho_{\alpha_n^{-1}}.
\]
Therefore,  in view of Claim $3$ we have that $\alpha_n^{-1} \Graph \alpha^{-1} $, whence $\alpha_n \Graph \alpha $.
  \end{proof}

\section{\bf Setup and preliminary results} \label{s:mainresults}
\noindent
Before stating our working assumptions in Sec.\ \ref{s:4.2}, %and the main results of this paper in Sec.\ \ref{s:4.3},
in the upcoming Sec.\ \ref{s:4.1} we recall all the basic definitions, and  tools of
measure theory and  convex analysis, which we will use in the following.
%%%%%%%%%%%%%
%Hereafter, we will work with a time-dependent energy functional $\Ec :[0,T] \times X \to (-\infty,
%\infty]$ and denote by $\frsubname :[0,T] \times X \rightrightarrows X^*$
%its \emph{Fr\'echet subdifferential}, defined at $(t,u) \in \mathrm{dom}(\Ec)$ by
%\begin{equation}
%\label{frsub-def}
%\xi \in \frsub tu \quad \text{ if and only if } \ene tv \geq \ene tu +\scalx{\xi,v-u}+ o(\|v-u\|_X) \quad
%\text{as $v \to u$.}
%\end{equation}
\subsection{Preliminaries of measure theory,  $\BV$ functions, and convex analysis}
\label{s:4.1}
%%%%%%%%
%%%%%%
\noindent We start with the notion of measure with values in a Banach space $\mathsf{X}$, which later on will either coincide with
the reflexive space $X$, or with $\R$.
\begin{definition}[Vector measure]
 Let $(\Omega,\Sigma)$ be a measurable space. % and $X$ be a Banach space.
  A function $\mu:\Sigma \rightarrow \mathsf{X}$ is called a (Banach-space valued) vector measure, if
 \begin{equation}
  \forall \left(A_i\right)_{i\in\Nn}, A_i\in \Sigma
\text{ with }
  \left( i\neq j \Rightarrow A_i \cap A_j = \emptyset \right) \quad \text{it holds} \quad \mu \left(\bigcup_{i\in\Nn} A_i\right) = \sum_{i=1}^{\infty} \mu\left(A_i\right)\,.  \label{eq:bvalmeas-sigma-add}
 \end{equation}
Here the convergence of the series on the right-hand side has to be understood in terms of the norm of  $\mathsf{X}$.
\end{definition}

\begin{definition}[Variation of a measure] Let $(\Omega, \Sigma)$ be a measurable space %$X$ a Banach space
 and $\mu:\Sigma\rightarrow \mathsf{X}$ a vector measure. Then the variation of $\mu$, denoted by $\|\mu\|:\Sigma \rightarrow [0,\infty]$, is given by
\begin{equation}
 \|\mu\|(A) := \sup\left\{\sum_{i=1}^\infty \left\|\mu(A_i)\right\|\, : (A_i)_{i\in\Nn} \subset \Sigma\,,\ \bigcup_{i\in\Nn}A_i = A\,, \ \forall i\neq j: A_i\cap A_j = \emptyset \right\}
\end{equation}
for all $A\in \Sigma$. If $\|\mu\|(\Omega) < \infty$ then we say $\mu$ is of bounded variation.% We denote by $\Mm(\Omega; X)$ the space
%of $X$-valued measures on $\Omega$ with bounded variation.
%of
\end{definition}
\noindent Indeed, $\|\mu\|$ itself is a (positive) measure on $(\Omega,\Sigma)$, see \cite[Prop.\ 9, p.\ 3]{DU77}.
  %We
 %now recall the definition of absolute continuity of two measures.

\begin{definition}[Absolute continuity and singularity of measures]
\label{def:ac-singularity-meas}
 Let $(\Omega,\Sigma)$ be a measurable space,
  $\mu:\Sigma \rightarrow \mathsf{X}$  a vector measure, and $\nu:\Sigma \rightarrow [0,\infty]$ be a (real-valued, positive)
   measure. We say that $\mu$ is absolutely continuous w.r.t.\ to $\nu$, and write $\mu \ll \nu$, if
\begin{equation}
 \forall A \in \Sigma: \Big[ \ \nu(A) = 0 \quad \Longrightarrow \quad \mu(A) = 0 \ \Big]\,.
\end{equation}
Moreover, we say that two real-valued, positive measures $\mu$ and $\nu$ are singular,
and write $\mu \perp \nu$,
 if there exist $B_1, B_2 \in \Sigma$ with
$B_1\cup B_2 = \Omega$ and $B_1\cap B_2 = \emptyset$ such that
\begin{equation}
 \forall A\in \Sigma: \quad \quad \mu(A) =\mu\left(A\cap B_1\right) \quad \text{and} \quad \nu(A) = \nu\left(A\cap B_2\right)
\end{equation}
\end{definition}
\noindent We recall the following  generalization of the Lebesgue decomposition theorem, see e.g.\
 \cite[Thm.\ 9, p.\ 31]{DU77}. %\begin{nr} check! \end{nr}
\begin{theorem}[Lebesgue decomposition theorem]\label{thm:Leb-decomp}
 Let $(\Omega, \Sigma)$ be a measure space, $\sigma$ be a Banach space-valued measure of bounded variation and $\lambda$ a real valued, positive measure.
 Then there exist two unique vector measures $\sigma_{\mathrm{ac}}$, $\sigma_\sing$ on $(\Omega,\Sigma)$, which are of bounded variation, such that
\begin{equation}
\|\sigma_{\mathrm{ac}}\| \ll \lambda\,, \ \|\sigma_\sing\| \perp \lambda
\quad \text{and} \quad  \sigma = \sigma_{\mathrm{ac}} +\sigma_\sing \,.
\end{equation}
\end{theorem}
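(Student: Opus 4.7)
The plan is to reduce the vector-valued Lebesgue decomposition to the classical scalar version, exploiting that the variation $\|\sigma\|$ is a finite positive measure on $(\Omega,\Sigma)$ by the standing hypothesis and the remark following the definition of variation. First I would apply the classical (real-valued) Lebesgue decomposition theorem to the finite positive measure $\|\sigma\|$ with respect to $\lambda$: this provides a partition $\Omega = B_1 \cup B_2$ into disjoint measurable sets with $\lambda(B_2)=0$, such that $A \mapsto \|\sigma\|(A \cap B_1)$ is absolutely continuous with respect to $\lambda$, while $A \mapsto \|\sigma\|(A \cap B_2)$ is singular to $\lambda$.

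Next I would simply declare
\[
\sigma_\ac(A) := \sigma(A \cap B_1), \qquad \sigma_\sing(A) := \sigma(A \cap B_2) \qquad \text{for every } A \in \Sigma.
\]
Both maps are $\sigma$-additive (as $\sigma$ composed with intersection by a fixed set), take values in $\mathsf{X}$, and their sum equals $\sigma$ because $\{B_1,B_2\}$ partitions $\Omega$. The crucial verification is the identity
\[
\|\sigma_\ac\|(A) = \|\sigma\|(A\cap B_1) \quad\text{and}\quad \|\sigma_\sing\|(A) = \|\sigma\|(A\cap B_2) \qquad \text{for every } A \in \Sigma,
\]
from which bounded variation, $\|\sigma_\ac\| \ll \lambda$, and $\|\sigma_\sing\| \perp \lambda$ all follow directly from the corresponding scalar properties of $\|\sigma\|(\cdot\cap B_1)$ and $\|\sigma\|(\cdot\cap B_2)$.

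For the uniqueness part, suppose $\sigma = \tau_\ac + \tau_\sing$ is another decomposition with the analogous properties. Then $\mu := \sigma_\ac - \tau_\ac = \tau_\sing - \sigma_\sing$ is a vector measure of bounded variation whose variation $\|\mu\|$ is simultaneously absolutely continuous and singular with respect to $\lambda$, by virtue of the subadditivity $\|\mu_1 + \mu_2\| \leq \|\mu_1\| + \|\mu_2\|$ (which is immediate from the definition of variation). Any positive measure enjoying both properties vanishes identically, whence $\sigma_\ac = \tau_\ac$ and $\sigma_\sing = \tau_\sing$.

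The step I expect to demand the most care is establishing the identity $\|\sigma_\ac\|(A) = \|\sigma\|(A \cap B_1)$: a priori the variation of $\sigma_\ac$ is defined via countable partitions of $A$ itself, so I would argue by refining any such partition $(A_i)$ via $A_i = (A_i \cap B_1) \sqcup (A_i \cap B_2)$, noting that the $B_2$-pieces contribute $\sigma_\ac(A_i \cap B_2) = 0$; conversely, any partition of $A \cap B_1$ extends trivially to a partition of $A$ with the same total norm-sum. Together with the countable additivity of $\|\sigma\|$ recorded as \cite[Prop.~9, p.~3]{DU77}, this transfers the scalar decomposition of $\|\sigma\|$ verbatim to the pair $(\sigma_\ac,\sigma_\sing)$.
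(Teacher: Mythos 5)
Your proof is correct, and it is essentially the argument of the reference the paper cites for this statement (\cite[Thm.~9, p.~31]{DU77}) rather than anything the paper proves itself: one applies the scalar Lebesgue decomposition to the finite positive measure $\|\sigma\|$, obtains a $\lambda$-null set $B_2$ carrying its singular part, restricts $\sigma$ to $B_1$ and $B_2$, and checks $\|\sigma(\cdot\cap B_i)\|=\|\sigma\|(\cdot\cap B_i)$ by refining partitions exactly as you describe. The only point worth recording is that the scalar decomposition of the \emph{finite} measure $\|\sigma\|$ with respect to $\lambda$ does not require $\sigma$-finiteness of $\lambda$ (take $B_2$ realizing $\sup\{\|\sigma\|(A):\lambda(A)=0\}$), so no hidden hypothesis is being used.
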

\paragraph{\bf   $\BV$  functions.} We fix here some definitions and notation concerning
$\BV$-functions  on $[0,T]$ with values in a Banach space $\mathsf{X}$, referring e.g.\  to \cite{moreau88} for a comprehensive
introduction to the topic.
We denote by $\BV ([0,T];\mathsf{X})$ the space of the measurable, pointwise defined at every
time $t \in [0,T]$,
 functions $v :[0,T] \to \mathsf{X}$ such that their \emph{pointwise} total variation on $[0,T]$ is finite, i.e.\
 \[
 \mathrm{Var} (v; [0,T]) = \sup \left\{ \sum_{m=1}^M \| v(t_m) - v(t_{m-1})\| \, : \ 0 =t_0 < t_1 < \ldots < t_{M-1}<t_M=T \right\}<\infty\,.
 \]
 More in general, given a convex, lower semicontinuous, $1$-positively homogeneous %and nondegenerate
 functional $\psi: X \to [0,+\infty)$, we
 denote by
$ \mathrm{Var}_{\psi}$ the induced total variation, i.e.
\begin{equation}
\label{var-psi}
 \mathrm{Var}_\psi (v; [0,T]) = \sup \left\{ \sum_{m=1}^M \psi( v(t_m) - v(t_{m-1})) \, : \ 0 =t_0 < t_1 < \ldots < t_{M-1}<t_M=T \right\}<\infty\,.
 \end{equation}

It is well known that the distributional derivative $\dd v$ of a curve $v \in \BV (0,T;X)$ is a vector measure in $\Mm(0,T;\mathsf{X})$, where
\[
\Mm(0,T;\mathsf{X}) = \{ \text{ Radon vector measures $\mu : (0,T) \to \mathsf{X}$ with bounded variation } \},
\]
which we will endow with the weak$^*$-topology.
\begin{notation}
\upshape
Let $u \in \BV ([0,T];X)$.
 Applying Thm.\ \ref{thm:Leb-decomp} with the choices
 $\sigma = \dd u$ and $\lambda=\Ll $ (where $\Ll$ denotes the one-dimensional Lebesgue measure on $[0,T]$), we find that there
  exist  vector measures
  $(\dd u)_{\mathrm{ac}},\, (\dd u)_\sing \in \Mm(0,T;X)$  such that
 \begin{equation}
  \|(\dd u)_{\mathrm{ac}}\| \ll \Ll,  \quad   \|(\dd u)_\sing\| \perp \Ll \ \ \text{and} \ \ \dd u
  = (\dd u)_{\mathrm{ac}} + (\dd u)_\sing\,. \label{decomp-lam}
 \end{equation}
 Thanks to the Radon-Nikod\'ym property of the \emph{reflexive} space $X$, the Radon-Nikod\'ym derivatives
 \begin{equation}
 \label{decomp-u}
  \dot{u}_{\mathrm{ac}}(t) := \frac{(\dd u)_{\mathrm{ac}}}{\dd \Ll}\,,\ \dot{u}_\sing(t) := \frac{(\dd u)_\sing}{ \|(\dd u)_\sing\|} \qquad \text{exist }\foraa\, t \in (0,T).
 \end{equation}

 For later use, we remark that
for any convex, lower semicontinuous, $1$-positively homogeneous $\psi: X \to [0,+\infty)$ there holds

\begin{equation}\label{wil-be-used-later}
\mathrm{Var}_\psi (u; [0,T])= \int_0^T \psi(\dot{u}_{\mathrm{ac}}(t)) \dd t + \int_0^T \psi (\dot{u}_\sing(t)) \|(\dd u)_\sing\| (t)\,.
\end{equation}
 \end{notation}

\paragraph{\bf The recession function.}
Finally, we recall the concept of \emph{recession function}  (see \cite[Chap.\ 4]{FL07}).  Note that the following definitions and
results, which  are stated in \cite{FL07} for convex functions on $\R^m$, in fact extend to an infinite-dimensional setting,
as it can be easily checked.
    \begin{definition}[Recession function]
\label{def-recession function}
 Let $\mathsf{X}$ be a vector space and $g: \mathsf{X} \rightarrow (-\infty, \infty]$ be a convex functional. Its recession function $g^{\infty}$ is defined as
\begin{equation}
\label{def:recession-function}
 g^\infty(z) := \sup\left\{g(y+z)-g(y): y\in D(g)\right\}\,.
\end{equation}
\end{definition}
\noindent
Trivially adapting the argument from  \cite[Thm.\ 4.70, p.\  290]{FL07}, it can be shown that
that $g^\infty$ is  positively homogeneous of degree $1$ and  convex. Moreover,
if $g$ is lower semicontinuous, so is $g^\infty$. Furthermore,  there holds
\begin{equation}
\label{representation-recession}
g^\infty (z) = \lim_{t \to \infty} \frac{g(y+tz)-g(y)}{t}= \sup_{t >0} \frac{g(y+tz)-g(y)}{t} \quad \text{for every } y \in D(g)\,.
\end{equation}

In what follows, we will denote by $f_\alpha^\infty$
the recession function of the Fitzpatrick function $f_\alpha$, viz.\
\begin{equation}
\label{recession-fitz}
f_\alpha^\infty(\xi,\xi^*)= \sup\left\{f_\alpha(\xi + x,\xi^*+x^*)-f_\alpha(x,x^*): (x,x^*)\in D(f_\alpha)\right\}\,.
\end{equation}
We now prove a useful representation formula for
$f_\alpha^\infty$, cf.\  \cite[Prop.\ 4.77, p.\ 294]{FL07}.
\begin{lemma}
\label{l:repre-recession}
There holds
\begin{equation}
\label{repre-recession}
f_\alpha^\infty(\xi,\xi^*)  = \sup\{   \scalx{\xi^*,  \xi_0 } + \scalx{  \xi_0^*, \xi}\, : \  (\xi_0,\xi_0^*)\in D(\rho_{\alpha^{-1}})\}  \quad \forall\,(\xi,\xi^*) \in X \times X^*\,.
\end{equation}
\end{lemma}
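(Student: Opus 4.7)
The starting observation is that formula \eqref{1dual-fitz-penot},
\[
f_\alpha(\xi,\xi^*) = \sup\Big\{\scalx{\xi^*,\xi_0} + \scalx{\xi_0^*,\xi} - \rho_{\alpha^{-1}}(\xi_0^*,\xi_0) \, : \ (\xi_0,\xi_0^*) \in X \times X^*\Big\},
\]
exhibits $f_\alpha$ as the Legendre--Fenchel conjugate of $\rho_{\alpha^{-1}}$ once one endows the reflexive product $X \times X^*$ with the natural duality with $X^* \times X$ under the pairing $\langle (\xi,\xi^*),(\eta,\zeta)\rangle := \pi(\zeta,\xi^*) + \pi(\xi,\eta)$. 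Since $\alpha$, and hence $\alpha^{-1}$, is maximal monotone, $\rho_{\alpha^{-1}}$ is, by its very construction \eqref{penot-formula}, a proper, convex and lower semicontinuous representative of $\alpha^{-1}$, so the conjugation is taken in a completely standard setting.

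With this identification in hand, the plan is to invoke the classical fact that, for any proper, convex, lower semicontinuous $g$ on a reflexive Banach space,
\[
(g^*)^\infty(y) = \sup\{\langle y, z\rangle \, : \ z \in D(g)\},
\]
which is exactly \cite[Prop.~4.77, p.~294]{FL07} in finite dimensions. Its proof rests solely on the representation \eqref{representation-recession} of the recession function applied to $g^*$ at an arbitrary reference point of $D(g^*)$, combined with bilinearity of the pairing, and therefore transfers verbatim to the present reflexive setting. Specialising to $g = \rho_{\alpha^{-1}}$ and expanding the pairing immediately delivers the right-hand side of \eqref{repre-recession}.

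A self-contained derivation can be obtained along the same lines: fix any $(x,x^*) \in D(f_\alpha)$ and compute $f_\alpha^\infty(\xi,\xi^*)$ through \eqref{representation-recession} by substituting \eqref{1dual-fitz-penot} into $\big[f_\alpha(x+t\xi,x^*+t\xi^*) - f_\alpha(x,x^*)\big]/t$; the integrand splits into a $t$-linear slope $\scalx{\xi^*,\xi_0} + \scalx{\xi_0^*,\xi}$ and an $O(1)$ remainder $\scalx{x^*,\xi_0} + \scalx{\xi_0^*,x} - \rho_{\alpha^{-1}}(\xi_0^*,\xi_0) - f_\alpha(x,x^*)$ which stays bounded above precisely when $(\xi_0^*,\xi_0) \in D(\rho_{\alpha^{-1}})$, so that the $t\to\infty$ asymptotics of the sup selects exactly the claimed supremum. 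I do not anticipate any substantial obstacle, since once \eqref{1dual-fitz-penot} is taken as input the argument is essentially algebraic; the only mild bookkeeping concerns the swap between $X \times X^*$ and $X^* \times X$ inside the duality pairing, which preserves bilinearity and hence the usual recession-of-conjugate identity.
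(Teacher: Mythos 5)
Your proposal is correct and follows essentially the same route as the paper: both arguments rest on reading \eqref{1dual-fitz-penot} as the statement that $f_\alpha$ is the Legendre--Fenchel conjugate of $\rho_{\alpha^{-1}}$ (up to the swap of arguments) and then applying the recession-of-a-conjugate identity of \cite[Prop.\ 4.77]{FL07} through the representation \eqref{representation-recession}, exactly as in the paper's proof. The only cosmetic remark is that the $O(1)$ remainder is in fact bounded above by $0$ for \emph{all} $(\xi_0,\xi_0^*)$ (by Fenchel--Young), and is finite precisely on $D(\rho_{\alpha^{-1}})$; this is what makes the exchange of suprema yield both inequalities, as your sketch correctly indicates.
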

\begin{proof}
Following
the proof of \cite[Prop.\ 4.77]{FL07},
from \eqref{representation-recession} and \eqref{1dual-fitz-penot} we infer
\[
\begin{aligned}
f_\alpha^\infty(\xi,\xi^*)  & = \sup_{t  >0} \frac{f_\alpha( x+t\xi, x^* +t\xi^*)  -f_\alpha(x,x^*) }{t}
\\ & \geq
\sup_{t > 0} \frac1t \Big(  t \scalx{\xi^*,\xi_0 } + t \scalx{\xi_0^*,\xi }
\\ & \quad \quad  \qquad
+ \scalx{\xi_0^*,x } + \scalx{x^*,\xi_0 } - \rho_{\alpha^{-1}}(\xi_0,\xi_0^*) - f_\alpha(x,x^*)    \Big)
\\
&
\geq \scalx{\xi^*,\xi_0 } +  \scalx{\xi_0^*,\xi }
\\ & \quad
 +\frac1t \left( \scalx{\xi_0^*,x } + \scalx{x^*,\xi_0 } - \rho_{\alpha^{-1}}(\xi_0,\xi_0^*) - f_\alpha(x,x^*)   \right)
\ \forall\,(\xi_0,\xi_0^*) \in X \times X^*, \ t >0\,.
\end{aligned}
\]
In view of \eqref{1dual-fitz-penot}, we thus conclude that
\[
f_\alpha^\infty(\xi,\xi^*)  \geq  \sup\{   \scalx{\xi^*,  \xi_0 } + \scalx{  \xi_0^*, \xi}\, : \  (\xi_0,\xi_0^*)\in D(\rho_{\alpha^{-1}})\} \,.
\]
The converse inequality may be proved arguing along the very same lines, cf.\ also  the proof of  \cite[Prop.\ 4.77]{FL07}.
\end{proof}
\noindent
As a direct consequence of Lemma \ref{l:repre-recession}, we have
  the following representation formula
 for the recession function of  $f_\alpha$, in the
case $\alpha$ is the subdifferential of a $1$-positively homogeneous
potential.
\begin{corollary}
Let $\psi:X\rightarrow \R$ be convex, lower semicontinuous and  positively
 homogeneous of degree $1$, and let $K^* \subset X^*$ be the associated polar set, cf.\ \eqref{eqns:repre-psi}. Then,
 \begin{equation}
 \label{recession-1-homog}
 f_{\partial \psi}^\infty (\xi,\xi^*)  =  \sup \{ \scalx{\xi_0^*,\xi} +  \scalx{\xi^*,\xi_0} \, : \ (\xi_0,\xi_0^*) \in X \times  K^*  \}\,.
 \end{equation}
\end{corollary}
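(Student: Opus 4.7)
The plan is to apply Lemma \ref{l:repre-recession} with $\alpha = \partial \psi$, and then to make the effective domain $D(\rho_{\alpha^{-1}})$ explicit using the $1$-homogeneity of $\psi$.

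First I would invoke Corollary \ref{cor:singleton}: since $\psi$ is convex, lower semicontinuous, and $1$-positively homogeneous,
\[
\rho_{(\partial\psi)^{-1}}(\xi_0,\xi_0^*) = \psi(\xi_0) + \psi^*(\xi_0^*),
\]
where I identify $(\partial\psi)^{-1}$ with $\partial \psi^*$ via the canonical correspondence between subsets of $X \times X^*$ and $X^* \times X$. Next, from \eqref{eqns:repre-psi-b} one has $\psi^* = I_{K^*}$, and by hypothesis $\psi$ is real-valued (hence finite) on all of $X$; therefore
\[
D(\rho_{(\partial\psi)^{-1}}) = \dom(\psi) \times \dom(\psi^*) = X \times K^*.
\]

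Plugging this domain description into the representation
\[
f_{\partial\psi}^\infty(\xi,\xi^*) = \sup\{\,\scalx{\xi^*,\xi_0} + \scalx{\xi_0^*,\xi} \,:\, (\xi_0,\xi_0^*) \in D(\rho_{(\partial\psi)^{-1}})\,\}
\]
supplied by Lemma \ref{l:repre-recession} yields \eqref{recession-1-homog} at once.

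I do not anticipate a genuine obstacle: the only delicate point is bookkeeping around the identification $\rho_{\partial\psi^*} \leftrightarrow \rho_{(\partial\psi)^{-1}}$, i.e.\ keeping track of which argument lives in $X$ and which in $X^*$. Once that convention is fixed consistently with the one used in the derivation of Corollary \ref{cor:singleton} and of Lemma \ref{l:repre-recession}, the corollary follows by pure substitution, with no estimates and no additional convex-analysis machinery.
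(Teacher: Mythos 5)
Your proposal is correct and coincides with the paper's own argument: the corollary is obtained by substituting $\rho_{(\partial\psi)^{-1}}=\rho_{\partial\psi^*}=\psi+\psi^*=\psi+I_{K^*}$ (from Corollary \ref{cor:singleton} and \eqref{eqns:repre-psi-b}) into the representation formula \eqref{repre-recession} of Lemma \ref{l:repre-recession}, using $D(\psi)=X$. The bookkeeping issue you flag about the ordering of arguments in $\rho_{\alpha^{-1}}$ is real but harmless, exactly as you say.
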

%\begin{nr} nella \eqref{recession-1-homog}, vale l'uguaglianza??
%dipende se vale l'uguaglianza nella formula piu' sotto, ma non sono
%in grado di dimostrarla: non possiamo applicare il Corollario
%\ref{cor:singleton} in quanto $\psi^*=I_{K^*}$ non e' $1$-omogeneo..
%\end{nr}
\begin{proof}
Formula \eqref{recession-1-homog} follows from \eqref{repre-recession}, taking into account that
\[
\rho_{\alpha^{-1}}= \rho_{\partial \psi^*}  = \psi + \psi^* =
\psi+ I_{K^*},
\]
and that $D(\psi)=X$ by assumption.
\end{proof}

\subsection{\bf Basic assumptions}
\label{s:4.2}
\noindent
In what follows,
we will suppose that
 \begin{equation}
 \label{X-reflexive}
\text{$X$ is a reflexive Banach space}
 \end{equation}
 and that
\begin{equation}
 \label{gen-max-monot}
 \tag{\ref{s:mainresults}.$\alpha_0$}
\alpha:X\rightrightarrows \Xs \text{ is a maximal monotone operator with  }   0 \in \alpha(0).
 \end{equation}
As for the energy functional $\cE$,  along the lines of  \cite{MRS11x}
 we require the following  \emph{coercivity} and \emph{regularity} type conditions.  Recall that $\frsubname$ denotes the \emph{Fr\'echet subdifferential} of the map
 $u \mapsto \Ec_t(u)$, cf.\ \eqref{frsub-def}.
 %%%%%%%
 %%%%%%%
\begin{assum}[Assumptions on the energy] \label{ener}
 We assume that the pair  $(\Ec,\frsubname)$ has the following properties:\\
\begin{itemize}
\item[\textbf{Lower semicontinuity:}] The domain of $\Ec$ is of the form  $D(\Ec) = [0,T]\times D$
for some $D \subset X$,
 and $\frsubname:[0,T]\times D \rightrightarrows X^*$. Furthermore, we ask that
\begin{equation}
 \label{cond:E0} \tag{$\ref{s:mainresults}.\Ec_0$} \begin{array}{c}
u\mapsto \Ec_t(u) \ \text{is l.s.c. for all}\ t\in[0,T],\ \ \exists C_0>0: \ \forall (t,u)\in[0,T]\times D: \Ec_t(u)\geq C_0
\ \text{and}\\ \graph(\frsubname)\ \text{is a Borel set of}\ [0,T]\times X\times X^*\,.\end{array}
\end{equation}
 \item[\textbf{Coercivity:}]
 Set $\cg(u):= \sup_{t \in [0,T]} \ene tu$ for every $u \in D$.
 We require that
\begin{equation}
  \label{cond:E1} \tag{$\ref{s:mainresults}.\Ec_1$}
u\mapsto \cg(u) \ \text{\normalfont{has compact sublevels.}}
\end{equation}
 \item[\textbf{Time-differentiability:}] For any $u\in D$ the map $t\mapsto \Ec_t(u)$ is differentiable with derivative $\partial_t \Ec_t(u)$ and it holds
\begin{equation}
 \label{cond:E2} \tag{$\ref{s:mainresults}.\Ec_2$} \exists C_1 >0: \forall u \in D: \ \ \left| \partial_t \Ec_t(u)\right| \leq C_1 \Ec_t (u)
\end{equation}
 \item[\textbf{Weak closedness:}] For all $t\in[0,T]$ and for all sequences $(u_n)_{n\in\Nn} \subset X$, $\xi_n \in \diff t{u_n}$, $E_n = \Ec_t(u_n)$ and $p_n=\partial_t \ene{t}{u_n}$ with
\[
 u_n \rightarrow u\ \text{in}\ X, \quad \xi_n \wconv \xi\ \text{in}\ X^*, \quad   %\partial_t \ene{t}{u_n}
 p_n \rightarrow p, \ \text{ and } \ {E}_n \rightarrow {E}\ \text{in}\ \Rr
\]
 it holds
\begin{equation}
 \label{cond:E5} \tag{$\ref{s:mainresults}.\Ec_3$} (t,u) \in D(\frsubname), \ \xi \in \diff tu,\ p\leq \partial_t \ene tu\ \text{ and } \ E=\Ec_t(u)\,.
\end{equation}
\end{itemize}
\end{assum}
\begin{remark}
\upshape
\label{rmk:observation}
In fact,  %is equivalent to requiring that
%$\cE$ is bounded from below:
 up to a translation, we may always suppose that the
constant involved in \eqref{cond:E0}  is strictly positive.
As in \cite{MRS11x}, combining  \eqref{cond:E2} with the Gronwall Lemma
we observe  that
\begin{equation}
\label{consequence-of-gronwall}
\exists\, C >0 \qquad \forall\, (t,u) \in [0,T]\times D \qquad  \cg(u) \leq C \inf_{t \in [0,T]} \Ec_t(u).
\end{equation}
\end{remark}
\medskip

\noindent
Later on, Assumption \ref{ener}
 will be  complemented by a suitable version of the chain rule for $\cE$, cf.\ Assumption \ref{ass:chain-rule}
 below.
 %%%%%
 %%%%%%
 %$\alpha:X\rightrightarrows \Xs$ be a maximal monotone operator, and let us
 %consider the doubly nonlinear differential inclusion
%featuring the Fr\'echet subdifferential  \eqref{frsub-def}
% $\frsubname: [0,T] \times X \rightrightarrows \Xs$
%of the energy $\cE :[0,T] \times X \to (-\infty,
%+\infty] $.
 As already mentioned, in order to investigate the stability properties of the doubly nonlinear
 equation
 \begin{equation}
 \label{eq:dne} \alpha(\dot u(t)) + \diff {t}{u(t)} \ni 0 \text{ in } X^*\quad \text{for a.a.}\ t\in(0,T)\,,
\end{equation}
  under graph convergence of $\alpha$,
 it is essential to resort to the Fitzpatrick function $f_\alpha$
 associated with $\alpha$.
 In the following lines,
 we will
 therefore shed light on  how
 \eqref{eq:dne} can be  in fact reformulated in terms of an \emph{energy identity} (cf.\ \eqref{eq:defsol}
 below) featuring $f_\alpha$.
At first, we will  confine the discussion to the
  case of \emph{absolutely continuous} solutions $u$ to
 \eqref{eq:dne}.
%%%%%
%%%%%
 \paragraph{\bf Reformulations of \eqref{eq:dne} in the \emph{absolutely continuous} case.}
 Preliminarily, let us precisely define
  what we understand by an \emph{absolutely continuous} solution to \eqref{eq:dne}.
  \begin{definition}[Absolutely continuous solution]
  \label{def:solutions-AC_case}
  In the framework of \eqref{X-reflexive}, \eqref{gen-max-monot}, and \eqref{cond:E0}, we say that
 a
 curve $u \in W^{1,1}(0,T;X) $ is a  solution to \eqref{eq:dne}, if there exists
  $\xi \in  L^1(0,T;X^*)$  with
 \begin{equation}
 \label{def-sol-touple}
 \xi(t) \in ({-}\alpha(\dot u (t))) \cap \diff t {u(t)} \qquad \foraa\, t \in (0,T)\,.
 \end{equation}
  \end{definition}
 \noindent In what follows, with a slight abuse of notation we will
  sometimes say that $(u,\xi)$ is a solution to \eqref{eq:dne}, meaning that \eqref{def-sol-touple} holds.

In Proposition \ref{prop:sol}, we reformulate
\eqref{def-sol-touple} by means of an energy identity involving the Fitzpatrick function
$f_\alpha$.
In the proof, a key role is played by the chain-rule condition \eqref{ch-rule-AC} below on the energy $\cE$,
whereas note that not all of the conditions collected in Assumption \ref{ener} are needed.
%%%%%
\begin{proposition}[Variational reformulation]
\label{prop:sol}
In the framework of \eqref{X-reflexive},
let $\alpha: X \rightrightarrows \Xs$ fulfill \eqref{gen-max-monot}
and suppose that $\cE: [0,T] \times X \to (-\infty,+\infty]$ complies with
\eqref{cond:E0},
\eqref{cond:E1},
\eqref{cond:E2},
 and the following chain rule: for every
$u\in W^{1,1}(0,T;X)$ and $\xi\in L^1(0,T;X^*)$ such that
\[
\begin{gathered}
    \sup_{t\in[0,T]} \Ec_t (u(t)) < \infty,  \ \ \xi(t)\in \diff t{u(t)} \ \text{for a.a.} \ t\in(0,T)\,,
    \ \
 \int_0^T f_\alpha(\dot u(t),-\xi(t)) \dd t <\infty, % \ \ \int_0^T |\partial_t \ene t{u(t)}| \dd t <\infty\,,
\end{gathered}
\]
(observe that, thanks to \eqref{cond:E2},  the first of the conditions above guarantees
$\int_0^T |\partial_t \ene t{u(t)}| \dd t <\infty$ as well),
there holds
\begin{equation}
\label{ch-rule-AC}
\begin{gathered}
\text{the map } t \mapsto \ene t{u(t)} \text{ is absolutely continuous and}
\\
\frac{\dd}{\dd t} \ene t{u(t)} =\scalx{\xi(t),\dot u(t)} +\partial_t \ene t{u(t)} \qquad \foraa\, t \in (0,T).
\end{gathered}
\end{equation}
Then, the following implications hold:
\begin{enumerate}
\item if  $(u,\xi) \in W^{1,1}(0,T;X) \times L^1(0,T;X^*)$ fulfills
 the energy identity
\begin{equation}
 \Ec_t(u(t)) + \int_{0}^t{f_{\alpha}\left(\dot{u}(s),-\xi(s)\right) \dd s} = \Ec_0(u(0)) + \int_0^t{\partial_t \Ec_s(u(s))\dd s} \qquad \text{for all } t \in (0,T],
  \label{eq:defsol}
\end{equation}
then $(u,\xi) $ is a solution to \eqref{eq:dne} in the sense of
Def.\ \ref{def:solutions-AC_case}.
\item every solution $(u,\xi)$ to \eqref{eq:dne} (in the sense of Def.\ \ref{def:solutions-AC_case}) fulfilling
\begin{equation}
\label{e:technical-condition}
\sup_{t\in[0,T]} \Ec_t (u(t)) < \infty, \qquad
\int_0^T |\scalx{\xi(t),\dot u (t)}| \dd t <\infty,
\end{equation}
complies  in addition with the energy identity \eqref{eq:defsol}.
\end{enumerate}
\end{proposition}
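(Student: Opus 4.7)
The plan is to prove the two implications in sequence, both leveraging the characterization of the graph of $\alpha$ through the Fitzpatrick function, namely the equivalence \eqref{alpha-associated-with}.

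For implication (2), I would start from a solution $(u,\xi)$ in the sense of Definition \ref{def:solutions-AC_case}. Since $-\xi(t)\in\alpha(\dot u(t))$ for almost every $t$, the Fitzpatrick representative property gives
\[
 f_\alpha(\dot u(t),-\xi(t)) \;=\; \pi(\dot u(t),-\xi(t)) \;=\; -\scalx{\xi(t),\dot u(t)}
\]
for a.e.\ $t$. Together with the integrability condition \eqref{e:technical-condition}, this shows $\int_0^T f_\alpha(\dot u,-\xi)\dd t<\infty$. The hypotheses of the chain rule \eqref{ch-rule-AC} are therefore satisfied; integrating $\frac{\dd}{\dd t}\ene t{u(t)} = \scalx{\xi(t),\dot u(t)}+\partial_t \ene t{u(t)}$ on $[0,t]$ and substituting $\scalx{\xi,\dot u}=-f_\alpha(\dot u,-\xi)$ immediately yields \eqref{eq:defsol}.

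For implication (1), I first need to verify that the hypotheses of the chain rule can be invoked, since $\sup_t \ene t{u(t)}<\infty$ is not directly assumed. Here \eqref{to-be-observed-later} is crucial: because $0\in\alpha(0)$ by \eqref{gen-max-monot}, the integrand $f_\alpha$ is non-negative. Hence \eqref{eq:defsol} together with \eqref{cond:E2} gives
\[
 \ene t{u(t)} \;\leq\; \ene 0{u(0)} + C_1\int_0^t \ene s{u(s)}\dd s,
\]
and Gronwall's lemma delivers the required bound $\sup_t \ene t{u(t)}<\infty$, whence also $\int_0^T f_\alpha(\dot u,-\xi)\dd t<\infty$. (I read the hypothesis of (1) as implicitly including $\xi(t)\in\diff t{u(t)}$ for a.a.\ $t$, since otherwise neither the chain rule nor the statement would make sense.) Having earned the right to apply \eqref{ch-rule-AC}, I subtract the chain-rule identity from \eqref{eq:defsol} to obtain
\[
 \int_0^t \bigl[\, f_\alpha(\dot u(s),-\xi(s))+\scalx{\xi(s),\dot u(s)}\,\bigr]\dd s \;=\; 0 \qquad \text{for every } t\in[0,T].
\]
Since $f_\alpha(\cdot,\cdot)\geq \pi(\cdot,\cdot)$ pointwise, the integrand is non-negative, so it must vanish for a.e.\ $s$. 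By the characterization \eqref{alpha-associated-with}, this forces $-\xi(s)\in\alpha(\dot u(s))$ for a.a.\ $s\in(0,T)$, which combined with $\xi(s)\in\diff s{u(s)}$ is precisely \eqref{def-sol-touple}.

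The only substantive obstacle is the Gronwall-type bootstrap used in (1) to upgrade \eqref{eq:defsol} into the hypothesis $\sup_t \ene t{u(t)}<\infty$ required by the chain rule; this rests squarely on the non-negativity of $f_\alpha$ enforced by $0\in\alpha(0)$. Once this is secured, both directions reduce to rewriting terms via the Fitzpatrick equality $f_\alpha=\pi$ on $\mathrm{graph}(\alpha)$, so the structural content of the proposition is entirely packed into the representative property \eqref{alpha-associated-with} and the chain rule \eqref{ch-rule-AC}.
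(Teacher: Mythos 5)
Your proposal is correct and follows essentially the same route as the paper: non-negativity of $f_\alpha$ (from $0\in\alpha(0)$) plus \eqref{cond:E2} and Gronwall to secure $\sup_t\ene t{u(t)}<\infty$ and the integrability needed for the chain rule, then comparison of \eqref{eq:defsol} with the integrated chain rule and the representative property $f_\alpha=\pi$ on $\mathrm{graph}(\alpha)$ to conclude both implications. Your reading that part (1) implicitly assumes $\xi(t)\in\diff t{u(t)}$ a.e.\ is the one the paper also adopts.
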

\noindent
Observe that, for every solution  $(u,\xi)$ to \eqref{eq:dne},  since $-\xi \in \alpha (\dot u)$ a.e.\
in $(0,T)$ and $0 \in \alpha (0)$,  we have $\langle -\xi,\dot u\rangle \geq 0$ a.e.\
in $(0,T)$. Hence
the second of \eqref{e:technical-condition}
in fact reduces to  $ \int_0^T \langle -\xi,\dot u\rangle \dd t <\infty$.
%%%%%%%%
\begin{proof}
 Let $(u,\xi)$ fulfill
 \eqref{eq:defsol}.
 Taking into account that $f_\alpha(\dot u,-\xi) \geq 0$ a.e.\ in $(0,T)$ thanks to \eqref{to-be-observed-later},
 and exploiting \eqref{cond:E2} we gather
 \begin{equation}
 \Ec_t(u(t)) \leq  \Ec_0(u(0)) +  C_1 \int_0^t \Ec_s(u(s))\dd s \qquad \text{for all } t \in (0,T],
  \label{gronwall-estimate}
\end{equation}
whence $\sup_{t \in [0,T]} \ene t {u(t)}<\infty$. Therefore, a fortiori \eqref{eq:defsol} yields
$f_\alpha (\dot u,-\xi) \in L^1 (0,T)$. Hence the pair $(u,\xi)$ fulfills the
conditions
for the chain rule \eqref{ch-rule-AC}, which
yields for all $t \in (0,T]$
\begin{equation}
\label{recalled-later}
\begin{aligned}
 \int_0^t{f_{\alpha}\left(\dot{u}(s),-\xi(s)\right) \dd s} & \leq  \Ec_0(u(0)) - \Ec_t(u(t)) + \int_0^t {\partial_t \ene s{u(s)}\dd s} \\
& \leq  \int_0^T{\scalx{-\xi(s),\dot u(t)}\dd s}\,.
\end{aligned}
\end{equation}
Using that $f_\alpha$ represents $\alpha$, it is immediate to deduce from the above inequality that
 $-\xi(t) \in \alpha\left(\dot u(t)\right)$ for almost all $t\in(0,T)$, thus
 $(u,\xi)$ is  a solution to \eqref{eq:dne} in the sense of Def.\ \ref{def:solutions-AC_case}.

 Conversely, let  $(u,\xi) \in  W^{1,1}(0,T;X) \times L^1(0,T;X^*)$ be a solution
 to \eqref{eq:dne} (in the sense of Def.\ \ref{def:solutions-AC_case})
 fulfilling in addition \eqref{e:technical-condition}. Then, since
 $f_\alpha (\dot u,-\xi) = \scalx{-\xi,\dot u}$, the chain rule
 \eqref{ch-rule-AC} applies, yielding, for all
$t\in[0,T]$, the energy identity
\[
\begin{aligned}
 \int_0^t{f_{\alpha}\left(\dot{u}(s),-\xi(s)\right) \dd s} & = \int_0^t{\scalx{-\xi(s),\dot u(t)}\dd s} \\
 & =\Ec_0(u(0)) - \Ec_t(u(t)) + \int_0^T {\partial_t \Ec_s (u(s)),\xi(s))\dd s} \,.
\end{aligned}
\]
\end{proof}
\begin{remark}
\upshape
\label{rmk:towards-ineqs}
A few comments on Proposition \ref{prop:sol} are in order.
\begin{enumerate}
\item It is not difficult to check that in Proposition \ref{prop:sol}
the Fitzpatrick function $f_\alpha$ could be replaced by \emph{any} representative functional for $\alpha$.
\item
Observe that, in the chain of inequalities \eqref{recalled-later} leading to
the proof of part (1) of Proposition \ref{prop:sol},   it is  in principle necessary for
\eqref{eq:defsol} and for the chain rule \eqref{ch-rule-AC} to hold as inequalities, only.
The proof of part (2)  requires \eqref{ch-rule-AC} to hold as an
equality, instead.
\end{enumerate}
\end{remark}
%%%
\section{\bf Main results}
\label{s:4.3-new}
\noindent  Before stating Thm.\ \ref{thm:main-ref}, let us precise  our hypothesis on
the sequence $(\alpha_n)$ of maximal monotone operators.
\begin{assum}\label{assum:alpha-ref}
    Let  $\alpha_n: X\rightrightarrows \Xs$
 fulfill \eqref{gen-max-monot}
 for all $n\in \Nn$ and
    \begin{equation}
    \tag{\ref{s:mainresults}.$\alpha_1$}
    \begin{aligned}
  \exists\, c_1,c_2,c_3>0,  \quad p\geq 1, \quad  q>1
  \qquad  & \forall\, n \in \N
\quad  \forall  (x,y)\in \alpha_n\,: \\ &  \scalx{y,x} \geq c_1\|x\|^p+ c_2\|y\|_*^q - c_3\,.
 \end{aligned}
 \label{eq:alpha_0}
    \end{equation}
Furthermore, there exists $\alpha: X\rightrightarrows \Xs$ fulfilling
\eqref{gen-max-monot} such that
$\alpha_n\Graph \alpha$.
\end{assum}
%In Section \ref{s:examples} we will show that the sequence of operators
%$\alpha_n = \partial \psi_n$, with
%\[
%\text{
%    $\psi_n(x)= 1/{p_n} \| x \|^{p_n}$  \ and  \ $(p_n) \subset (1,+\infty) $, \  $p_n \downarrow 1$ as $n \to \infty$,}
  %  \]
  %   complies with
%Assumption \ref{assum:alpha-ref}.
\begin{remark}
\upshape
\label{straight-conse}
Combining \eqref{eq:alpha_0} with the graph convergence of $(\alpha_n)$ to $\alpha$, it is immediate to conclude %that
\begin{equation}
\label{coerc-alpha}
  \scalx{y,x} \geq c_1\|x\|^p+ c_2\|y\|_*^q - c_3 \qquad \text{for all }  (x,y)\in \alpha\,.
\end{equation}
\end{remark}
\noindent
The following example guarantees that
our analysis encompasses the $p_n \to 1$ vanishing-viscosity limit.
\begin{example}
\label{ex:van-visc-lim}
\upshape
Let   $(p_n) \subset [1,+\infty)$
fulfill
$p_n \downarrow 1$ as $n \to \infty$, and let us set
\[
\psi_n(x) = \frac1{p_n} \| x\|^{p_n}, \qquad \alpha_n= \partial \psi_n: X \rightrightarrows X^*\,.
\]
Clearly, $(\psi_n)$ Mosco-converges to $\psi(x) = \| x \|$, hence
$(\alpha_n)$ converges in the  sense of graphs to $\alpha=\partial
\psi$. Observe that $\psi_n^*(y) = \frac1{q_n} \| y\|_*^{q_n}$ with
$q_n= p_n/ (p_n-1) \in [2,\infty]$ for all $n \in \N$, and that
\[
\scalx{y,x}= \frac1{p_n} \| x\|^{p_n} +  \frac1{q_n} \| y\|_*^{q_n}= \| x\|^{p_n} =  \| y\|_*^{q_n}
\quad \text{for all } (x,y) \in \alpha_n\,.
\]
Therefore, Assumption \ref{assum:alpha-ref}
 is  satisfied.
\end{example}

%\begin{nr}
%add  examples, comments on \eqref{eq:alpha_0}, in particular relate it to viscous approximation of rate-independent
%evolution..: in particular, \eqref{eq:alpha_0} is satisfied in the
%case  the limiting $\alpha$ is the $1$-dissipation potential
%$\Psi(v)= \| v\|_X$ (i.e. we restrict to rate-independent evolution in a
%reflexive space like in the paper by Mielke-Theil, and $\Psi_n(v)=.....$
%\end{nr}

The main result of this section addresses the passage to the limit
as $n \to \infty$ in the doubly nonlinear equations
\begin{equation}
 \label{eq:dne-n} \alpha_n(\dot u(t)) + \diff t{u(t)} \ni 0 \text{ in } X^*\quad \text{for a.a.}\ t\in(0,T)\,.
\end{equation}
In particular, we will assume to be given a sequence $(u_n)$ of
absolutely continuous solutions to \eqref{eq:dne-n}  and we will show
that, if the sequence $(\alpha_n)$
complies with Assumption \ref{assum:alpha-ref},
up to a subsequence $(u_n)$  converges to a curve $u$ fulfilling a
suitable generalized formulation of \eqref{eq:dne}.

 Observe that,
\eqref{eq:alpha_0} in principle only allows for a bound of the type
$\|\dot{u}_n\|_{L^1 (0,T;X)} \leq C$. That it why, we can only
expect a $\BV([0,T];X)$-regularity for the limiting curve $u$, and
\eqref{eq:dne} has to be  weakly formulated accordingly. This
will be done through an energy \emph{inequality} akin to
\eqref{eq:defsol}, cf.\ \eqref{eq-fitzpatrick-BV} below. Therein,
 suitable replacements of the
``time-derivative'' of $u$ are suitably handled in terms of the
Fitzpatrick function
  $f_\alpha$ and of its recession
function $f_\alpha^\infty$ (cf.\ Definition \ref{def-recession
function}), and of  the absolutely continuous and singular
parts of the Radon derivative $\dd u$ of $u$. Having in
mind the role of the chain rule \eqref{ch-rule-AC} relating
\eqref{eq:dne} and the energy identity \eqref{eq:defsol}, it is to
be expected that a suitable \emph{$\BV$ version} of
\eqref{ch-rule-AC} will play a relevant role.
 We state it in the following:
\begin{assum}
\label{ass:chain-rule}
 Let $u\in \BV([0,T];X)$ and $\xi\in L^1(0,T;X^*)$
fulfill
\[
\begin{gathered}
    \sup_{t\in[0,T]} \Ec_t (u(t)) < \infty,  \ \ \xi(t)\in \diff t{u(t)} \ \text{for a.a.} \ t\in(0,T)\,,
    \ \
 \int_0^T f_\alpha(\dot u(t),-\xi(t)) \dd t <\infty, % \ \ \int_0^T |\partial_t \ene t{u(t)}| \dd t <\infty\,,
\end{gathered}
\]
 and suppose that
the map $ t\mapsto \Ec_t(u(t))$ is almost everywhere equal on
 $(0,T)$ to a function
 $E \in \BV ([0,T])$.
 Furthermore let $\dd u$ and  $\dd E$   denote the Radon derivatives of $u$ and $E$.

 Then,
%\begin{equation}
   % \tag{$\ref{s:mainresults}.\Ec_4$}
%\dd E \ll \dd u + \Ll|_{[0,T]}, \label{eq:chain-abscont}
%\end{equation}
 for almost all Lebesgue points $t_0$ of the absolutely continuous parts $\dot{u}_{\mathrm{ac}}$ and $\dot{E}_{\mathrm{ac}}$
 of $\dd u$ and $\dd E$
 there holds
\begin{equation}
    \tag{$\ref{s:mainresults}.\Ec_4$}\dot{E}_{\mathrm{ac}}(t_0) \geq \scalx{\xi(t_0),\dot{u}_{\mathrm{ac}}(t_0)}
     + \partial_t \Ec_{t_0}(u(t_0))
    \  \text{  for all } \xi(t_0) \in \diff {t_0}{u(t_0)}\,.
     \label{eq:chainrule}
\end{equation}
\end{assum}
Observe that, since $X$ has the Radon-Nikod\'ym property, the set of
Lebesgue points of $\dot{u}_{\mathrm{ac}}$ and
$\dot{E}_{\mathrm{ac}}$ has full Lebesgue measure in $(0,T)$.

As it
will be clear from the proof of Thm.\ \ref{thm:main-ref} below,
Assumption \ref{ass:chain-rule} does not only provide a motivation
for the energy inequality \eqref{eq-fitzpatrick-BV}, but it also has a key role in the
proof of the passage to the limit as $n \to \infty$ in
\eqref{eq:dne-n}.
 \begin{theorem}\label{thm:main-ref}
  Assume \eqref{X-reflexive}.   Let $\alpha_n,\, \alpha : X \rightrightarrows X^*$ fulfill Assumption \emph{\ref{assum:alpha-ref}},
 and suppose that $\cE: [0,T] \times X \to (-\infty,+\infty]$ complies with Assumptions
 \emph{\ref{ener}} and \emph{\ref{ass:chain-rule}}.
 Let us consider a sequence $(u_0^n) \subset D$ of initial data  such that
 \begin{equation}
 \label{converg-initia-data}
 u_0^n \weakto u_0 \quad \text{in $X$,} \qquad
 \ene 0{u_0^n} \to \ene 0 {u_0},
 \end{equation}
 and let $(u_n,\xi_n) \subset W^{1,1}(0,T;X) \times L^1 (0,T;\Xs)$ be solutions  to
 \eqref{eq:dne-n} in the sense of Definition \ref{def:solutions-AC_case}, fulfilling
  the initial conditions
 $u_n(0) = u_0^n$. Suppose that, in addition, for all
 $n \in \N$ the functions $(u_n,\xi_n)$ comply with the energy identity
 \eqref{eq:defsol}.

  Then, there exist functions $u\in \BV([0,T];X)$ and $\xi\in L^q(0,T;X^*)$ (with
  $q>1$ from \eqref{eq:alpha_0})
  satisfying  $u(0)=u_0$, $\xi(t)\in \diff {t}{u(t)}$ for almost all $t\in (0,T)$, and such that up to a (not relabeled) subsequence
 \begin{equation}
  u_n(t)\rightarrow u(t) \ \forall t\in[0,T]\,, \ \dd u_n=(\dot{u}_n)_\ac\cdot\Ll|_{[0,T]} \wstar \dd u \in \Mm(0,T;X),
 \end{equation}
 and $(u,\xi)$ satisfies  the energy inequality
 \begin{equation}
 \label{eq-fitzpatrick-BV}
 \begin{aligned}
  \Ec_t(u(t)) &  +\int_0^t
   {f_{\alpha}\left(\dot{u}_{\mathrm{ac}}(s),-\xi(s)\right) }\dd s + \int_0^t f_{\alpha}^\infty\left(\dot{u}_\sing(s),0\right)\|(\dd u)_\sing\|(s)
   \\ &  \leq \Ec_0(u(0)) + \int_0^t \partial_t \Ec_s (u(s)) \dd s \qquad \text{for all }\, t \in [0,T],
    \end{aligned}
 \end{equation}
% cf.\ \begin{nr} formula to be quoted \end{nr}  for the definition of $(\dd u)_\sing$,
as well as
 \begin{equation}
 \label{interesting-eq}
  \xi(t) \in ({-}\alpha(\dot{u}_\ac (t))) \cap \diff t{ u(t)} \qquad \foraa\, t \in (0,T)\,.
 \end{equation}

Furthermore, there exists $E \in \BV ([0,T])$ such that
\begin{equation}
\label{relation-energies}
E(t)=\ene t{u(t)} \quad \foraa\, t \in (0,T), \qquad
E(t) \geq \ene t{u(t)} \quad \text{for all }t \in [0,T],
\end{equation}
and
we have the \emph{pointwise energy identity}
\begin{equation}
\label{pointiwse-enid}
\dot{E}_\ac (t)+ f_{\alpha}\left(\dot{u}_{\mathrm{ac}}(t),-\xi(t)\right)= \partial_t \ene t{u(t)}
\qquad \foraa\, t \in (0,T).\end{equation}
 \end{theorem}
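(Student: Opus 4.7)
The plan is to combine the variational reformulation of Proposition \ref{prop:sol} with the $\Gamma$-liminf inequality for Fitzpatrick representatives given by Theorem \ref{Attouch2}, supplemented by Young measure arguments (from the paper's Appendix B) to handle the BV-in-time setting. I proceed in four main steps.

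\textbf{Step 1 (A priori estimates and compactness).} Starting from the energy identity \eqref{eq:defsol} for $(u_n,\xi_n)$, the nonnegativity of $f_{\alpha_n}$ on account of \eqref{to-be-observed-later} combined with \eqref{cond:E2} and Gronwall's inequality yields a uniform bound on $\sup_{t\in[0,T]} \cE_t(u_n(t))$, which via \eqref{cond:E1} gives compact sublevels, and via \eqref{consequence-of-gronwall} a uniform lower bound. Plugging into \eqref{eq:defsol} we obtain $\int_0^T f_{\alpha_n}(\dot u_n,-\xi_n)\dd t\le C$. Invoking the coercivity \eqref{eq:alpha_0} together with $f_{\alpha_n}(\dot u_n,-\xi_n)\ge\langle -\xi_n,\dot u_n\rangle$, I then extract bounds
\[
\|\dot u_n\|_{L^p(0,T;X)}\le C,\qquad \|\xi_n\|_{L^q(0,T;X^*)}\le C.
\]
In particular $(u_n)$ is uniformly bounded in $\BV([0,T];X)$ with values in a compact set, so by a Helly-type selection theorem I extract a (non-relabelled) subsequence with $u_n(t)\to u(t)$ for every $t\in[0,T]$ and $\dd u_n\wstar \dd u$ in $\Mm(0,T;X)$, and a weakly convergent subsequence $\xi_n\weakto\xi$ in $L^q(0,T;X^*)$. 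Since the real-valued maps $t\mapsto \cE_t(u_n(t))$ are uniformly BV by \eqref{eq:defsol} and \eqref{cond:E2}, a second Helly argument produces $E\in\BV([0,T])$ with $\cE_t(u_n(t))\to E(t)$ pointwise.

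\textbf{Step 2 (Identification of $\xi$ and energy inequality).} The closedness condition \eqref{cond:E5} combined with the pointwise convergence $u_n(t)\to u(t)$ will yield, after a measurable selection / Young measure argument in the spirit of the Appendix, that $\xi(t)\in\partial\cE_t(u(t))$ a.e., together with $E(t)\ge \cE_t(u(t))$ pointwise and $E(t)=\cE_t(u(t))$ a.e. For the energy inequality, I pass to the $\liminf$ in \eqref{eq:defsol}. The right-hand side is handled by \eqref{cond:E2}, convergence of initial energies \eqref{converg-initia-data}, and dominated convergence for $\partial_t\cE_s(u_n(s))$ using the weak closedness. The crux is the $\liminf$ of $\int_0^t f_{\alpha_n}(\dot u_n,-\xi_n)\dd s$. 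Using a Young measure $\nu=\{\nu_t\}$ generated by $(\dot u_n,-\xi_n)$, one splits barycentre vs.\ singular part of $\dd u$ in the decomposition \eqref{decomp-u}, and applies the Reshetnyak-type lower semicontinuity together with the $\Gamma$-liminf inequality $f_\alpha\le\Gammaliminf f_{\alpha_n}$ from Theorem \ref{Attouch2} on the absolutely continuous part, and the analogous bound for the recession function $f_\alpha^\infty$ on the singular part (this uses \eqref{repre-recession} and the fact that $f_{\alpha_n}$ are equi-coercive by \eqref{eq:alpha_0}, so the recession directions are controlled). This yields \eqref{eq-fitzpatrick-BV}.

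\textbf{Step 3 (Identification of $\alpha$ and pointwise identity).} With $\xi(t)\in\partial\cE_t(u(t))$ in hand, I apply the BV chain rule \eqref{eq:chainrule} of Assumption \ref{ass:chain-rule} to the pair $(u,\xi)$ and the BV function $E$:
\[
\dot E_\ac(t)\ge \langle \xi(t),\dot u_\ac(t)\rangle + \partial_t\cE_t(u(t))\qquad \foraa\ t\in(0,T).
\]
Integrating and combining with \eqref{eq-fitzpatrick-BV} (using $E(t)=\cE_t(u(t))$ a.e.\ and $E(t)\ge\cE_t(u(t))$ pointwise together with $E(0)=\cE_0(u(0))$ from \eqref{converg-initia-data}), and noting that $f_\alpha^\infty(\dot u_\sing,0)\ge 0$ by \eqref{repre-recession} with $(0,0)\in D(\rho_{\alpha^{-1}})$, I obtain the chain of inequalities
\[
\int_0^t f_\alpha(\dot u_\ac,-\xi)\dd s\le \int_0^t\langle -\xi,\dot u_\ac\rangle \dd s.
\]
Since $f_\alpha$ represents $\alpha$, the reverse inequality $f_\alpha\ge\pi$ forces equality pointwise a.e., which by \eqref{alpha-associated-with} means $-\xi(t)\in\alpha(\dot u_\ac(t))$ for a.e.\ $t$. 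Forcing equality in the chain of inequalities also yields $f_\alpha^\infty(\dot u_\sing,0)=0$ $\|(\dd u)_\sing\|$-a.e., and the pointwise energy identity \eqref{pointiwse-enid} follows.

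\textbf{Main obstacle.} The most delicate point is the lower semicontinuity of the Fitzpatrick integral. One has to combine three facts simultaneously: (i) the $n$-dependent integrand $f_{\alpha_n}$ only $\Gamma$-liminf converges to $f_\alpha$ (no Mosco or pointwise convergence is assumed), (ii) the competitor pair $(\dot u_n,-\xi_n)$ converges only weakly (in fact $\dd u_n$ only in the sense of measures with a nontrivial singular part in the limit), and (iii) we need the correct recession-function contribution on the singular part. Reconciling these requires the Young-measure/Reshetnyak machinery of Appendix \ref{s:appendixB} adapted to moving integrands, which is precisely where the analysis departs from the classical viscous case of \cite{MRS11x}.
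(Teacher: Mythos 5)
Your overall architecture (a priori bounds, Helly/Young-measure compactness, lower semicontinuity of the Fitzpatrick integral, then the chain rule to force equalities) matches the paper's, but two steps that you flag or assert are not actually closed, and one of them would fail as written.

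First, the lower semicontinuity of $\int_0^t f_{\alpha_n}(\dot u_n,-\xi_n)\,\dd s$ --- which you correctly identify as the main obstacle --- is not resolved by the tools you invoke. A Young measure generated by the pair $(\dot u_n,-\xi_n)$ is not available: $\dot u_n$ is only bounded in $L^1(0,T;X)$ under \eqref{eq:alpha_0} with $p=1$, so the weak-tightness hypothesis of Theorem \ref{th:YM1} fails for that component (the concentration that escapes is precisely $(\dd u)_\sing$), and a ``Reshetnyak-type'' theorem for moving integrands is not in Appendix \ref{s:appendixB}. The paper's actual mechanism is different: it generates the Young measure only from $(\xi_n,\partial_t\cE_t(u_n))$, writes $f_{\alpha_n}$ through the duality formula \eqref{1dual-fitz-penot} as a supremum over test pairs $(x,x^*)$ of expressions that are \emph{affine} in $(\dot u_n,\xi_n)$, substitutes the recovery sequences $(x_k,x_k^*)$ with $\limsup_k\rho_{\alpha_k^{-1}}(x_k^*,x_k)\le\rho_{\alpha^{-1}}(x^*,x)$ provided by Theorem \ref{Attouch2}, passes to the limit using only $\dd u_n\wstar\dd u$ and $\xi_n\weakto\hat\xi$, and finally performs a blow-up at Lebesgue points of $\dd\mu_{\ac}/\dd\Ll$ and of $\dd\mu_\sing/\|(\dd u)_\sing\|$ (after decomposing the limit measure $\mu$ as $\mu_{\ac}+\mu_\sing+\mu_\perp$) to recover $f_\alpha$ on the absolutely continuous part and $f_\alpha^\infty$ on the singular part. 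Without this dualization-plus-blow-up device your Step 2 remains a statement of the problem rather than a proof.

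Second, your Step 3 as written does not close. Integrating the chain rule \eqref{eq:chainrule} and combining with the \emph{global} inequality \eqref{eq-fitzpatrick-BV} yields only
\begin{equation*}
\int_0^t f_\alpha(\dot u_{\ac},-\xi)\,\dd s \;\le\; \int_0^t\scalx{-\xi,\dot u_{\ac}}\,\dd s \;-\;(\dd E)_\sing([0,t]),
\end{equation*}
and since $\dd E=-\mu+\hat P\,\Ll$ with $\mu\ge 0$ one has $(\dd E)_\sing\le 0$, so the extra term sits on the wrong side and the integrated inequality is too weak to force $f_\alpha(\dot u_{\ac},-\xi)=\scalx{-\xi,\dot u_{\ac}}$ a.e. The paper avoids this by localizing: it passes to the limit in the exact identity $E(t)+\mu([s,t])=E(s)+\int_s^t\hat P$ on shrinking intervals around Lebesgue points of the absolutely continuous parts (where the singular parts have zero density), obtaining the pointwise inequality \eqref{clever1}, and then combines it with the Young-measure chain rule (Lemma \ref{YMchain}) and the measurable selection of Lemma \ref{lem:meas-select} to force all inequalities in \eqref{ineqs-then-eqs} to be equalities. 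Relatedly, your $\xi$ is the weak $L^q$ limit (the barycenter $\hat\xi$); this can be made to work via Jensen's inequality once the enhanced support property \eqref{enhanced-supp-prop} is known, but that property is itself a consequence of the localized argument you are missing, and one must also verify that the barycenter lies in the (convex, strongly closed, hence weakly closed) Fr\'echet subdifferential --- the paper sidesteps both points by selecting $\xi$ measurably from the argmin set $\mathcal S(t,u(t),\dot u_{\ac}(t))$.
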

 \begin{remark}
 \upshape
 In view of Proposition \ref{prop:sol},  a sufficient condition
  for functions $(u_n,\xi_n) $ solving
 \eqref{eq:dne-n} to comply with the energy identity \eqref{eq:defsol}, is that
 they fulfill
 \[
 \sup_{t \in (0,T)} \ene t{u_n(t)}<\infty \qquad \text{and} \qquad \scalx{-\xi_n,{\dot{u}_n}} \in L^1(0,T).
 \]
 This, provided that the \emph{absolutely continuous} version \eqref{ch-rule-AC} of the chain rule holds.

 In Section  \ref{ss:4.3-nr}, we will discuss some sufficient conditions on $\cE$ for both the chain rule \eqref{ch-rule-AC}
 and its $\BV$-version of Assumption \ref{ass:chain-rule} to hold.
 \end{remark}
% \begin{theorem}
%  Let the triplet $(\Ec,F,P)$ comply with Assumption \ref{ener}. Furthermore let $(\alpha_n)_{n\in\Nn}$ be a sequence of maximal monotone operators such that Assumption \ref{assum:alpha-ref} is satisfied. Let $(u_n,\xi_n)_{n\in\Nn} \subset W^{1,1}(0,T;X) \times L^1(0,T;X^*)$ be a family of functions such that $(u_n,\xi_n)$ solves the differential inclusion
% \begin{equation}
%  \alpha_n(\dot{u}_n(t)) + F_t(u_n(t)) \ni 0, u_n(0) = u_0
% \end{equation}
% in the sense of Definition \ref{def:sol}. Assume that $\alpha_n\Graph \alpha$. Then there exists a pair of functions $u\in \BV(0,T;X)$ and $\xi\in L^1(0,T;X^*)$ with $u(0)=u_0$, $\xi(t)\in F_t(u(t))$ for almost all $t\in [0,T]$ such that up to a subsequence
% \begin{equation}
%  u_n(t)\rightarrow u(t) \ \forall t\in[0,T]\,, \ \dot{u}_n\cdot\Ll|_{[0,T]} \wstar \dd u \in \Mm(0,T;X)
% \end{equation}
% Then $(u,\xi)$ satisfies for almost all $t\in[0,T]$ the energy inequality
% \begin{equation}
% \label{en-ineq}
%  \Ec_t(u(t)) +\int_0^t { f_{\alpha}\left(\dot{u}_{\mathrm{ac}}(s),-\xi(t)\right) }\dd s + \int_0^t f_{\alpha}^\infty\left(\dot{u}_s(s),0\right)\|(\dd \dot{u})_s\|   \leq \Ec_0(u(0)) + \int_0^t P_s(u(s),\xi(s)) \dd s\,
% \end{equation}
% \end{theorem}
\subsection{Further results}
\label{ss:4.2-nr}
We conclude this section with some results which
shed light on the interpretation of the energy identities \eqref{eq-fitzpatrick-BV} and
\eqref{pointiwse-enid} satisfied by the pair $(u,\xi)$. More precisely:
\begin{compactenum}
\item[-]
Proposition \ref{lemma:enid}  focuses on the case in which we have the additional information that $u$
is absolutely continuous.  For instance, this is granted whenever $u$ occurs as limiting curve of a sequence $(u_n) \subset
W^{1,1} (0,T:X)$ of solutions to the differential inclusions \eqref{eq:dne-n}, driven by operators $(\alpha_n)$ which
fulfill a stronger version of condition \eqref{eq:alpha_0},  cf.\  Thm.\  \ref{prop:thomas} ahead.
\item[-]
In Proposition \ref{lemma:rate-independent}  we address the special case in which $\alpha=\partial \psi$, with
$\psi: X \to [0,+\infty)$ a convex, lower semicontinuous, and $1$-homogeneous dissipation potential. We show that in this case
any $u \in \BV ([0,T];X)$ complying with the energy inequality \eqref{eq-fitzpatrick-BV} is a \emph{local solution} (cf.\ \cite{Mielke-et-al08, MRS12})
to the rate-independent system $(X,\cE,\psi)$.
\end{compactenum}
%%%%%%
\paragraph{\bf The absolutely continuous case.}
 Under a slightly stronger version of the chain rule of Assumption \ref{ass:chain-rule},
 Proposition \ref{lemma:enid}
 shows that,
if in addition we have that the curve $u $ is absolutely continuous on $(0,T)$, then
$f_{\alpha}^\infty\left(\dot{u}_\sing(t),0\right) =0$ for $\|(\dd u)_\sing\|$-a.a.\ $t \in (0,T)$,
and
\eqref{eq-fitzpatrick-BV} holds  on every sub-interval
$[s,t] \subset [0,T]$. Furthermore, the pair
$(u,\xi)$ solves \eqref{eq:dne} in the sense of Definition \ref{def:solutions-AC_case}, cf.\ \eqref{partial-eq}
below.
 %%%%%%%%%%%%%%%%
\begin{proposition}
\label{lemma:enid}
In the framework of \eqref{X-reflexive},
let $ \alpha : X \rightrightarrows X^*$ fulfill \eqref{gen-max-monot},
 and $\cE: [0,T] \times X \to (-\infty,+\infty]$ comply with Assumption
 \emph{\ref{ener}} and with the following chain rule:
 for every
$u\in W^{1,1}(0,T;X)$ and $\xi\in L^1(0,T;X^*)$ such that
\[
\begin{gathered}
    \sup_{t\in[0,T]} \Ec_t (u(t)) < \infty,  \ \ \xi(t)\in \diff t{u(t)} \ \text{for a.a.} \ t\in(0,T)\,,
    \ \
 \int_0^T f_\alpha(\dot u(t),-\xi(t)) \dd t <\infty, % \ \ \int_0^T |\partial_t \ene t{u(t)}| \dd t <\infty\,,
\end{gathered}
\]
then
\begin{equation}
\label{singzero}
(\dd u)_\sing  =0 \ \ \Rightarrow \ \  (\dd E)_\sing  =0
\end{equation}
 and the chain rule inequality \eqref{eq:chainrule}
 holds.

Let $(u,\xi, E) \in \BV ([0,T];X) \times L^1 (0,T;X^*) \times \BV ([0,T])$ fulfill
  \eqref{relation-energies} and \eqref{pointiwse-enid}.
  Suppose in addition that $u \in W^{1,1}(0,T;X)$.
   Then,
  \begin{equation}
  \label{absol-cont-E}
  E \in W^{1,1}(0,T).
  \end{equation}
 Furthermore, the pair
 $(u,\xi)$ fulfills
 \begin{equation}
 \label{partial-eq}
 -\xi(t) \in \alpha (\dot u (t)) \qquad \foraa\, t \in (0,T),
 \end{equation}
  and there holds the \emph{improved} energy inequality
  \begin{equation}
 \label{eq-fitzpatrick-ineq-better}
 \begin{aligned}
  \Ec_t(u(t))   &  +\int_s^t
   {f_{\alpha}\left(\dot{u}(r),-\xi(r)\right) }\dd r
     \\ & \leq  \Ec_s(u(s)) + \int_s^t \partial_t \Ec_r (u(r)) \dd r  \text{ for all }  t \in (0,T], \ \foraa\, s \in
     (0,t) \text{ and  for } s=0.
    \end{aligned}
 \end{equation}

 Finally, if $\cE$ also fulfills the enhanced chain rule
 \eqref{ch-rule-AC}, then
 \eqref{eq-fitzpatrick-ineq-better} holds as an \emph{equality} for every $0 \leq s \leq t \leq T$.
\end{proposition}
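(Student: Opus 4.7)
The plan is to unpack the three conclusions in turn; all of them hinge on combining the pointwise energy identity \eqref{pointiwse-enid} with the chain rule inequality \eqref{eq:chainrule} supplied by Assumption \ref{ass:chain-rule}. Since $u \in W^{1,1}(0,T;X)$ we have $(\dd u)_\sing = 0$, so the hypothesis \eqref{singzero} forces $(\dd E)_\sing = 0$; the Lebesgue decomposition \eqref{decomp-lam} of $\dd E$ then reduces to its absolutely continuous part, yielding $E \in W^{1,1}(0,T)$, i.e., \eqref{absol-cont-E}. For the inclusion \eqref{partial-eq}, I substitute \eqref{pointiwse-enid} into \eqref{eq:chainrule} to obtain, at almost every $t$,
\[
\partial_t \Ec_t(u(t)) - f_\alpha(\dot u(t), -\xi(t)) = \dot{E}_\ac(t) \geq \scalx{\xi(t), \dot u(t)} + \partial_t \Ec_t(u(t)),
\]
which simplifies to $f_\alpha(\dot u(t),-\xi(t)) \leq \pi(\dot u(t),-\xi(t))$. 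Since $f_\alpha$ is a representative of $\alpha$ and hence always dominates $\pi$, equality must hold, and the Fitzpatrick characterization \eqref{alpha-associated-with} yields $-\xi(t) \in \alpha(\dot u(t))$ for a.a.\ $t \in (0,T)$.

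With $E \in W^{1,1}(0,T)$ now in hand, I integrate the pointwise identity \eqref{pointiwse-enid} over any $[s,t] \subset [0,T]$ to obtain
\[
E(t) + \int_s^t f_\alpha(\dot u(r), -\xi(r)) \dd r = E(s) + \int_s^t \partial_t \Ec_r(u(r)) \dd r.
\]
Combining this with the pointwise bound $\Ec_t(u(t)) \leq E(t)$ and the a.e.\ identity $E(s) = \Ec_s(u(s))$ from \eqref{relation-energies} yields \eqref{eq-fitzpatrick-ineq-better} for almost every $s \in (0,t)$. For the boundary case $s = 0$ I need the extra information $E(0) = \Ec_0(u(0))$, which within the framework of Theorem \ref{thm:main-ref} follows from the well-preparedness \eqref{converg-initia-data} of the approximating initial data and the identification of $E(0)$ as the limit of $\Ec_0(u_0^n)$ in the construction.

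Finally, under the enhanced chain rule \eqref{ch-rule-AC}, the map $t \mapsto \Ec_t(u(t))$ is absolutely continuous, hence continuous; together with continuity of the $W^{1,1}$-function $E$ and their almost-everywhere coincidence from \eqref{relation-energies}, this upgrades $E(t) = \Ec_t(u(t))$ to a pointwise identity on $[0,T]$. Then \eqref{ch-rule-AC} gives $\dot{E}_\ac(t) = \scalx{\xi(t), \dot u(t)} + \partial_t \Ec_t(u(t))$ a.e., which compared with \eqref{pointiwse-enid} forces $f_\alpha(\dot u(t), -\xi(t)) = \pi(\dot u(t), -\xi(t))$ a.e., and integrating this identity over any $[s,t]$ upgrades \eqref{eq-fitzpatrick-ineq-better} to an equality on every subinterval. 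The most delicate step is the treatment of $s=0$ in the inequality case, since \eqref{relation-energies} a priori yields only $E(0) \geq \Ec_0(u(0))$; closing this gap requires the specific construction of $E$ as a pointwise limit of well-prepared initial energies afforded by Theorem \ref{thm:main-ref}, so that $E(0)$ genuinely coincides with $\Ec_0(u(0))$.
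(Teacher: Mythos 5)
Your proof is correct and follows essentially the same route as the paper's: deduce $E\in W^{1,1}$ from \eqref{singzero}, combine \eqref{pointiwse-enid} with \eqref{eq:chainrule} to get $f_\alpha(\dot u,-\xi)\le\scalx{-\xi,\dot u}$ and hence \eqref{partial-eq}, integrate \eqref{pointiwse-enid} and invoke \eqref{relation-energies} for the inequality, and use continuity plus a.e.\ coincidence to upgrade to an equality under \eqref{ch-rule-AC}. Your remark on the case $s=0$ is well taken: \eqref{relation-energies} alone only yields $E(0)\ge\Ec_0(u(0))$, which is the wrong direction, and the paper's terse ``we use \eqref{relation-energies}'' silently relies on the identification $E(0)=\Ec_0(u(0))$ coming from the construction in Theorem \ref{thm:main-ref} (cf.\ \eqref{arrayed-properties}), exactly as you point out.
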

\begin{proof}
Since $u \in W^{1,1}(0,T;X)$, its distributional derivative $\dd u$ has zero singular part, viz.\
$\dd u= \dot{u}_\ac \mathcal{L}$. Then, it follows from \eqref{singzero}
 that
  $\dd E = \dot{E}_\ac \mathcal{L}$, viz.\  $E$ is absolutely continuous.
  Therefore,
  \eqref{pointiwse-enid} becomes
  \begin{equation}
\label{pointiwse-enid-better}
\dot{E} (t)+ f_{\alpha}\left(\dot{u}(t),-\xi(t)\right)= \partial_t \ene t{u(t)}
\qquad \foraa\, t \in (0,T).\end{equation}
Now, combining this with the chain rule inequality \eqref{eq:chainrule}, we  conclude that
$f_{\alpha}\left(\dot{u}(t),-\xi(t)\right)\leq \scalx{-\xi(t),\dot u(t)}$ for almost all $t \in (0,T)$,
hence \eqref{partial-eq} holds.
Then,  to prove \eqref{eq-fitzpatrick-ineq-better}  we integrate  \eqref{pointiwse-enid-better}, thus
obtaining
\begin{equation}
\label{better-enid-E}
E(t)+ \int_s^t f_{\alpha}\left(\dot{u}(r),-\xi(r)\right) \dd r = E(s) +\int_s^t \partial_t \ene r{u(r)} \dd r
\qquad \text{for all } 0 \leq s \leq t \leq T,
\end{equation}
 and we use
\eqref{relation-energies}.

If moreover $\cE$ complies with the chain rule
 \eqref{ch-rule-AC}, then
 $E(t)= \ene t{u(t)}$ for all $t \in [0,T]$,
  since both functions $t \mapsto E(t)$ and $t \mapsto \ene t{u(t)}$ are continuous on
 $[0,T]$ and coincide on a set of full Lebesgue measure. Therefore
 from \eqref{better-enid-E}
  we get \eqref{eq-fitzpatrick-ineq-better}
for $t \mapsto \ene t{u(t)}$. This concludes the proof.
\end{proof}
As  a straightforward consequence of Prop.\ \ref{lemma:enid} we have the following result,
showing that, under a  stronger coercivity
 assumption on the sequence of maximal monotone operators
 $(\alpha_n)$  (cf.\ \eqref{eq:alpha_bis} below), any sequence $(u_n)$ of solutions  to \eqref{eq:dne-n}
 converges up to a subsequence to a curve complying with \eqref{absol-cont-E}--\eqref{eq-fitzpatrick-ineq-better}.
In particular,  observe that,  unlike in  \eqref{eq:alpha_0},
 in  \eqref{eq:alpha_bis} we do not allow the ``degenerate'' value $1$ for  exponent $p$. Indeed, Theorem
 \ref{prop:thomas} below for instance applies to a sequence of operators $\alpha_n = \partial \psi_n$, with
 $\psi_n(v)= 1/{p_n} \| v \|^{p_n}$   and  $p_n \downarrow p>1$ as $n \to \infty$. In this way, we obtain a stability result for
 doubly nonlinear differential inclusions driven by \emph{viscous}
 dissipation potentials, which
   generalizes the results in
  \cite[Thms.\ 3.1, 3.2]{Aizicovici-Yan}.
\begin{theorem}
\label{prop:thomas}
In the frame of \eqref{X-reflexive},
suppose that $\cE: [0,T] \times X \to (-\infty,+\infty]$ complies with
Assumptions \emph{\ref{ener}} and \emph{\ref{ass:chain-rule}}.  Let  $\alpha_n: X\rightrightarrows \Xs$
 fulfill \eqref{gen-max-monot}
 for all $n\in \Nn$ and
    \begin{equation}
    \begin{aligned}
  \exists\, c_1,c_2,c_3>0,  \quad p> 1, \quad  q>1
  \qquad  & \forall\, n \in \N
\quad  \forall  (x,y)\in \alpha_n\,: \\ &  \scalx{y,x} \geq c_1\|x\|^p+ c_2\|y\|_*^q - c_3\,.
 \end{aligned}
 \label{eq:alpha_bis}
    \end{equation}
Suppose that
there exists $\alpha: X\rightrightarrows \Xs$ fulfilling
\eqref{gen-max-monot} such that
$\alpha_n\Graph \alpha$.
 Let  $(u_0^n) \subset D$  be a sequence of initial data fulfilling
\eqref{converg-initia-data}
 and let $(u_n,\xi_n) \subset W^{1,1}(0,T;X) \times L^1 (0,T;\Xs)$ be solutions  to
 \eqref{eq:dne-n}, fulfilling
 $u_n(0) = u_0^n$ and
 \eqref{e:technical-condition} for every $n \in \N$.

Then, there exists $u\in W^{1,p}(0,T;X)$  with $u(0)=u_0$
 such that up to a (not relabeled) subsequence
 \begin{equation}
 \label{compactness}
  u_n(t)\rightarrow u(t) \ \text{ for all } t\in[0,T]\,,  \ \ u_n \weakto u \quad \text{in } W^{1,p}(0,T;X),
 \end{equation}
 and there exists  $\xi\in L^q(0,T;X^*)$
 such that the pair $(u,\xi)$ is a solution to \eqref{eq:dne} in the sense of Definition
 \ref{def:solutions-AC_case}, fulfilling the improved energy inequality \eqref{eq-fitzpatrick-ineq-better}.
\end{theorem}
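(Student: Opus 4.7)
The strategy is to bootstrap from Theorem \ref{thm:main-ref}, exploiting the stronger coercivity exponent $p>1$ in \eqref{eq:alpha_bis} to obtain $W^{1,p}$-regularity of the limit, and then to invoke Proposition \ref{lemma:enid} to upgrade the $\BV$ energy inequality \eqref{eq-fitzpatrick-BV} to its sub-interval form \eqref{eq-fitzpatrick-ineq-better}.

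First I would establish uniform a priori bounds. Since $(u_n,\xi_n)$ satisfies the energy identity \eqref{eq:defsol} and $-\xi_n(s)\in\alpha_n(\dot u_n(s))$ a.e.\ yields $f_{\alpha_n}(\dot u_n,-\xi_n)=\langle -\xi_n,\dot u_n\rangle$, condition \eqref{eq:alpha_bis} gives
\[
c_1\|\dot u_n(s)\|^p + c_2\|\xi_n(s)\|_*^q - c_3 \;\le\; f_{\alpha_n}(\dot u_n(s),-\xi_n(s)) \quad \text{for a.a.\ }s.
\]
Feeding this back into \eqref{eq:defsol} and combining with \eqref{cond:E0}--\eqref{cond:E2}, Gronwall's lemma, and \eqref{converg-initia-data} bounds $\sup_n \sup_{t\in[0,T]} \Ec_t(u_n(t))$ uniformly, hence
\[
\sup_n \bigl( \|\dot u_n\|_{L^p(0,T;X)} + \|\xi_n\|_{L^q(0,T;X^*)}\bigr) < \infty.
\]

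Second, since \eqref{eq:alpha_bis} is strictly stronger than \eqref{eq:alpha_0}, I apply Theorem \ref{thm:main-ref} to extract a (not relabelled) subsequence along which $u_n(t)\to u(t)$ for every $t$, $\xi_n\weakto\xi$ in $L^q(0,T;X^*)$, and the pair $(u,\xi)$ together with some $E\in\BV([0,T])$ satisfies \eqref{eq-fitzpatrick-BV}, \eqref{interesting-eq}, \eqref{relation-energies}, and the pointwise identity \eqref{pointiwse-enid}. The reflexivity of $L^p(0,T;X)$ (which requires $p>1$) and the uniform $L^p$-bound yield $\dot u_n\weakto v$ in $L^p(0,T;X)$ up to a further subsequence. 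Passing to the limit in $u_n(t)-u_n(0)=\int_0^t \dot u_n\,\dd s$ tested against elements of $X^*$ and using the pointwise convergence identifies $v=\dot u$, whence $u\in W^{1,p}(0,T;X)$ with $u_n\weakto u$ in $W^{1,p}(0,T;X)$, establishing \eqref{compactness}. In particular $(\dd u)_\sing=0$ and $\dot u_\ac=\dot u$.

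Third, with $u\in W^{1,1}(0,T;X)$ at hand I invoke Proposition \ref{lemma:enid} applied to the triple $(u,\xi,E)$: its conclusions directly supply $E\in W^{1,1}(0,T)$, the pointwise inclusion $-\xi(t)\in\alpha(\dot u(t))$ for a.a.\ $t\in(0,T)$ (so that $(u,\xi)$ is a solution to \eqref{eq:dne} in the sense of Definition \ref{def:solutions-AC_case}), and the improved energy inequality \eqref{eq-fitzpatrick-ineq-better} on every subinterval $[s,t]\subset[0,T]$. I expect the main obstacle to lie in the verification of the specific chain rule hypothesis underpinning Proposition \ref{lemma:enid}, namely the implication \eqref{singzero}: Assumption \ref{ass:chain-rule} alone guarantees only the chain inequality \eqref{eq:chainrule} for $\BV$ curves, and it is not automatic that the $\BV$ function $E$ arising in Theorem \ref{thm:main-ref} inherits absolute continuity from $u$. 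A viable route is to argue directly from \eqref{pointiwse-enid}: since $f_\alpha\ge 0$ (by \eqref{to-be-observed-later}) and $\partial_t\Ec_t(u(t))\in L^1$, the identity forces $\dot E_\ac\in L^1(0,T)$; combining this with \eqref{eq:chainrule} tested against the singular part of $\dd E$, with $(\dd u)_\sing=0$, and with the construction of $E$ as the liminf of the absolutely continuous functions $t\mapsto\Ec_t(u_n(t))$ in the proof of Theorem \ref{thm:main-ref}, one should rule out any non-trivial $(\dd E)_\sing$, thereby closing the application of Proposition \ref{lemma:enid}.
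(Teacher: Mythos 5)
Your proposal follows the paper's route: the strengthened coercivity \eqref{eq:alpha_bis} with $p>1$ upgrades the Step~1 estimates of the proof of Theorem \ref{thm:main-ref} to a uniform bound on $\|\dot u_n\|_{L^p(0,T;X)}$, reflexivity of $L^p(0,T;X)$ then gives $u\in W^{1,p}(0,T;X)$ and \eqref{compactness}, the remaining steps of that proof supply $(\xi,E)$ with \eqref{relation-energies}, \eqref{pointiwse-enid} and \eqref{interesting-eq} (which, since $\dot u_{\ac}=\dot u$, already makes $(u,\xi)$ a solution in the sense of Definition \ref{def:solutions-AC_case}), and Proposition \ref{lemma:enid} is invoked to conclude. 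You are also right to flag the one genuine subtlety: Proposition \ref{lemma:enid} lists the implication \eqref{singzero} among its hypotheses, whereas Theorem \ref{prop:thomas} only assumes Assumptions \ref{ener} and \ref{ass:chain-rule}; the paper passes over this silently (the intended sufficient condition is uniform subdifferentiability \eqref{unform-modulus}, cf.\ Proposition \ref{prop:sufficient-conditions}).

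Your proposed repair of that gap, however, does not work as written. A Helly (pointwise) limit of absolutely continuous functions is in general only $\BV$, so the construction of $E$ gives no control on $(\dd E)_{\sing}$; and neither \eqref{pointiwse-enid} nor the chain rule \eqref{eq:chainrule} says anything about the singular part of $\dd E$ — both involve only $\dot E_{\ac}$ — so there is nothing to ``test against $(\dd E)_\sing$''. The clean way out is to bypass \eqref{singzero} altogether and observe that the conclusion \eqref{eq-fitzpatrick-ineq-better} is asserted only for a.a.\ $s\in(0,t)$ and for $s=0$: from the subinterval energy balance \eqref{eq:lim-simp-a} of Step~4, $E(t)+\mu([s,t])=E(s)+\int_s^t\hat P(r)\dd r$, together with $\mu([s,t])\geq\int_s^t f_\alpha(\dot u_{\ac}(r),-\xi(r))\dd r$ (by \eqref{eq:lim-simp-b} and \eqref{interesting-byprod}), $\hat P\leq\partial_t\Ec_\cdot(u(\cdot))$ a.e., $E\geq\Ec_\cdot(u(\cdot))$ everywhere, and $E=\Ec_\cdot(u(\cdot))$ a.e.\ and at $s=0$ (by \eqref{converg-initia-data}), one reads off \eqref{eq-fitzpatrick-ineq-better} directly, with no need for $(\dd E)_\sing=0$. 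A last, minor, point shared with the paper: to pass from the assumed \eqref{e:technical-condition} to the energy identity \eqref{eq:defsol} for the approximants (which you take for granted at the outset) one uses Proposition \ref{prop:sol}(2), whose hypothesis is the absolutely continuous chain rule \eqref{ch-rule-AC}, again not among the assumptions explicitly stated in Theorem \ref{prop:thomas}.
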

\noindent
  The \emph{proof} is outlined at the end of Sec.\ \ref{s:proof-thm-1}.
%%%%%%
%%%%%%

  \paragraph{\bf The rate-independent case.}
  Let us now focus on the case in which
  \begin{equation}
\label{alpha-1-homog}
  \alpha = \partial \psi \text{ with  } \psi : X \to [0,+\infty) \text{ convex, lower semicontinuous and $1$-positively homogeneous}
  \end{equation}
  with associated polar set $K^* \subset X^*$.
 In this case, the energy inequality \eqref{eq-fitzpatrick-BV} rephrases in a more explicit way.
  %%%%%
  \begin{proposition}
 \label{lemma:rate-independent}
 Assume \eqref{X-reflexive}.
 Let $\alpha$ fulfill \eqref{alpha-1-homog} and let $(u,\xi) \in \BV (0,T;X) \times L^1 (0,T:X^*)$ satisfy the energy inequality
  \eqref{eq-fitzpatrick-BV}. Then, $(u,\xi)$ fulfill
   \begin{align}
   & \label{loc-stab}
   -\xi(t) \in K^* \quad \foraa\, t \in (0,T),\\
   & \label{rate-indepe-enineq}
   \ene t {u(t)}+  \mathrm{Var}_\psi (u; [0,t]) \leq   \ene 0 {u(0)} +\int_0^t \partial_t \ene s{u(s)} \dd s \quad \text{for all } t \in [0,T]\,,
   \end{align}
   with $ \mathrm{Var}_\psi$ from \eqref{var-psi}.
 \end{proposition}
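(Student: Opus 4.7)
The plan is to exploit the explicit representations of both the Fitzpatrick function and its recession function in the $1$-homogeneous case that were already established earlier in the paper, and then rewrite \eqref{eq-fitzpatrick-BV} term by term.

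First I would invoke Corollary \ref{cor:singleton} to rewrite, for a.a.\ $t$,
\[
f_{\alpha}(\dot u_\ac(t),-\xi(t)) \;=\; \psi(\dot u_\ac(t)) + \psi^*(-\xi(t)) \;=\; \psi(\dot u_\ac(t)) + I_{K^*}(-\xi(t)),
\]
where in the last step I use \eqref{eqns:repre-psi-b}. Since \eqref{eq-fitzpatrick-BV} is assumed to hold and its right-hand side is finite (Assumption \ref{ener} together with the Gronwall argument of Remark \ref{rmk:observation} yields that $\mathcal{E}_t(u(t))$ stays bounded, so in particular $\int_0^T|\partial_t\mathcal{E}_s(u(s))|\,ds<\infty$), the integral $\int_0^t f_\alpha(\dot u_\ac(s),-\xi(s))\,ds$ must be finite. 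Consequently $I_{K^*}(-\xi(s))=0$ for a.a.\ $s\in(0,T)$, which is exactly \eqref{loc-stab}, and the integrand reduces to $\psi(\dot u_\ac(s))$.

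Next I would handle the singular term. Using the representation \eqref{recession-1-homog} with $\xi^*=0$,
\[
f_{\alpha}^\infty(\dot u_\sing(t),0) \;=\; \sup\bigl\{\langle \xi_0^*,\dot u_\sing(t)\rangle : \xi_0^*\in K^*\bigr\} \;=\; \psi(\dot u_\sing(t)),
\]
where the second equality is \eqref{eqns:repre-psi-c}. Substituting both identified expressions into the energy inequality \eqref{eq-fitzpatrick-BV} gives
\[
\mathcal{E}_t(u(t)) + \int_0^t\psi(\dot u_\ac(s))\,ds + \int_0^t\psi(\dot u_\sing(s))\,\|(\mathrm{d}u)_\sing\|(s) \;\leq\; \mathcal{E}_0(u(0)) + \int_0^t\partial_t\mathcal{E}_s(u(s))\,ds.
\]
Finally, by the representation formula \eqref{wil-be-used-later} applied on $[0,t]$, the two integrals on the left-hand side coincide with $\mathrm{Var}_\psi(u;[0,t])$, yielding \eqref{rate-indepe-enineq}.

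The main obstacle is really only the first step: one must ensure that the finiteness of $\int_0^t f_\alpha(\dot u_\ac,-\xi)\,ds$ indeed forces $-\xi\in K^*$ a.e., which relies on the finiteness of the right-hand side of \eqref{eq-fitzpatrick-BV} and hence on using the coercivity/regularity from Assumption \ref{ener}. Once this is settled, the rest is a direct rewriting via the $1$-homogeneous formulae \eqref{to-be-labeleed} and \eqref{recession-1-homog}, which carry all the structural content of the argument.
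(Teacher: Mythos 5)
Your proposal is correct and follows essentially the same route as the paper's proof: rewrite $f_\alpha(\dot u_\ac,-\xi)$ as $\psi(\dot u_\ac)+I_{K^*}(-\xi)$ via the $1$-homogeneous identity of Proposition \ref{bella-prop}, bound $f_\alpha^\infty(\dot u_\sing,0)$ by $\psi(\dot u_\sing)$ via \eqref{recession-1-homog}--\eqref{eqns:repre-psi-c}, and recombine through \eqref{wil-be-used-later}. The only cosmetic differences are that you assert equality for the recession term where the paper only needs the inequality, and you make explicit the finiteness of the right-hand side that the paper leaves implicit in deducing \eqref{loc-stab}; neither affects the argument.
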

 \noindent
 In the frame of rate-independent evolution, \eqref{loc-stab} is interpreted as a \emph{local stability condition},
 while the energy inequality \eqref{rate-indepe-enineq} balances the stored energy $\ene t {u(t)}$ and the dissipated energy
 $\mathrm{Var}_\psi (u; [0,t])$, with the initial energy and the work of the external forces $\int_0^t \partial_t \ene s{u(s)} \dd s$.
In fact, the local stability \eqref{loc-stab} and the energy inequality \eqref{rate-indepe-enineq} yield (a slightly weaker version of) the notion of
  \emph{local solution} to the rate-independent system
  $(X,\cE,\psi)$ from
  \cite{Mielke-et-al08, MRS12}.
   Therein, it was observed that this  concept is the weakest among all notions
  of rate-independent evolution, in that it yields the least precise information on the behavior of the solution at jump points.
  On the other hand, \emph{local solutions} arise in the limit of a very broad class of approximations of rate-independent systems.
  This is in the same spirit as the stability results of this work. In particular, notice that the   maximal monotone operators  $\alpha_n$
  converging in the sense of graphs to $\alpha=\partial \psi$ need not be cyclically monotone.  An example in this direction in the plane $X = {\mathbb R}^2$ is given by the graphs $\alpha_n = \partial \psi + (1/n) Q$ where $Q$ is a rotation of $\pi/2$. In this case $\alpha_n \Graph \partial \psi $ but each $\alpha_n$ is noncyclic.
  \medskip

\noindent
We now proceed with the
\begin{proof}[Proof of Proposition \ref{lemma:rate-independent}.]
Let $(u,\xi) \in \BV (0,T;X) \times L^1 (0,T:X^*)$ fulfill
  \eqref{eq-fitzpatrick-BV}.
  Now, in view of  Proposition
  \ref{bella-prop} and of formula \eqref{eqns:repre-psi-b},
  we have
  \begin{equation}
  \label{1st-ingred}
    f_{\alpha}\left(\dot{u}_{\mathrm{ac}}(t),-\xi(t)\right)  = \psi(\dot{u}_{\mathrm{ac}}(t)) + \psi^* (-\xi(t)) = \psi(\dot{u}_{\mathrm{ac}}(t)) + I_{K^*}(-\xi(t))
    \quad
    \foraa\, t \in (0,T)\,.
    \end{equation}
Furthermore,
we have that
 \begin{equation}
  \label{2nd-ingred}
f_{\alpha}^\infty\left(\dot{u}_\sing(t),0\right) \geq \sup_{\xi_0^*
\in K^*} \scalx{\xi_0^*,\dot{u}_\sing(t)} = \psi (\dot{u}_\sing(t) )
\quad \text{for $\|(\dd u)_\sing\|$-a.a. } t \in (0,T)
\end{equation}
where the first inequality is due to \eqref{recession-1-homog} and
the second identity to \eqref{eqns:repre-psi-c}.

Then, taking into account  formula  \eqref{wil-be-used-later} for
$\mathrm{Var}_\psi $, \eqref{eq-fitzpatrick-BV} yields
\[
 \ene t {u(t)}+  \mathrm{Var}_\psi (u; [0,t]) + \int_0^t I_{K^*}(-\xi(s)) \dd s   \leq   \ene 0 {u(0)} +\int_0^t \partial_t \ene s{u(s)} \dd s \quad \text{for all } t \in [0,T],
\]
which is equivalent to \eqref{loc-stab}--\eqref{rate-indepe-enineq}.
\end{proof}
%\begin{remark}
%\upshape
%\label{rmk:weakened}
%Observe that, in  \eqref{eq:alpha_bis}
% the function
% could be weakened, by prescribing that there exists a convex functional $\varphi: X \to
%[0,+\infty)$
%with superlinear growth at infinity, such that
%\end{remark}

%  Moreover, there holds
%  \begin{equation}
%\label{identity-everywhere}
%E(t)= \ene t {u(t)}\quad \text{for all } t \in [0,T].
%\end{equation}
%To check this for every fixed $t \in [0,T]$, we observe that there
%exists a sequence
%$t_n \to t$ as $n \to \infty$, along which the first of \eqref{relation-energies} holds.
%Then,
%$E(t)= \lim_{n\to\infty} E(t_n)= \lim_{n \to \infty} \ene {t_n}{u(t_n)}= \ene t u(t)$,
%\end{proof}

%%%%%%%%%%
%%%%%%%%%%%%%%%%%%%%%%%%
%\section{\bf Examples and applications}
%\label{s:examples}
%%%%
%\paragraph{\bf A family $(\alpha_n)$ complying with Assumption \ref{assum:alpha-ref}.}
%%%%%%%%%%
%%%%%%%%%%%%%%%%%%%%%%%%
\subsection{Sufficient conditions for closedness and chain rule}
\label{ss:4.3-nr}
Following \cite{MRS11x}, we now show that conditions
of $\lambda$-convexity type on the energy functional $\cE$ ensure
the validity of the closedness property \eqref{cond:E5}, of the
chain rules \eqref{ch-rule-AC}, \eqref{eq:chainrule}, and of
property \eqref{singzero}.

More precisely, in  \cite[Sec.\ 2]{MRS11x} the following \emph{subdifferentiability property} was introduced.
\begin{definition}
\label{def-uniform-subdif}
Let $\cE: [0,T] \times X \to (-\infty,+\infty]$ fulfill \eqref{cond:E0}.
 For every $R>0$,  set
\[
D_{R} = \left\{ u \in D\, : \ \cg(u) \leq R
\right\}\,.
\]
We say
that
$\cE$ is \emph{uniformly subdifferentiable} (w.r.t.\ the variable $u$)
if
 for all $R>0$ there exists a \emph{modulus of
subdifferentiability} $\omega^R:[0,T]\times  D_{R} \times
D_{R}  \to [0,+\infty)$   such that for all $t \in [0,T]$:
\begin{equation}
\label{unform-modulus}
\begin{gathered}
 \text{$\omega_t^R(u,u)=0$ for every $u\in
D_R$,}
\\
  \text{the map $(t,u,v) \mapsto \omega_t^R(u,v)$ is upper
semicontinuous, and}
\\
  \ene tv- \ene tu - \la \xi,v-u\ra \geq
  -\omega^R_t(u,v) \|v-u \| \quad \text{for all $u,\,v \in D_R$
and $\xi \in \diff tu$.}
  \end{gathered}
\end{equation}
\end{definition}
\noindent
It was shown in \cite[Sec.\ 2]{MRS11x} that, a sufficient condition for \eqref{unform-modulus}
is that the map $u \mapsto \ene tu$ is
\emph{$\lambda$-convex}
uniformly in $t \in [0,T]$, namely
\begin{equation}
\label{e:V-convexity}
\begin{aligned}
\exists\, \lambda \in \R \,  \quad &  \forall\, t \in [0,T] \
\forall\,u_0,\, u_1 \in D\ \forall\, \theta \in [0,1]\, : \\
& \ene t{(1-\theta)u_0+\theta u_1}\le (1-\theta)\ene t{u_0}+
  \theta\ene t{u_1}-\frac\lambda2\theta(1-\theta)\|u_0-u_1\|^2\,.
  \end{aligned}
\end{equation}
Suitable  perturbations of $\lambda$-convex functionals also fulfill the closedness and the chain rule properties: we refer to \cite{RS06,MRS11x,RMS08}
for more details and explicit examples.

We have the following
\begin{proposition}
\label{prop:sufficient-conditions}
Let $\cE: [0,T] \times X \to (-\infty,+\infty]$ fulfill \eqref{cond:E0}, \eqref{cond:E2}, and the \emph{uniform subdifferentiability}
condition \eqref{unform-modulus}. Then,
$\cE$
complies with the closedness condition \eqref{cond:E5},  with the chain rules \eqref{ch-rule-AC} and \eqref{eq:chainrule}, and with
 property \eqref{singzero}.
 \end{proposition}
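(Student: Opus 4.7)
The plan is to follow the strategy of \cite{MRS11x}, where analogous closedness and chain-rule statements were established under the same uniform subdifferentiability hypothesis \eqref{unform-modulus}. The argument breaks into four essentially independent verifications: the closedness property \eqref{cond:E5}, the absolutely continuous chain rule \eqref{ch-rule-AC}, its $\BV$ enhancement \eqref{eq:chainrule}, and the implication \eqref{singzero}. All four exploit the fundamental ingredient provided by \eqref{unform-modulus}, namely that a Fr\'echet subgradient at $u$ satisfies a quantitative lower bound for $\ene tv - \ene tu$ with remainder controlled by the continuous modulus $\omega_t^R$.

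For the closedness \eqref{cond:E5}, first apply \eqref{unform-modulus} at $u_n$ with the test vector $v = u$:
\[
\ene tu \geq \ene t{u_n} + \scalx{\xi_n, u - u_n} - \omega_t^R(u_n, u) \, \| u - u_n \|.
\]
Passing to $\liminf$, using the upper semicontinuity of $\omega_t^R$ together with $\omega_t^R(u,u) = 0$, gives $\ene tu \geq E$, which combined with the lower semicontinuity of $\cE_t$ yields $E = \ene tu$. Writing \eqref{unform-modulus} at $u_n$ with arbitrary $v$ in a common sublevel $D_R$ and passing to $\liminf$ provides
\[
\ene tv \geq \ene tu + \scalx{\xi, v - u} - \omega_t^R(u, v) \|v - u\|,
\]
which, since $\omega_t^R(u, v) \to 0$ as $v \to u$, entails $\xi \in \frsub tu$. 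For the bound $p \leq \partial_t \ene tu$, I would use the integral identity $\ene{t+h}{u_n} - \ene t{u_n} = \int_t^{t+h} \partial_s \ene s{u_n} \dd s$, combined with the Gronwall-type control from \eqref{cond:E2} and the lower semicontinuity of $v \mapsto \ene{t+h}v$, to pass to the $\liminf$ in $n$ and then to the limit $h \to 0^+$.

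For the chain rule \eqref{ch-rule-AC}, the key step is, for $0 \le s < t$ and sufficiently large $R$,
\[
\ene s{u(t)} \geq \ene s{u(s)} + \scalx{\xi(s), u(t) - u(s)} - \omega_s^R(u(s), u(t))\|u(t) - u(s)\|,
\]
together with its symmetric counterpart obtained by interchanging $s$ and $t$. Combining these with the time-integral $\ene{t}{u(t)} - \ene{s}{u(t)} = \int_s^t \partial_\tau \ene\tau{u(t)} \dd \tau$ and the absolute continuity of $u$, standard Lebesgue-point arguments produce the absolute continuity of $t \mapsto \ene t{u(t)}$ and the chain-rule identity a.e.\ on $(0,T)$. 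The inequality \eqref{eq:chainrule} in the $\BV$ setting is then obtained by applying the same incremental estimate at a Lebesgue point $t_0$ of both $\dot{u}_\ac$ and $\dot{E}_\ac$: dividing by $h > 0$ and passing to $h \to 0^+$ yields $\dot{E}_\ac(t_0) \geq \scalx{\xi(t_0), \dot u_\ac(t_0)} + \partial_t \ene{t_0}{u(t_0)}$, the contribution of $\omega_{t_0}^R$ being negligible since $\omega(u(t_0), u(t_0 + h))\to 0$ as $h \to 0$. Property \eqref{singzero} is then an immediate by-product: if $(\dd u)_\sing = 0$ then $u \in W^{1,1}(0,T;X)$, and the absolutely continuous chain rule \eqref{ch-rule-AC} forces $t \mapsto \ene t{u(t)}$ to be absolutely continuous, so $(\dd E)_\sing = 0$.

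The main obstacle is the $\BV$ refinement \eqref{eq:chainrule}: here the function $E$ need only coincide almost everywhere with $t \mapsto \ene t{u(t)}$ and may have jumps that are not captured by the underlying energy along $u$, so the identification of $\dot{E}_\ac$ with the right-hand side at Lebesgue points requires a careful interplay between the singular and absolutely continuous parts of $\dd u$ and $\dd E$, together with the observation that jumps of $E$ correspond to jumps of $u$ through the coercivity \eqref{cond:E1} and the subdifferentiability inequality. The remaining verifications are essentially routine adaptations of \cite[Sec.~2]{MRS11x}.
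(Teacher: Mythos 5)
Your proposal is correct and follows exactly the route the paper takes: the paper's own proof simply cites \cite[Prop.\ 2.4]{MRS11x} for \eqref{cond:E5} and \eqref{ch-rule-AC} and states that \eqref{eq:chainrule} and \eqref{singzero} follow by trivially adapting those arguments, and your outline is a faithful reconstruction of precisely those arguments (liminf in the uniform subdifferentiability inequality for closedness, difference quotients plus the two-sided subgradient estimate for the chain rules, and the a.c.\ chain rule for \eqref{singzero}). No genuine gap; you have merely made explicit what the paper delegates to the reference.
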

\begin{proof} In \cite[Prop.\ 2.4]{MRS11x}, it was proved that condition \eqref{unform-modulus} implies  \eqref{cond:E5} and \eqref{ch-rule-AC}.
The validity of  \eqref{eq:chainrule} and \eqref{singzero} can be checked trivially adapting the arguments developed for
the proof of \cite[Prop.\ 2.4]{MRS11x}, to which the reader is referred.
\end{proof}
%%%%%%%%%%

\subsection{Examples of quasistatic limits}
\label{ss:4.3-ulisse} Our approach to the approximation of doubly
nonlinear evolution equations %is specifically tailored to the
 in particular allows us to discuss  quasistatic limits
of dynamical problems. Indeed, the %freedom in
 flexibility in the choice of the approximating graphs
$\alpha_n$,  possibly noncyclic monotone, makes it possible to take
rate-independent limits  of  Hamiltonian systems. We
shall provide here some examples of ODEs and PDEs that can be
reformulated within our frame.

Let us start by considering the case of a {\it nonlinearly damped oscillator}. In particular, let $q=q(t)$ represent the set of generalized coordinates of the system, $M$ be the mass matrix, and $U=U(q)$ its smooth and coercive potential energy. Assume moreover that the system dissipates energy in terms of a positively $1$-homogeneous and nondegenerate dissipation potential $D=D(\dot q)$.  By rescaling time $t$ as $  \epsi t  $, the quasistatic limit of the system corresponds to the limit as $\epsi \to 0$ in the equation
\begin{equation}
\epsi^2 M \ddot q + \partial D(\dot q) + \nabla U(q)\ni 0.\label{to_pass_to}
\end{equation}
The latter can be  rephrased  as a single doubly
nonlinear Hamiltonian system  in  the pair $v=(p,q)$, by
 introducing  the Hamiltonian
$H(p,q)=U(p)+q{\cdot}M^{-1}q/2$, the symplectic operator
$$J=
\left( \begin{matrix}
  0 & 1\\-1& 0
\end{matrix}
\right),
$$
and the dissipation potential $\widehat D(\dot p,\dot q)=D(\dot p)$.
 Then, \eqref{to_pass_to} reads
\begin{equation} \partial \widehat D (\dot p,\dot q)  +\epsi J(\dot p,\dot q) +\nabla H(p,q)\ni (0,0),\label{to_pass_to2}
\end{equation}
 which can be  equivalently rewritten as
\begin{align*}
 \partial D(\dot p) + \epsi \dot q + \nabla U(p) &\ni 0,\\
-\epsi \dot p + M^{-1}q&=0,
\end{align*}
Taking the quasistatic limit $\epsi \to 0$ in relation \eqref{to_pass_to2} requires to deal with the graphs
 $\alpha_\epsi= \partial \widehat D + \epsi J$, which are noncyclic monotone for all $\epsi>0$.
 Apart from the coercivity assumption \eqref{eq:alpha_0} (which can however be relaxed in this case),
  this situation fits into our theory. In particular, solution trajectories to the dynamic problem \eqref{to_pass_to} converge to solutions of the corresponding quasistatic limit. By generalizing the choice of the graphs $\alpha_\epsi$, convergence can be obtained for a large class of different approximating problems.

The nonlinear oscillator example can be turned into a first  PDE example by considering the nonlinearly damped semilinear wave equation
\begin{equation}
  \label{semi}
  \epsi^2 u_{tt} + \partial D(u_t) - \Delta u + f(u)=0.
\end{equation}
This is to be posed in the cylinder $\Omega \times (0,T)$ for some
smoothly bounded open set $\Omega \subset \R^n$, along with the
positively $1$-homogeneous and nondegenerate dissipation potential
$D$, the smooth and polynomially bounded function $f$, and suitable
initial and homogeneous Dirichlet boundary conditions (for
simplicity). Equation \eqref{semi} can be variationally reformulated
in terms of a first-order system as
\begin{equation}
  \label{semi2}
   \partial \mathcal D(u_t,v_t) + \epsi \mathcal J(u_t,v_t) + \partial  \mathcal H(u,v)\ni (0,0) \quad \text{in } \mathcal U^* {\times} \mathcal V^*
   \quad \foraa\, t \in (0,T),
\end{equation}
where  $\mathcal U = H^1_0(\Omega)$, $\mathcal V=L^2(\Omega)$, the functionals  $\mathcal D: \mathcal V^2 \to [0,\infty]$, and
$\mathcal H: \mathcal U{\times}\mathcal V \to (-\infty,\infty]$ are given by
$$ \mathcal D(u_t,v_t) =\int_\Omega D(u_t) \dd x, \quad \mathcal H(u,v) =\int_\Omega \left(\frac12 |\nabla u |^2 + \widehat f(u) + \frac12 |v|^2\right) \dd x$$
  for $\widehat f' = f$, and $\mathcal J(u_t,v_t)(x)= J(u_t(x),v_t(x))$ for almost every $x\in \Omega$. Equation \eqref{semi2} fits in our frame along with the choice $\alpha_\epsi = \partial \mathcal D + \epsi \mathcal J$, which are noncyclic monotone for all $\epsi >0$. In particular, owing to our analysis we can take the quasistatic limit $\epsi \to 0$ in the latter (again by suitably circumventing the lack of coercivity, which is inessential here).

Let us now provide a second PDE example by considering the
quasistatic limit in {\it linearized elastoplasticity} with linear
kinematic hardening \cite{hr}. We let $\Omega \subset \R^3$ be the
reference configuration of an elastoplastic body which is subject to
a displacement $u: \Omega \to \R^3$  and a {\it plastic strain} $p:
\Omega \to \Rzd$ (traceless or {\it deviatoric} symmetric
$3{\times}3$ tensors). Then, the evolution of the elastoplastic
medium is described by the system of the (time-rescaled) momentum
balance (in $ \R^3$) and constitutive equation (in $\Rzd$) as
\begin{align*}
  &\epsi^2 \rho u_{tt} - \nabla {\cdot} (\mathbb C(\epsi(u) {-}p))= b, \\
&\partial D(p_t) + \mathbb H p= \mathbb C(\epsi(u) {-}p)
\end{align*}
 in $\Omega \times (0,T)$,  where $\rho=\rho(x) $ stands
for the material density, $\mathbb C$ is the elasticity tensor
(symmetric, positive definite), $\epsi(u) = (\nabla u {+} \nabla
u^\top)/2$ is the symmetrized strain gradient, $b=b(t,x)$ denotes
some body force density, $\mathbb H$ is the hardening tensor, and
$D$ is a positively $1$-homogeneous and nondegenerate dissipation
potential. The choice $D(p_t)=R|p_t|$ for some $R>0$ corresponds to
the classical {\it Von Mises} plasticity. We shall close the latter
elastoplasticity system by imposing homogeneous Dirichlet conditions
on $u$ and no-traction conditions at the boundary (for simplicity).
Then, the system can be recast in the form of a first-order system
by augmenting the variables, including the momentum $v_t=\rho u_t$.
In particular, we can variationally reformulate the system as
\begin{equation}
  \label{semi3}
   \partial \mathcal D(u_t,v_t,p_t) + \epsi \mathcal J(u_t,v_t,p_t) + \partial
    \mathcal H(u,v,p)\ni (b,0,0) \quad \text{in } \mathcal U^* {\times} \mathcal V^*{\times} \mathcal P^*
    \quad \foraa\, t\in (0,T),
\end{equation}
where now the spaces are defined as  $\mathcal U = \{u \in H^1(\Omega;\R^3) :  u=0 \ \text{in} \  \partial \Omega\}$, $\mathcal V=L^2(\Omega;\R^3)$, $\mathcal P = L^2(\Omega;\Rzd)$. The functionals and the operator are given by
\begin{align*}
 \mathcal D(u_t,v_t,p_t) &=\int_\Omega D(p_t)\dd x \quad \forall p_t \in L^1(\Omega;\Rzd),\\
\mathcal H(u,v,p) &=\int_\Omega \left(\frac12  (\epsi(u) {-}p){:}\mathbb C (\epsi(u) {-}p) + \frac12 p{:} \mathbb H p + \frac{1}{2\rho} |v|^2\right) \dd x \quad \forall (u,v,p) \in \mathcal U {\times} \mathcal V{\times} \mathcal P, \\
 \mathcal J(u_t,v_t,p_t) &=
\left(
  \begin{matrix}
 v_t\\ - u_t \\ 0
  \end{matrix}
\right)\quad \forall (u,v,p) \in \mathcal U {\times} \mathcal V{\times} \mathcal P.
\end{align*}
Once again, the operators $\alpha_\epsi = \partial \mathcal D +
\epsi \mathcal J$ are noncyclic monotone for all $\epsi >0$.  In
particular, our analysis is suited in order to analyze the
quasistatic limit $\epsi \to 0$ in the elastoplastic system
\eqref{semi3}. This clearly distinguishes our frame from the former
variational principle from \cite{plas}, which is of no use in the
dynamical case.

%%%%%%%%%%%%%%%%%%%%%%%%
\section{\bf Proof of Theorem \ref{thm:main-ref}}
\label{s:proof-thm-1}
\paragraph{\bf Outline.}
Our starting point is the fact that,
 the functions $(u_n,\xi_n)$ fulfill for every $n \in \N$ the energy identity
\begin{equation}
\label{fitz-n}
\Ec_t(u_n(t)) + \int_{0}^t{f_{\alpha_n}\left(\dot{u}_n(s),-\xi_n(s)\right) \dd s} = \Ec_0(u_0^n) +
\int_0^t{\partial_t \Ec_s(u_n(s))\dd s} \qquad \text{for all } t \in (0,T].
\end{equation}
From \eqref{fitz-n}, we will  deduce a priori estimates on the sequence $(u_n,\xi_n)$. Relying on
 well-known strong and weak compactness results,
we will then prove the convergence (up to a subsequence)
 of $(u_n,\xi_n)$ to a limit pair $(u,\hat{\xi})$. Hence we will pass to  the limit as $n \to \infty$ in \eqref{fitz-n}, following the lines of the proof of
  \cite[Thm.\ 4.4]{MRS11x}. Namely,
 we will
 combine  the finite-dimensional lower semicontinuity theorem \cite[Theorem 5.27]{FL07},
  with  tools from infinite-dimensional
  Young measure theory
  (see Appendix \ref{s:appendixB} for some basic recaps),
  and  refined selection arguments mutuated from the proof of
  \cite[Thm.\ 4.4]{MRS11x}. Such arguments  will yield
  the existence of a function  $\xi \in L^1
  (0,T;X^*)$ such that the pair
  $(u,\xi)$ fulfill the energy inequality \eqref{eq-fitzpatrick-BV}.
%%%%%%%%%%%%
\begin{notation}
\upshape
\label{notation-for-constants}
Hereafter we will denote by the symbols
  $C, \, C'$ various positive constants,  which may change from line to line,
  only depending on known quantities and in particular
   independent of $n \in \N$. We will also use the place-holders
   \begin{equation}
   \label{place-holders-n}
   E_n(t):= \ene t{u_n(t)}, \qquad P_n(t):= \partial_t \ene t{u_n(t)}.
   \end{equation}
\end{notation}
%%%%%%%%%%%%%%%%%%%%%%%

 \noindent
 \textbf{Step 1 - A priori estimates and compactness:}
 It follows from \eqref{fitz-n}
 and \eqref{cond:E2}
 (cf.\  also estimate \eqref{gronwall-estimate}) that
$
E_n(t) \leq E_n(0) +C_1 \int_0^t E_n(s) \dd s
$ for all $t \in [0,T]$. Since $\sup_{n\in \N} E_n(0) \leq C$ by
\eqref{converg-initia-data}, applying the Gronwall Lemma
   we deduce
   $\sup_{t\in [0,T]}\{E_n(t):t\in[0,T]\} \leq C$.
    Therefore, in view of assumption
    \eqref{cond:E2} and property \eqref{consequence-of-gronwall}, we conclude that
    \begin{equation}
    \label{aprio-est1}
    \exists\, C>0
    \ \ \forall\, n \in \N\, : \quad \sup_{t \in [0,T]}\left( \cg(u_n(t))+ |P_n(t)| \right) \leq C.
    \end{equation}
 Thanks to \eqref{cond:E1} we then infer that
    \begin{equation}
    \label{compact}
    \exists\, K \Subset X \quad \forall\, n \in \N \ \forall\, t \in [0,T]\,: \quad
    u_n(t) \in K.
    \end{equation}
   Then,
   taking into account that $f_{\alpha_n} (\dot{u}_n,-{\xi}_n)  \geq 0$ a.e.\ in $(0,T)$ in view of \eqref{to-be-observed-later},
   \eqref{fitz-n} yields
   \begin{equation}
   \label{aprio-esti3}
 \exists\, C>0 \ \ \forall\, n \in \N\, : \qquad   \|f_{\alpha_n} (\dot{u}_n,-{\xi}_n) \|_{L^1(0,T)} \leq C.
   \end{equation}
    In view of  assumption
    \eqref{eq:alpha_0}, from \eqref{aprio-esti3} we conclude
    \[
    \int_0^T{ c_1\|\dot{u}_n(s)\| + c_2\|{\xi}_n(s)\|_*^q \dd s } \leq C.
    \] Also due to
    \eqref{compact}, we ultimately deduce that
 \begin{equation}
 \label{bounds-u-n}
 \exists\, C>0
    \ \ \forall\, n \in \N\, : \quad
    \| u_n \|_{\BV([0,T];X)} + \|{\xi}_n(s)\|_{L^q(0,T;X^*)} \leq C\,.
 \end{equation}
 Furthermore, from  the energy identity \eqref{fitz-n} we immediately infer that, setting
 $
  h_n(t):= E_n(t) - \int_0^t P_n(s) \dd s$,
  there holds
 \[
 h_n(t)-h_n(s)  =  -\int_{s}^t f_{\alpha_n}(\dot{u}_n(r),-{\xi}_n(r))\dd s \leq 0 \quad \forall\, 0 \leq s \leq t \leq T.
\]
 Therefore we have
 $\Var(h_n; [0,T]) = E_n(0) - E_n(T)) + \int_0^T P_n(s)\dd s \leq C $ thanks to
 \eqref{aprio-est1} and \eqref{converg-initia-data}.
 Since  $(P_n)$ is uniformly bounded in $L^{\infty}(0,T)$,  we conclude that
   \begin{equation}
   \label{aprio-est2}
   \exists\, C>0
    \ \ \forall\, n \in \N\, : \quad
    \Var(E_n; [0,T]) \leq C\,.
   \end{equation}

Estimates \eqref{compact}, \eqref{bounds-u-n}, \eqref{aprio-est2}, and
the Helly principle guarantee that there exists a subsequence $(n_k)$ and
functions
$u\in \BV([0,T];X)$ and  $E\in \BV([0,T])$
such that, as $k \to \infty$,
 \begin{align}
 &
 \label{pointwise-converg}
  \left(u_{n_k}(t),\Ec_t\left(u_{n_k}(t)\right)\right) \rightarrow (u(t),E(t)) \ \ \text{in} \ X \times \Rr \text{ for all $t \in [0,T]$,}
  \\
  &
  \label{measure-converg}
   \dd u_{n_k}= \dot{u}_{n_k} \cdot \Ll \wstar \dd u  \ \ \text{in} \ \Mm(0,T;X).
 \end{align}
 Exploiting Thm.\ \ref{thm:Leb-decomp}, we decompose $\dd u$ as
 \[
 \dd u = (\dd u)_{\mathrm{ac}}+ (\dd u)_\sing=  \dot{u}_{\mathrm{ac}}\, \Ll + \dot{u}_s \, \| (\dd u)_\sing\|.
 \]
 Observe that, by the lower semicontinuity \eqref{cond:E0},
 \begin{equation}
 \label{Egeqe}
 E(t) \geq \ene t{u(t)}
 \qquad \text{for all } t \in [0,T].
 \end{equation}
 % Due to a generalization of Lebesgue's decomposition theorem \cite[Theorem 9]{DU77} there
  %exist $\lambda_{\mathrm{ac}},\lambda_\sing \in \Mm(0,T;X)$ such that
 %\begin{equation}
  %\|\lambda_{\mathrm{ac}}\| \ll \Ll \ \ \text{and} \ \ \|\lambda_\sing\| \perp \Ll \ \ \text{and} \ \ \lambda
  %= \lambda_{\mathrm{ac}} +\lambda_\sing\,. \label{decomp-lam}
 %\end{equation}
 %Thanks to the Radon-Nikod\'ym property of $X$, the Radon-Nikod\'ym derivatives
 %\begin{equation}
 %\label{decomp-u}
 % \dot{u}_{\mathrm{ac}}(t) := \frac{\dd \lambda_{\mathrm{ac}}}{\dd \Ll}\,,\ \dot{u}_s(t) := \frac{\dd \lambda_\sing}{} \qquad \text{exist }\foraa\, t \in (0,T).
 %\end{equation}
 Further, in view of estimate
 \eqref{aprio-esti3}, there exists
 $\mu \in \Mm(0,T)$ such that (up to  not relabeled a subsequence)
 \begin{equation}
 \label{falpha-measure-conver}
  f_{\alpha_k}\left(\dot{u}_k(\cdot),-{\xi}_k(\cdot)\right)\cdot \Ll \wstar \mu \ \ \text{in} \ \Mm(0,T)
 \end{equation}
Moreover, by an infinite-dimensional version of the fundamental compactness theorem of Young measure theory
(cf.\ Thm.\ \ref{th:YM1} in Appendix \ref{s:appendixB}),
we can associate with (possibly a subsequence of) $(\xi_{n_k},
P_{n_k})$
 a  limiting Young measure $(\sigma_t)_{t\in(0,T)}\in \mathscr{Y} (0,T;X\times \Rr) $ %(cf.\ Appendix \ref{s:appendixB}),
 such that,
 for almost all $t\in(0,T)$
 it holds
  $\sigma_t (X \times \Rr) = 1$ and $\sigma_t$ is supported on the set of the limit points of
  $(\xi_{n_k}(t), P_{n_k}(t))$ w.r.t.\ the  weak topology on $X^*  \times \Rr$, viz.
  \begin{align}
  \label{support-property}
 \text{supp}\left( \sigma_t\right) \subset \bigcap_{j\in\Nn}
 \overline{\left\{\left(\xi_{n_k}(t), P_{n_k}(t)\right): k \geq j \right\}}^{\mathrm{weak}}\,
 \end{align}
 (where with $\overline{B}^{\mathrm{weak}}$ we denote the closure of a set $B \subset X^*  \times \Rr$ w.r.t.\ the weak topology).
  Furthermore, it holds
 \begin{align}
 &
  {\xi}_{n_k}\wconv \int_{X^*\times \Rr} \zeta \dd \sigma_t(\zeta,p) =: {\hat{\xi}} \ \ \ \ \text{in}\ L^q(0,T;X^*) \ \ \text{and}
   \label{weak-converg-xi}
   \\
   &
    P_{n_k} \wstar \int_{X^*\times \Rr} p \dd \sigma_t(\zeta,p) =: \hat{P} \ \ \ \ \text{in} \ L^{\infty}(0,T)\,.
   \label{converg-pi}
 \end{align}

\noindent
 \textbf{Step 2 - Nonemptyness of admissible sets:} From now on, for the sake of simplicity, we
  shall write $k$ instead of $n_k$ .
  There exists a negligible set $N \subset (0,T)$ such that for every $t \in (0,T)
 \setminus N$ convergences \eqref{pointwise-converg} and the support property \eqref{support-property}
 hold.
  Taking into account the closedness condition \eqref{cond:E5},
  it can be easily checked (cf.\ also \cite[Sec.\ 6]{MRS11x}), that
  for almost all $t \in (0,T) $ there holds
  %Choose $(\zeta,p) \in \text{supp}\left( \sigma_t\right)$. Then there exists a further subsequence $k_j$ such that $(\hat{\xi}_{k_j}(t), p_{k_j}(t)) \wconv (\zeta,p)$. Due to \eqref{cond:E5} we then obtain that for almost any $t\in[0,T]$ it holds
  \begin{equation}
  \label{arrayed-properties}
 \begin{array}{lll}
  (t,u(t)) & \in & D(\frsubname),\\
  \Ec_t(u(t)) & = & E(t), \qquad \Ec_0(u(0))= E(0), \\
  \text{supp}\left(\sigma_t\right) & \subset & \{(\zeta,p) \in X^* \times \Rr : \zeta \in \diff {t}{u(t)}\,, \ p\leq \partial_t \Ec_t(u(t))\}\,.
 \end{array}
 \end{equation}
 In particular, from \eqref{converg-pi} and the third of \eqref{arrayed-properties} it follows that
 \begin{equation}
 \label{p-ineq}
 \hat{P}(t) \leq \partial_t \ene t{u(t)} \qquad \foraa\, t \in (0,T)\,.
 \end{equation}

\noindent
 \textbf{Step 3 - $\liminf$ result for the Fitzpatrick function:} %Notice that since $f_{\alpha_k} \geq 0$ the map
 %\begin{equation*}
    %t\mapsto f_{\alpha_k}\left(\dot{u}_k(t),-\hat{\xi}_k(t)\right)
 %\end{equation*}
 %is bounded in $L^1(0,T)$.
 In the next lines, we are going to prove that
 \begin{equation}
 \label{crucial-to-observe}
 \begin{aligned}
 &
 \liminf_{k\rightarrow \infty} \int_0^t f_{\alpha_k}\left(\dot{u}_k(r),-{\xi}_k(r)\right)\dd r \\
 & \quad
 \geq \int_0^t \int_{X^*\times \Rr}{ f_{\alpha}\left(\dot{u}_{\mathrm{ac}}(r),-\zeta\right) \dd \sigma_r (\zeta,p)}\dd r + \int_0^t f_{\alpha}^\infty\left(\dot{u}_s(r),0\right) \|(\dd u)_\sing\|(r)\,.
 \end{aligned}
 \end{equation}
 In order to do so,
  employing \cite[Corollary 1.116, p.\  75]{FL07}, we
  decompose the measure $\mu$ from \eqref{falpha-measure-conver} as follows:  there exist
 $\mu_{\mathrm{ac}}$, $\mu_\sing$, $\mu_{\perp}$ in $\Mm(0,T)$ such that
 \begin{equation}
 \label{decomp-mu}
  \begin{array}{c}
  \mu_{\mathrm{ac}} \ll \|(\dd u)_{\mathrm{ac}}\| \,, \ \mu_\sing \ll \|(\dd u)_\sing\|\,,
  \ \mu_{\perp} \perp  \|(\dd u)_{\mathrm{ac}}\| + \|(\dd u)_\sing\| \ \text{and} \\ \mu = \mu_{\mathrm{ac}}+\mu_\sing + \mu_{\perp}
  \end{array}
 \end{equation}
% with $\lambda_{\mathrm{ac}}$ and $\lambda_s$ as in \eqref{decomp-lam}.
In particular,
 $\mu_{\mathrm{ac}}$ is absolutely continuous w.r.t.\ the Lebesgue measure $\Ll$.
  Since $f_{\alpha_k}(\dot{u}_k, -{\xi}_k)\geq 0$
   a.e.\ in $(0,T)$,
   we obtain $\mu_{\perp}\geq 0$.
   We will split the proof of \eqref{crucial-to-observe} in two  \medskip steps.

   \noindent
   \underline{\emph{First step:}}
   Now, it follows
    from \eqref{decomp-u} and \eqref{decomp-mu}
    and the Radon-Nikod\'ym property of $X$
     that the set of the points $t_0 \in (0,T)$ such that
     $\sigma_{t_0}(X^*\times \Rr) = 1$ and
     \begin{equation}
     \label{full-leb-meas}
 \begin{array}{lll}
 \displaystyle
\dot{u}_{\mathrm{ac}}(t_0) &=&  \displaystyle  \lim_{\veps\rightarrow 0} \frac{\displaystyle(\dd u) \left([t_0-\veps,t_0+\veps]\cap[0,T]\right)}{\displaystyle\veps}\,, \\
  \displaystyle\hat{\xi}(t_0) &=& \displaystyle \lim_{\veps\rightarrow 0} \frac{1}{\veps} \int_{t_0-\veps/2}^{t_0+\veps/2}\hat{\xi}(t)\dd t,\  \text{and}\\
 \displaystyle \frac{\dd \mu_{\mathrm{ac}}}{\dd \Ll}(t_0) &=&  \displaystyle   \lim_{\veps\rightarrow 0} \frac{\displaystyle \mu \left([t_0-\veps,t_0+\veps]\cap[0,T]\right)}{\displaystyle \veps} < \infty
 \end{array}
 \end{equation}
 has full Lebesgue measure.
 From now on,
 we shall use the notation
 \begin{equation}
    \label{q-eps-notation}
    Q_\veps(t_0):=
 [t_0-\veps,t_0+\veps]\cap[0,T]
  \qquad
 \text{with $t_0 \in (0,T)$ such that \eqref{full-leb-meas} holds.}
 \end{equation}
 We then prove that
 \begin{equation}
  \frac{\dd \mu_{\mathrm{ac}}}{\dd\Ll}(t_0) \geq \int_{X^* \times \Rr} f_{\alpha}\left(\dot{u}_{\mathrm{ac}}(t_0),-\zeta\right)\dd\sigma_{t_0}(\zeta,p) \label{eq:liminf-ac}
 \end{equation}
 with $t_0 \in (0,T)$ such that \eqref{full-leb-meas} holds.
 For any such $t_0$, it is also possible to choose a vanishing  sequence $(\veps_m)_m$
 such that for all $m\in \Nn$ there holds
 \begin{equation}
  \mu\left(\{t_0-\veps_m,t_0+\veps_m\}\cap[0,T]\right)=(\dd u)\left(\{t_0-\veps_m,t_0+\veps_m\}\cap[0,T]\right) = 0\,. \label{eq:liminf-nobd}
 \end{equation}
 In order to show \eqref{eq:liminf-ac}, we will  use  \eqref{1dual-fitz-penot}, which yields
 \begin{equation}
 \label{representation-penot}
 f_{\alpha}\left(\dot{u}_{\mathrm{ac}}(t_0),-\zeta\right)= \sup \{\scalx{x^*,\dot{u}_{\mathrm{ac}}(t_0) }
 - \scalx{\zeta,x} - \rho_{\alpha^-1}(x^*,x)\, : \   (x,x^*) \in  X \times X^*
  \}\,.
 \end{equation}
% which  derives from  \eqref{fitz-formula} and the observation that the Penot
 %function $\rho_{\alpha^-1}$  represents $\alpha^{-1}$.
 In view of
 \eqref{representation-penot}, we  thus confine ourselves to showing that
 \begin{equation}
 \label{easier}
 \begin{aligned}
  \frac{\dd \mu_{\mathrm{ac}}}{\dd\Ll}(t_0) &  \geq \scalx{x^*,\dot{u}_{\mathrm{ac}}(t_0)} - \int_{X^* \times \Rr}\scalx{\zeta,x} \dd\sigma_{t_0}(\zeta,p) -\rho_{\alpha^{-1}}(x,x^*)
  \\ & \qquad \qquad \qquad \text{ for all $(x,x^*) \in X \times X^*$ with $\rho_{\alpha^{-1}}(x,x^*)<\infty$.}
  \end{aligned}
 \end{equation}
 Now,  to check
 \eqref{easier}
  we observe that,
  since $\alpha_k\Graph \alpha$ also $\alpha_k^{-1}\Graph \alpha^{-1}$ in the graph sense, we can apply Theorem \ref{Attouch2} to $\rho_{\alpha_k^{-1}} = f_{\alpha_k}^*$.
 Therefore, for any $(x,x^*) \in X\times X^*$ there exists a sequence $\left(x_k, x_k^*\right)_{k\in\Nn}$ such that
 \begin{equation}
 \label{plugged-in}
 \text{
 $(x_k,x_k^*) \rightarrow (x,x^*)$ and $\limsup_{n\rightarrow \infty}\rho_{\alpha_k^{-1}}\left(x_k^*,x_k\right) \leq \rho_{\alpha^{-1}}(x^*,x)$.}
 \end{equation}
Combining \eqref{falpha-measure-conver} with the third of \eqref{full-leb-meas} (for the sequence
$(\eps_m)_m$ fulfilling \eqref{eq:liminf-nobd}), we have that
\begin{equation}
\label{to-quote-later}
 \begin{aligned}
    & \lefteqn{\frac{\dd \mu_{\mathrm{ac}}}{\dd\Ll}(t_0)} \\  & = \lim_{m\rightarrow \infty}{\lim_{k\rightarrow \infty}{\veps_m^{-1}\int_{Q_{\veps_m}(t_0)}{f_{\alpha_k}(\dot{u}_k(t),-{\xi}_k(t))\dd t}}} \\
 &\geq  \liminf_{m\rightarrow \infty}{\liminf_{k\rightarrow \infty}{\veps_m^{-1}\int_{Q_{\veps_m}(t_0)}{\left( \scalx{x_k^*,\dot{u}_k(t)} + \scalx{-{\xi}_k(t),x_k} -\rho_{\alpha_k^{-1}}(x_k^*,x_k) \right)\dd t}}},\\
 \end{aligned}
 \end{equation}
 where in the latter inequality we have
 plugged in the sequence $(x_k,x_k^*)$ from \eqref{plugged-in} and applied formula
 \eqref{representation-penot} for $f_{\alpha_k}$. On account of convergences
 \eqref{measure-converg} and \eqref{weak-converg-xi},
 and of the fact that $(x_k,x_k^*) \rightarrow (x,x^*)$, we have
 for every $m \in \N$
 \begin{align}
 \label{converg1}
 &
  \int_{Q_{\veps_m}(t_0)}{\scalx{x_k^*,\dot{u}_k(t)} \dd t } \rightarrow \int_{Q_{\veps_m}(t_0)}{\scalx{x^*,\dd u(t)}}\,.
\\
&
\label{converg2}
\begin{aligned}
  \int_{Q_{\veps_m}(t_0)}\scalx{{\xi}_k(t),x_k}\dd t \rightarrow  &  \int_{Q_{\veps_m}(t_0)}\scalx{\hat{\xi}(t),x}\dd t
   \\ & =  \int_{Q_{\veps_m}(t_0)} \left( \int_{\Xs \times \R}\scalx{\zeta,x} \dd \sigma_{t} (\zeta,p) \right)\dd t
  \,.
  \end{aligned}
 \end{align}
 Inserting \eqref{converg1}--\eqref{converg2} into \eqref{to-quote-later} and
  using \eqref{plugged-in}, we thus get
  \[
 \begin{aligned} &  {\frac{\dd \mu_{\mathrm{ac}}}{\dd\Ll}(t_0)} \\ &  \geq
  \liminf_{m\rightarrow \infty}  \frac1{\veps_m}
  \left( \int_{Q_{\veps_m}(t_0)}{\scalx{x^*,\dd u(t)}} +  \int_{Q_{\veps_m}(t_0)} \left( \int_{\Xs \times \R}\scalx{-\zeta,x} \dd \sigma_{t} (\zeta,p)- \rho_{\alpha^{-1}}(x,x^*) \right)\dd t \right)
  \end{aligned}
  \]
  and in view of \eqref{full-leb-meas} we infer
   \eqref{easier}, whence \smallskip the desired \eqref{eq:liminf-ac}.
 %%%%%%%%%%%%
 %%%%%%%%%%%%%%

 \noindent
 \underline{\emph{Second step:}}
  choose $t_0\in (0,T)$ such that it satisfies
 \begin{equation}
 \label{arrayed-props-2}
 \begin{array}{lll}
   \displaystyle \dot{u}_\sing(t_0) &= & \displaystyle   \lim_{\veps\rightarrow 0} \frac{ \displaystyle(\dd u) \left([t_0-\veps,t_0+\veps]\cap[0,T]\right)}{\displaystyle \|(\dd u)_\sing
   \|\left([t_0-\veps,t_0+\veps]\cap[0,T]\right)}\,,
   \smallskip
   \\
\displaystyle 0 & = & \displaystyle   \lim_{\veps\rightarrow 0}{\frac{\displaystyle \Ll \left([t_0-\veps,t_0+\veps]\cap[0,T]\right)}{\displaystyle \|(\dd u)_\sing\|\left([t_0-\veps,t_0+\veps]\cap[0,T]\right)}},\ \
 \text{and}
 \smallskip
 \\
 \frac{\displaystyle \dd\mu_\sing}{ \displaystyle \|(\dd u)_\sing\| }(t_0) &= & \displaystyle   \lim_{\veps\rightarrow 0}
  \frac{\displaystyle\mu \left([t_0-\veps,t_0+\veps]\cap[0,T]\right)}{\displaystyle\|(\dd u)_\sing\|\left([t_0-\veps,t_0+\veps]\cap[0,T]\right)} < \infty\,.
 \end{array}
 \end{equation}
 The set of all $t_0$ failing any of
  \eqref{arrayed-props-2}
  is a $\|(\dd u)_\sing\|$-null set.
  We are now going to prove that
 \begin{equation}
 \label{second-claim}
  \frac{\dd\mu_\sing}{ \|(\dd u)_\sing\| }(t_0) \geq f_{\alpha}^{\infty}(\dot{u}_\sing(t_0),0)\,.
 \end{equation}
 for any $t_0 \in (0,T)$ complying with
  \eqref{arrayed-props-2}.
   As before, we will use the notation
  \eqref{q-eps-notation} for the set $Q_\eps(t_0)$ with any such $t_0$, and
   we choose  correspondingly a vanishing sequence $\left(\veps_m\right)$ such that \eqref{eq:liminf-nobd} is satisfied.
    %For any $(x,x^*) \in X\times X^*$ there exists a sequence $\left(x_k, x_k^*\right)_{k\in\Nn}$ such that $(x_n,x_n^*) \wconv (x,x^*)$ and $\limsup_{n\rightarrow \infty}\rho_{\alpha_k^{-1}}\left(x_k,x_k^*\right) \leq \rho_{\alpha^{-1}}(x,x^*)$. Once again we assume that $\rho_{\alpha^{-1}}(x,x^*)< \infty$.
   In order to show \eqref{second-claim},
   in view of the representation formula \eqref{repre-recession}  for
   $f_\alpha^\infty$
   it is sufficient to show that
 \begin{equation}
\label{easier-2}
  \frac{\dd\mu_{\sing}}{\|(\dd u)_\sing\| }(t_0) \geq \scalx{x^*,\dot{u}_{\sing}(t_0)} \text{ for all $(x,x^*)$ such that $\rho_{\alpha^{-1}}(x,x^*) < \infty$.}
 \end{equation}
     With the same argument as in the previous lines, we see that
     \begin{equation}
     \label{chain-ineqs}
 \begin{aligned}
 &
 \frac{\dd\mu_{\sing}}{ \|(\dd u)_\sing\| }(t_0) \\  & =  \lim_{m\rightarrow \infty}{\lim_{k\rightarrow\infty} { \frac{1}{\|(\dd u)_\sing\|\left(Q_{\veps_m}(t_0)\right)}\int_{Q_{\veps_m}(t_0)}{f_{\alpha_k}(\dot{u}_k(t), -{\xi}_k(t))\dd t}}} \\
 & \geq \liminf_{m\rightarrow \infty}\liminf_{k\rightarrow\infty}\frac{1}{\|(\dd u)_\sing\|\left(Q_{\veps_m}(t_0)\right)}  \int_{Q_{\veps_m}(t_0)}{\left( \scalx{x_k^*,\dot{u}_k(t)} + \scalx{-{\xi}_k(t),x_k}- \rho_{{\alpha_k}^{-1}}(x_k^*,x_k)
 \right) \dd t}\,,
 \end{aligned}
 \end{equation}
 where $(x_k,x_k^*)$ as in \eqref{plugged-in}  approximates
 $(x,x^*)$ from \eqref{easier-2}.
 Once again,  due to \eqref{measure-converg} and \eqref{weak-converg-xi} we have for every fixed $m \in \N$
  that
 \begin{align*}
  & \int_{Q_{\veps_m}(t_0)}{\scalx{x_k^*,\dot{u}_k(t)}\dd t} \rightarrow \int_{Q_{\veps_m}(t_0)}{\scalx{x^*,\dd u(t)}}\ \text{and}\\
  & \int_{Q_{\veps_m}(t_0)}\scalx{{\xi}_k(t),x_k}\dd t  \rightarrow  \int_{Q_{\veps_m}(t_0)}\scalx{\hat{\xi}(t),x}\dd t\,.
 \end{align*}
 By construction (cf.\ \eqref{arrayed-props-2}), there holds
 \begin{align*}
  & \lim_{m\rightarrow \infty} \frac{1}{\|(\dd u)_\sing\|\left(Q_{\veps_m}(t_0)\right)} \int_{Q_{\veps_m}(t_0)}{\scalx{x^*,\dd u(t)}} = \scalx{x^*, \dot{u}_\sing(t_0)}\,, \\
  & \lim_{m\rightarrow \infty} \frac{1}{\|(\dd u)_\sing\|\left(Q_{\veps_m}(t_0)\right)} \int_{Q_{\veps_m}(t_0)}\scalx{-\hat{\xi}(t),x}\dd t = 0,  \\
  & \lim_{m\rightarrow \infty} \frac{\Ll(Q_{\veps_m}(t_0))}{\|(\dd u)_\sing\|\left(Q_{\veps_m}(t_0)\right)}\rho_{\alpha^{-1}}(x,x^*) = 0\,.
 \end{align*}
 We thus conclude \eqref{easier-2}, whence \eqref{second-claim}.
\medskip

\noindent
In conclusion, passing to the limit as $n_k \to \infty$ in \eqref{fitz-n} and relying on
the initial data convergence \eqref{converg-initia-data}, the energy convergence
\eqref{pointwise-converg} joint with \eqref{Egeqe},
\eqref{converg-pi},
and the lower semicontinuity
\eqref{crucial-to-observe},  we have obtained
 \begin{equation}
   \begin{aligned}
   & \Ec_t(u(t)) +\int_0^t \int_{X^*\times \Rr}{ f_{\alpha}\left(\dot{u}_{\mathrm{ac}}(s),-\zeta\right) \dd \sigma_s (\zeta,p)}\dd s +
    \int_0^t f_{\alpha}^\infty\left(\dot{u}_\sing(s),0\right)\|(\dd u)_\sing\| (s) \\ &
    \leq E(t) +\int_0^t \int_{X^*\times \Rr}{ f_{\alpha}\left(\dot{u}_{\mathrm{ac}}(s),-\zeta\right) \dd \sigma_s (\zeta,p)}\dd s +
    \int_0^t f_{\alpha}^\infty\left(\dot{u}_\sing(s),0\right) \|(\dd u)_\sing\|(s)
    \\ &
      \quad \leq \Ec_0(u(0)) +
     \int_0^t\int_{X^*\times \Rr}p\dd \sigma_s (\zeta,p)\dd s \qquad \text{for all } t \in
     (0,T]\,.
%\\ &  \quad \leq \Ec_0(u(0)) +
  %   \int_0^t \partial_t \ene s{u(s)} \dd s
     \end{aligned} \label{eq:liminf-res1}
 \end{equation}
%where the last inequality ensues from \eqref{p-ineq}.

\noindent
 \textbf{Step 4 - Enhanced support properties of  the Young measure $(\sigma_t)_{t \in (0,T)}$:}
We can now improve the third of \eqref{arrayed-properties},
showing that indeed
 \begin{equation}
 \label{enhanced-supp-prop}
   \mathrm{supp}\left(\sigma_t\right)  \subset  \{(\zeta,p) \in X^* \times \Rr : \zeta \in \diff {t}{u(t)}\,,
   \  -\zeta \in \alpha(u(t))\,, \ p\leq \partial_t \Ec_t(u(t))\} \quad \foraa\, t \in (0,T).
 \end{equation}
% Especially we hence would like to prove that $-\zeta \in \alpha(u(t))$ for almost every $t\in[0,T]$.
To this end, observe that, passing to the limit as $n_k \to \infty$
in \eqref{fitz-n} (written on the interval $(s,t)$), yields, in view
of convergences \eqref{pointwise-converg},
\eqref{falpha-measure-conver}, and \eqref{converg-pi}, the following
energy identity
\begin{subequations}
\label{eq:lim-simp}
 \begin{equation}
  E(t) + \mu([s,t]) =E(s) +  \int_s^t p(r) \dd r \qquad \text{for all } 0 \leq s \leq t \leq T \qquad \text{with }\label{eq:lim-simp-a}
 \end{equation}
\begin{equation}
\label{eq:lim-simp-b} \mu([s,t]) \geq \int_s^t \int_{X^* \times \R}
 f_{\alpha}\left(\dot{u}_{\mathrm{ac}}(r),-\zeta\right) \dd \sigma_r
(\zeta,p)\dd r
\end{equation}
the latter inequality due to \eqref{eq:liminf-ac}.
\end{subequations}
In particular, observe that
\begin{equation}
\label{important}
 \int_0^T \int_{X^* \times \R}
 f_{\alpha}\left(\dot{u}_{\mathrm{ac}}(t),-\zeta\right) \dd \sigma_t
(\zeta,p)\dd t<\infty\,.
\end{equation}
%In the next lines, for
% notational simplicity we will use the place-holder $e(t):= \Ec_t(u(t))$.
  Let $\mathcal{T}\subset[0,T]$ be the set of all
  Lebesgue points
  $t_0$  of $\hat{P}$ \eqref{converg-pi}, such that relations
  \eqref{full-leb-meas} and \eqref{eq:liminf-ac} hold, and
  \[
 \dot{E}_{\mathrm{ac}}(t_0) = \lim_{\veps\rightarrow 0} \frac{E\left(t_0+\frac{\veps}{2}\right)-E\left(t_0-\frac{\veps}{2}\right)}{\veps}\,.
 \]
 Then $\mathcal{T}$ has full measure.
%Since $e(t) = E(t)$ for almost all $t \in (0,T)$ (cf.\ the second of
%\eqref{arrayed-properties}), we may
Let us now
 choose
 a sequence $(\veps_m)$, $\veps_m \downarrow 0$, such that \eqref{eq:liminf-nobd} holds.
  %and $e(t_0\pm\veps_n/2) = E(t_0 \pm \veps_n/2)$ for all $n \in \N$.
   Then, passing to the limit as $\eps_m \downarrow
    0$ in
\eqref{eq:lim-simp} (written for $s=t_0 - \eps_m/2$ and $t= t_0 +
\eps_m/2$),  we obtain
\begin{equation}
    \label{clever1}
\dot{E}_{\mathrm{ac}}({t_0})+ \int_{X^*\times \Rr}
f_{\alpha}(\dot{u}_{\mathrm{ac}}(t_0),-\zeta)\dd \sigma_{t_0}(\zeta,p)
\leq
    \int_{X^*\times \Rr} p\dd \sigma_{t_0}(\zeta,p) \quad \text{for all
    } t_0 \in \mathcal{T}
\end{equation}
(up to removing from $\mathcal{T}$ a set of zero Lebesgue measure).
Now, observe that thanks to the third of \eqref{arrayed-properties} and
\eqref{important}, the Young measure $(\sigma_t)_{t \in (0,T)}$
satisfies the assumptions of the forthcoming Lemma \ref{YMchain}. Therefore,
 in view of the Young measure version of the chain rule
inequality \eqref{eq:chainrule} therein, we find that
    \begin{equation}
    \label{clever2}
 \begin{aligned}
  \!\!\!\!\!\!\!\! -\dot{E}_{\mathrm{ac}}({t_0})+ \int_{X^*\times \Rr} p\dd \sigma_{t_0}(\zeta,p)  \leq
  \int_{X^*\times \Rr} \scalx{-\zeta,\dot{u}_{\mathrm{ac}}({t_0})}\dd
   \sigma_{t_0}(\zeta,p)\quad \text{for almost all } t_0 \in
   \mathcal{T}\,.
 \end{aligned}
 \end{equation}
Combining \eqref{clever1} and
\eqref{clever2},
 we deduce that for almost all $t_0 \in \mathcal{T}$ (and
hence for almost all $t_0\in(0,T)$) it holds
 \begin{equation}
\label{very-clever}
   \int_{X^*\times \Rr} \left( f_{\alpha}(\dot{u}_{\mathrm{ac}}(t_0),-\zeta) - \scalx{-\zeta,\dot{u}_{\mathrm{ac}}(t_0)} \right) \dd \sigma_{t_0}(\zeta,p) \leq
   0\,.
 \end{equation}
Since $f_\alpha$ is a representative function for $\alpha$,  we
easily see that \eqref{very-clever} holds as an equality, and that
in fact
\[
-\zeta \in \alpha(\dot{u}_{\mathrm{ac}}(t_0)) \qquad \text{for
$\sigma_{t_0}$-a.a.\ $(\zeta,p) \in \mathrm{supp}(\sigma_{t_0})$.}
\]
Since $t_0  \in (0,T)$ is arbitrary out of a Lebesgue-null set, we
have ultimately proved the desired support property
\eqref{enhanced-supp-prop}. Furthermore, as a by-product of
\eqref{clever1}--\eqref{very-clever} holding as equalities, we infer the
following  pointwise energy equality
\begin{equation}
\label{not-so-surprising}
 \dot{E}_{\mathrm{ac}}({t})+ \int_{X^*\times \Rr}
f_{\alpha}(u_{\mathrm{ac}}(t),-\zeta)\dd
\sigma_{t}(\zeta,p)=\int_{X^*\times \Rr} p\dd \sigma_{t}(\zeta,p)
\qquad \foraa\, t \in (0,T).
\end{equation}

\noindent
 \textbf{Step 5 - Selection argument and conclusion of the proof:}
 We can now apply  Lemma \ref{lem:meas-select}
  and deduce that there exist measurable functions $\xi:(0,T)\rightarrow X$ and
  $P:(0,T)\rightarrow \Rr$ such that
  \begin{equation}
  \label{select-argument}
  \left(\xi(t),p(t)\right) \in \text{\normalfont{argmin}}\left\{f_{\alpha}(\dot{u}_{\mathrm{ac}}(t),-\zeta) -p: (\zeta,p) \in \mathcal{S}\left(t,u(t),\dot{u}_{\mathrm{ac}}(t)\right)\right\} \ \ \text{for a.a.}\ t\in(0,T),
  \end{equation}
  with
    $\mathcal{S}(t,u(t),\dot{u}_{\mathrm{ac}}(t)) := \{(\zeta,p) \in X^*\times \Rr: \zeta \in \diff {t}{u(t)}, \,  -\zeta \in \alpha(\dot{u}_{\mathrm{ac}}(t)),
    \, p\leq \partial_t \Ec_t(u(t))\}$.
    In particular,
      \begin{equation}
      \label{used-later}
  \xi(t)\in \diff t{u(t)}\,, \qquad  -\xi(t) \in \alpha(\dot{u}_{\mathrm{ac}}(t)), \qquad
    \text{and }
     P(t) \leq \partial_t \ene t{u(t)} \quad \foraa\, t \in (0,T),
  \end{equation}
 We then have the following chain of inequalities for almost all $t \in (0,T)$
 \begin{equation}
 \label{ineqs-then-eqs}
 \begin{aligned}
 -\dot{E}_{\mathrm{ac}}({t})  & = \int_{X^*\times \Rr}\left( f_{\alpha}(\dot{u}_{\mathrm{ac}}(t),-\zeta) - p \right)\dd \sigma_t(\zeta,p)
 \\ &
 \geq  f_{\alpha}(\dot{u}_{\mathrm{ac}}(t),-\xi(t)) - P(t)
 \\ & \geq \scalx{-\xi(t), \dot{u}_{\mathrm{ac}}(t) } - \partial_t \ene t{u(t)}
\geq -\dot{E}_{\mathrm{ac}}({t}),
\end{aligned}
 \end{equation}
 where the first identity follows from
 \eqref{not-so-surprising}, the second inequality from
 \eqref{select-argument}, the third one from the fact that $f_\alpha$ is a representative
 function for $\alpha$ and from
 \eqref{used-later}, and the last one from the chain rule inequality \eqref{eq:chainrule}. Therefore
 we infer that all inequalities in \eqref{ineqs-then-eqs} hold as equalities, which proves
 \eqref{pointiwse-enid}.
 In particular,
we have that for almost all $t \in (0,T)$
\begin{equation}
\label{interesting-byprod}
\begin{aligned}
&
P(t)= \int_{X^*\times \Rr} p \dd \sigma_t(\zeta,p)= \partial_t \ene t{u(t)},
\\ &
f_{\alpha}(\dot{u}_{\mathrm{ac}}(t),-\xi(t))= \int_{X^*\times \Rr} f_{\alpha}(\dot{u}_{\mathrm{ac}}(t),-\zeta)
\dd \sigma_t(\zeta,p)= \scalx{-\xi(t),\dot{u}_{\mathrm{ac}}(t)}\,.
\end{aligned}
\end{equation}
 Combining \eqref{interesting-byprod}
 with \eqref{eq:liminf-res1} we ultimately deduce \eqref{eq-fitzpatrick-BV}.
   Now, from \eqref{eq-fitzpatrick-BV} with \eqref{interesting-eq} we have that $\int_0^T |\scalx{-\xi(t),\dot{u}_\ac(t)}| \dd t <\infty$.
 Since $\alpha $ complies with
 \eqref{coerc-alpha},
 we  then conclude that  $\xi \in L^q (0,T;X^*)$.
% \begin{equation}
%    \begin{aligned}
%   & \Ec_t(u(t)) +\int_0^t { f_{\alpha}\left(\dot{u}_{\mathrm{ac}}(s),-\xi(t)\right) }\dd s +
%   \int_0^t f_{\alpha}^\infty\left(\dot{u}_\sing(s),0\right)\dd \|\lambda_\sing\|  \\  &
%   \quad \leq \Ec_0(u(0)) + \int_0^t P(s) \dd s
%\\ & \quad
%   \leq \Ec_0(u(0)) + \int_0^t \partial_s \Ec_s(u(s)) \dd s \quad \text{for all } t \in
%   (0,T]\,.
%\end{aligned}
% \end{equation}
 This completes the proof.
 \fin
 % \end{proof}
%\thomas{no energy equality, as jumps occur. however i add a new proposition below}
\begin{remark}[The role of the Fitzpatrick function]
\label{rmk:need-fitz}
\upshape
As pointed out in Remark \ref{rmk:towards-ineqs}, the variational reformulation of the doubly nonlinear differential inclusion
\eqref{eq:dne} could be given in terms of \emph{any} representative functional for $\alpha$. The distinguished role of the
Fitzpatrick function $f_\alpha$ is apparent in the passage to the limit argument developed in Step $3$ of the proof of Thm.\ \ref{thm:main-ref}.
Therein (cf.\  \eqref{eq:liminf-ac}--\eqref{converg2}), we exploit the duality formula \eqref{1dual-fitz-penot} for $f_\alpha$, as well as
Theorem \ref{Attouch2}.
\end{remark}
%%%%%%%%
%%%%%%%%%
\begin{remark}[Refinement of the measurable selection argument]
\label{rmk:slight-simpl}
\upshape
A close perusal of Step $5$  in the above proof
reveals that, in principle, it
should be sufficient to select the functions $t \mapsto (\xi(t),p(t))$ in the set
  $\tilde{\mathcal{S}}(t,u(t)) := \{(\zeta,p) \in X^*\times \Rr: \zeta \in \diff {t}{u(t)},
  %\,  -\zeta \in \alpha(\dot{u}_{\mathrm{ac}}(t)),
    \, p\leq \partial_t \Ec_t(u(t))\}$,
    i.e.\ dropping the requirement
    $-\zeta \in \alpha(\dot{u}_{\mathrm{ac}}(t))$.
    Indeed, if we were in the position of applying Lemma \ref{lem:meas-select} to the set $\tilde{\mathcal{S}}$,
    from the chain of inequalities \eqref{ineqs-then-eqs}
    the second of \eqref{interesting-byprod}  would still
    follow, yielding
    $-\xi(t) \in \alpha(\dot{u}_{\mathrm{ac}}(t))$ for almost all $t \in (0,T)$, i.e.\
 \eqref{interesting-eq}.

Nonetheless, the extension of Lemma \ref{lem:meas-select} to the set $\tilde{\mathcal{S}}$ seems to be an
open problem, at the moment, cf.\ the upcoming Remark \ref{rmk:slight-prob}.
\end{remark}

\noindent
We conclude this section with the
\begin{proof}[Proof of Theorem \ref{prop:thomas}.]
     Repeating the calculations from
     Step $1$ of the proof of Thm.\ \ref{thm:main-ref},
      we prove that
      the sequence $(u_n)$ is bounded in $W^{1,p}(0,T;X)$
      and in addition fulfills estimate \eqref{aprio-est1}.
       Therefore, convergence
       \eqref{compactness} holds. We use the arguments from
       the above
       Steps $1$ and $4$ to infer that there exist $(\xi,E) \in L^1(0,T;\Xs) \times \BV ([0,T])$
       complying with \eqref{relation-energies} and \eqref{pointiwse-enid}. Since $u \in W^{1,p}(0,T;X)$,
       Proposition \ref{lemma:enid} applies, and we conclude the proof.
  \end{proof}
     %  Therefore the singular part of the derivative $\dd u$ is zero. Hence it follows from the above theorem that there exists a $\xi \in L^q(0,T;\Xs)$, such that for almost all $t\in [0,T]$ it holds $\xi(t)\in F_t(u(t))$ and
    %\begin{equation}
     %   \Ec_t(u(t)) + \int_0^t{f_{\alpha}(\dot u(s),-\xi(s))} \dd s \leq \Ec_0(u(0)) + \int_0^t {\partial_s \Ec_s(u(s))\dd s}\,.
    %\end{equation}
    %As the converse inequality is always true this concludes the proof.

 \begin{appendix}

 \section{\bf Young measure results}
 \label{s:appendixB}
 \noindent We fix here some  definitions and results on
 parameterized (or Young) measures (see   e.g.\
\cite{Balder84, Balder85, Ball89, Valadier90})
with values in  a reflexive Banach space $\uai$.
 In particular,  in Section \ref{s:proof-thm-1} the upcoming results are applied to the space $\uai=
X^* \times \R$.
%%%%%%%%%%%%%%%%%%%%%%%%%%%%%%%%%%%%%%%%%%%%%%%%
\begin{notation}
\upshape
\label{not:Borel}
In what follows, we will   denote by $\mathscr{L}_{(0,T)}$
the $\sigma$-algebra of the Lebesgue measurable subsets of $(0,T)$ and
 by $\mathscr B(\uai)$ the  Borel $\sigma$-algebra of $\uai$.
We  use the symbol $\otimes$ for product $\sigma$-algebrae. We
recall that a $\mathscr{L}_{(0,T)} \otimes \mathscr
B(\uai)$-measurable function $h : (0,T) \times \uai \to
(-\infty,+\infty]$ is a \emph{normal integrand} if for a.a. $t \in
(0,T)$ the map $y\mapsto h_t(y)= h(t,y)$ is lower semicontinuous on
$\uai$.

We consider the space $\uai$ endowed   with the \emph{weak}
topology, and say that a $\mathscr{L}_{(0,T)} \otimes \mathscr
B(\uai)$--measurable functional $h: (0,T) \times
\uai \to (-\infty,+\infty]$ is a \emph{weakly-normal
integrand}  if for a.a. $t \in (0,T)$ the map
\begin{equation}
\label{def:wws}
\begin{gathered}
 \text{$y\mapsto h(t,y)$
is sequentially lower semicontinuous on $\uai$ w.r.t.\ the
weak topology.}
\end{gathered}
\end{equation}
%%%%%%%%%%%%%%%%%%%
We   denote by $\mathscr{M} (0,T; \uai)$ the set of all
$\mathscr{L}_{(0,T)}$-measurable functions $y: (0,T) \to
\uai$. A sequence $(y_n) \subset\mathscr{M} (0,T;
\uai) $ is said to be \emph{weakly-tight}  if there exists a
weakly-normal integrand $h:(0,T) \times \uai
\rightarrow (-\infty,+\infty]  $ such that the map
\[
\begin{gathered}
\text{$y\mapsto  h_t(y)$ has compact sublevels w.r.t.\
 the weak topology of $\uai$, and }
\sup_n \int_0^T h(t,y_n(t))  \dd t <\infty.
\end{gathered}
\]
\end{notation}
%%%%%%%%%%%%%%%%%%%%
%%%%%%%%%%%%%%%%%%%%
%%%%%%%%%%%%%%%%%%
%%
 \begin{definition}[\bf Young measures with values in $\uai$]
  \label{parametrized_measures}
  A (time-dependent) \emph{Young measure} in the space $\uai$
  is a family
  ${\bm \sigma}:=\{\sigma_t\}_{t \in (0,T)} $ of Borel probability measures
  on $ \uai$ parameterized by $t \in (0,T)$,
  such that the map on $(0,T)$
\begin{equation}
\label{cond:mea} t \mapsto \sigma_{t}(B) \quad \mbox{is}\quad
{\mathscr{L}_{(0,T)}}\mbox{-measurable} \quad \text{for all } B \in
\mathscr{B}(\uai).
\end{equation}
We denote by $\mathscr{Y}(0,T; \uai)$ the set of all Young
measures in $\uai$.
\end{definition}
%%%%%%%%%%%%%
%%%%%%%%%%%
\noindent The following result is taken from \cite{MRS11x} (cf.\ Thms.\ A.2 and A.3 therein).
 It is a
 generalization of
the so-called {\em
  Fundamental Theorem of Young measures}
   (cf.\ the
classical results~\cite[Thm.\,1]{Balder84},
\cite[Thm.\,2.2]{Balder85}, \cite{Ball89},
\cite[Thm.\,16]{Valadier90}), to  the case of Young measures with values in $\uai$ endowed
 with the weak topology (see also
\cite[Thm.\,3.2]{RS06}
for the case in which  $\uai$ is a Hilbert space endowed with the weak topology).
%%%%%%%%%%%%%%%%%%%%
%%%%%%%%%%%%%%%%%%%%%%
 \begin{theorem}[The Fundamental Theorem for weak topologies]
 \label{th:YM1}
  Let
  $\left(y_n\right) \subset \mathscr{M} (0,T; \uai)$
  be
  a weakly-tight sequence. Then,
\begin{enumerate}
   \item
   there exists a subsequence $\left(y_{n_k}\right)$ and a Young measure  ${\bm \sigma} = \left(\sigma_t\right)_{t\in(0,T)} \in \mathscr{Y} (0,T;\uai)$ such that
 \begin{equation} %\nu_t(\Xs) = 1\,, \quad
  \limsup_{k\uparrow \infty}\|y_{n_k}(t)\|_{\uai} < \infty \quad \text{and}
  \quad \supp(\sigma_t) \subset \bigcap_{j=1}^\infty \overline{\left\{y_{n_k}(t):k\geq j\right\}}^{\mathrm{weak}}
  \quad \foraa\, t \in (0,T),
  \label{eq:YM-concentration}
 \end{equation}
 (where $\overline{B}^{\mathrm{weak}}$ denotes the closure of a set $B \subset \uai$ w.r.t.\ the weak topology),
 and
 such that for every weakly-normal integrand $h:[0,T]\times \uai \rightarrow (-\infty,\infty]$ such that $h^-\left(\cdot,y_{n_k}(\cdot)\right)$ is uniformly integrable it holds
 \begin{equation}
  \liminf_{k\rightarrow \infty}\int_0^T{ h\left(t,y_{n_k}(t)\right) \dd t} \geq\int_0^T{ \int_{\uai}{h\left(t,y\right)\dd\sigma_t(y)} \dd t}\,.\label{eq:YM-liminf}
 \end{equation}
\item In particular, let
$(y_n) \subset L^q
(0,T;\uai)$ be a bounded sequence, with
 $q\in (1,+\infty]$.
   Then,
  there exists a further (not relabeled) subsequence $(y_{n_k})$ and
  a Young  measure
  $ {\bm \sigma}=\{ \sigma_{t} \}_{t \in (0,T)}  \in \mathscr{Y}(0,T;\uai)$
  such that for a.a.\ $t\in (0,T)$ properties  \eqref{eq:YM-concentration} hold.
  Setting
  $
\mathrm{y}(t):=\int_{\uai} y \, \dd \sigma_t (y) $ for almost all $t
\in (0,T)$,
there holds
\begin{equation}
  \label{eq:35}
y_{n_k} \weaksto \mathrm{y} \ \ \text{ in $L^p (0,T;\uai)$}.
\end{equation}
\end{enumerate}
 \end{theorem}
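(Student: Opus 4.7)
The plan is to adapt the Balder--Valadier proof of the classical Fundamental Theorem of Young measures (cf.\ \cite{Balder84,Valadier90,Ball89}) to the weak topology of the reflexive space $\uai$, following the strategy of the companion paper \cite{MRS11x}. The main technical difficulty is that the weak topology on $\uai$ is generally non-metrizable, so one must work inside weakly compact subsets of a separable closed subspace, where metrizability is restored via Eberlein--\u{S}mulian.

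For part (1), I would first reduce to a separable setting by passing to the closed linear span of the values $\{y_n(t) : n \in \N,\ t \in (0,T)\}$; on bounded weakly closed subsets of such a space the weak topology is metrizable, so that the classical Polish-space machinery of Young measures applies. The weak tightness assumption furnishes an integrand $h$ whose sublevels are weakly compact, which, by Fatou applied to $t \mapsto h(t,y_{n_k}(t))$, yields $\limsup_k\|y_{n_k}(t)\|_\uai < \infty$ for a.e.\ $t$ and prevents escape of mass to infinity. The extraction of a subsequence $(y_{n_k})$ and of a Young measure ${\bm \sigma} = (\sigma_t)_{t\in(0,T)}$ then follows from a Prokhorov-type compactness theorem for Young measures, and a standard argument shows that the support of $\sigma_t$ is contained in $\bigcap_{j}\overline{\{y_{n_k}(t): k\geq j\}}^{\mathrm{weak}}$ for a.a.\ $t$.

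To prove the liminf inequality \eqref{eq:YM-liminf} for a weakly-normal integrand $h$ with uniformly integrable negative parts along $(y_{n_k})$, I would truncate $h^M := \min\{h,M\}$, apply Fatou in time jointly with sequential weak lower semicontinuity in the $\uai$-variable (evaluated along cluster points guaranteed by \eqref{eq:YM-concentration}), and then let $M \to \infty$ by monotone convergence; uniform integrability of $h^-(\cdot, y_{n_k}(\cdot))$ eliminates any defect at $-\infty$. The use of sequential weak closure in \eqref{eq:YM-concentration} rather than topological weak closure is precisely what makes the argument compatible with the definition of weakly-normal integrand in \eqref{def:wws}.

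Part (2) specializes part (1) with the choice $h(t,y) := \|y\|_\uai^q$, which is weakly-normal since $\|\cdot\|_\uai^q$ is weakly lower semicontinuous with weakly compact sublevels (reflexivity); thus $L^q$-boundedness implies weak tightness. For the barycentric identity \eqref{eq:35}, I would apply \eqref{eq:YM-liminf} to the weakly-normal integrands $g^{\pm}(t,y) := \pm\,\varphi(t)\,\langle \zeta^*, y\rangle$ for arbitrary $\varphi \in L^{q'}(0,T)$ (or $\varphi \in L^1$ if $q=\infty$) and arbitrary $\zeta^* \in \uai^*$; these are uniformly integrable along $(y_{n_k})$ by Hölder, and the double-sided liminf inequality forces an equality, yielding the weak-star convergence of $y_{n_k}$ to the barycenter $\mathrm{y}(t) = \int_\uai y\,\dd \sigma_t(y)$. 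The main obstacle is the metrizability reduction in Step~1, where the interplay between reflexivity, the weak tightness hypothesis, and the sequential character of \eqref{def:wws} and \eqref{eq:YM-concentration} must be handled carefully; once this framework is in place, the remaining steps are refinements of the now classical narrow-convergence arguments.
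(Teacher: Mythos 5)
First, a point of reference: the paper does not prove this statement. Theorem \ref{th:YM1} is quoted from \cite{MRS11x} (Thms.\ A.2 and A.3 therein), as the sentence preceding it makes explicit, so there is no in-paper argument to compare yours against. Your outline does follow the strategy of that reference (Balder--Valadier narrow-convergence arguments transplanted to the weak topology of a reflexive space, with a separable reduction and metrizability of the weak topology on bounded sets), so the general plan is the right one.

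However, two of your steps contain genuine gaps. (i) Fatou does \emph{not} yield $\limsup_k\|y_{n_k}(t)\|_\uai<\infty$ a.e.: from $\sup_n\int_0^T h(t,y_n(t))\,\dd t<\infty$ with $h\geq 0$ coercive you only obtain $\liminf_n h(t,y_n(t))<\infty$ a.e.\ along the \emph{full} sequence, and along the full sequence the $\limsup$ can be $+\infty$ a.e.\ (take $y_n=n\,\mathbf{1}_{E_n}e$ with $\|e\|_\uai=1$, $|E_n|=1/n$ and the $E_n$ sweeping $(0,T)$ cyclically, so that every $t$ lies in infinitely many $E_n$ while $\int_0^T\|y_n\|_\uai\,\dd t\equiv 1$). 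The a.e.\ finiteness of the $\limsup$ is itself a nontrivial output of the subsequence extraction, obtained by a Borel--Cantelli-type selection (choose $n_k$ so that $\sum_k\Ll(\{t:\,h(t,y_{n_k}(t))>c_k\})<\infty$ for suitable $c_k\uparrow\infty$); it cannot be read off from Fatou. (ii) Your derivation of \eqref{eq:YM-liminf} -- Fatou in $t$ combined with sequential weak lower semicontinuity ``evaluated along cluster points'' -- only proves $\liminf_k\int_0^T h(t,y_{n_k}(t))\,\dd t\geq\int_0^T\inf_{y\in\supp(\sigma_t)}h(t,y)\,\dd t$, and since $\inf_{y\in\supp(\sigma_t)}h(t,y)\leq\int_\uai h(t,y)\,\dd\sigma_t(y)$ this is strictly \emph{weaker} than the claimed barycentric lower bound (an integrand oscillating between two values $a,b$ with equal frequency already shows the pointwise Fatou route loses the factor $\tfrac12(h(a)+h(b))$ versus $\min\{h(a),h(b)\}$). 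The actual engine here is the lower-closure (portmanteau) theorem for narrow convergence of Young measures, i.e.\ Balder's lower semicontinuity theorem applied to the truncations $h\wedge M$ after the metrizable reduction; the support property \eqref{eq:YM-concentration} alone cannot produce \eqref{eq:YM-liminf}. The remaining ingredients -- the coercive integrand $\|y\|_\uai^q$ to deduce weak tightness in part (2), and the identification of the barycenter by testing with $\pm\varphi(t)\scalx{\zeta^*,y}$ plus density -- are sound.
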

%%%%%%%%%%%
%%%%%%
 \subsection{A Young-measure version of the chain rule}
 In what follows, we will work with Young measures with values in the space
 $\uai= X^*\times \R$.
Our first result,
  a small variation  of \cite[Prop.\ B.1]{MRS11x}, provides the version of the chain rule inequality
  \eqref{eq:chainrule} in terms of Young measures used in Step $4$ of the proof of Thm.\
  \ref{thm:main-ref}.
  %%%%%%%%%%%%%%
 \begin{lemma}\label{YMchain}
 In the frame of \eqref{X-reflexive},   let $\alpha: X \rightrightarrows \Xs$ fulfill \eqref{gen-max-monot} and the coercivity condition
    \eqref{coerc-alpha}, and let $\cE: [0,T] \times X \to (-\infty,+\infty]$ comply with
    Assumption \emph{\ref{ass:chain-rule}}.
    Let
     $u\in \BV([0,T];X)$ satisfy
    \begin{equation}
    \begin{gathered}
        \sup_{t\in [0,T]} \Ec_t(u(t)) < \infty\,, \ \ \left(t,u(t)\right) \in \dom(\frsubname) \ \text{for a.a.} \ t\in (0,T), \ \ \int_0^T |\partial_t \ene t{u(t)}|\dd t <\infty,
        \\
        \exists E \in \BV ([0,T]) \text{ such that } E(t) = \ene t{u(t)} \ \foraa\, t \in (0,T),
         \label{app:assum-state}
         \end{gathered}
    \end{equation}
    and let $(\sigma_t)_{t\in(0,T)}\in \mathscr{Y}(0,T;X^* \times \R)$
    be a  Young measure such that
    \begin{align}
        & \forall (\xi,p) \in \supp(\sigma_t): \xi \in \diff {t}{u(t)}\,, \ p \leq \partial_t \Ec_t(u(t)) \text{ for a.a.} \ t\in (0,T)
        \label{s:app-conce}
         \\
        & \int_0^T \int _{X^* \times \Rr}{f_{\alpha}(\dot{u}_{\mathrm{ac}}(s),-\zeta)\dd \sigma(\zeta,p)}ds < \infty
        \label{s:app-falpha}
    \end{align}
    Then, for almost all $t\in (0,T)$ such
    that $t$ is Lebesgue point of $\dot{E}_{\mathrm{ac}}$
    and $\dot{u}_{\mathrm{ac}}$  there holds
    \begin{equation}
        \dot{E}_{\mathrm{ac}}(t) \geq \int_{X^*\times \Rr}\left(\scalx{\zeta,\dot{u}_{\mathrm{ac}}(t)} + p \right)\dd \sigma_t(\zeta,p)\,. \label{eq:YMchain}
    \end{equation}
 \end{lemma}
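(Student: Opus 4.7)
The plan is to reduce the assertion \eqref{eq:YMchain} to a pointwise application, for $\sigma_{t_0}$-a.e.\ point of its support, of the scalar chain-rule inequality \eqref{eq:chainrule}, and then to integrate against the probability measure $\sigma_{t_0}$.

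First, I would produce some $L^1$-selection $\xi:(0,T)\to X^*$ with $\xi(t)\in \frsub{t}{u(t)}$ a.e.\ and $\int_0^T f_\alpha(\dot{u}_{\mathrm{ac}}(t), -\xi(t))\dd t < \infty$, so that Assumption \ref{ass:chain-rule} may be invoked. A natural candidate is the barycentric selection $\xi(t) := \int_{X^*\times \Rr} \zeta\,\dd\sigma_t(\zeta,p)$, which belongs to the Fr\'echet subdifferential a.e.\ by \eqref{s:app-conce} and the convexity of $\frsub{t}{u(t)}$, and whose Fitzpatrick integral is finite by Jensen's inequality (using convexity of $f_\alpha$ in the second slot) combined with \eqref{s:app-falpha}; the $L^1$-norm bound follows from a standard truncation argument exploiting the coercivity of $\alpha$ from \eqref{coerc-alpha}, the Fitzpatrick bound $f_\alpha \geq \pi$, and the energetic bounds of \eqref{app:assum-state} and \eqref{cond:E2}. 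Assumption \ref{ass:chain-rule} then supplies a full-measure set of times $t_0$ at which \eqref{eq:chainrule} holds for \emph{every} element of $\frsub{t_0}{u(t_0)}$.

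Next, fix $t_0$ in the intersection of this full-measure set with: the set of Lebesgue points of $\dot{u}_{\mathrm{ac}}$ and $\dot{E}_{\mathrm{ac}}$; the set where the support property \eqref{s:app-conce} is satisfied by $\sigma_{t_0}$; and the set where $\int_{X^*\times \Rr} f_\alpha(\dot{u}_{\mathrm{ac}}(t_0),-\zeta)\,\dd\sigma_{t_0}(\zeta,p) < \infty$ (which is a.a.\ $t_0$ by \eqref{s:app-falpha} and Fubini). For such $t_0$, the support condition gives, for $\sigma_{t_0}$-a.e.\ $(\zeta,p)$, both $\zeta \in \frsub{t_0}{u(t_0)}$ and $p \leq \partial_t \Ec_{t_0}(u(t_0))$; plugging $\zeta$ into \eqref{eq:chainrule} and using the upper bound on $p$ produces the pointwise estimate
\begin{equation*}
\dot{E}_{\mathrm{ac}}(t_0) \;\geq\; \scalx{\zeta, \dot{u}_{\mathrm{ac}}(t_0)} + \partial_t \Ec_{t_0}(u(t_0)) \;\geq\; \scalx{\zeta, \dot{u}_{\mathrm{ac}}(t_0)} + p \qquad \text{for $\sigma_{t_0}$-a.a.\ $(\zeta,p)$.}
\end{equation*}

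Finally, integrating this inequality against the probability measure $\sigma_{t_0}$ yields \eqref{eq:YMchain}. The pointwise domination by the constant $\dot{E}_{\mathrm{ac}}(t_0)$ automatically makes the positive part of $(\zeta,p)\mapsto \scalx{\zeta,\dot{u}_{\mathrm{ac}}(t_0)} + p$ $\sigma_{t_0}$-integrable, so the right-hand side of \eqref{eq:YMchain} is well-defined in $[-\infty,\dot{E}_{\mathrm{ac}}(t_0)]$; moreover, the Fitzpatrick lower bound $f_\alpha(\dot{u}_{\mathrm{ac}}(t_0),-\zeta) \geq -\scalx{\zeta,\dot{u}_{\mathrm{ac}}(t_0)}$ together with finiteness of $\int f_\alpha\,\dd\sigma_{t_0}$, and the upper bound $p \leq \partial_t \Ec_{t_0}(u(t_0))$ together with \eqref{cond:E2} and \eqref{app:assum-state}, in fact force the integral to be finite. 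The main (modest) obstacle is the construction of the $L^1$-selection $\xi$ that triggers Assumption \ref{ass:chain-rule}; once this is in place, the content of the lemma is essentially the scalar chain rule applied inside the support of $\sigma_{t_0}$ and then averaged.
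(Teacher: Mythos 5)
Your overall strategy --- produce one admissible selection so as to trigger Assumption \ref{ass:chain-rule}, then transfer the resulting scalar inequality to $\sigma_{t_0}$-a.e.\ point of the support and integrate --- matches the paper's, but the technical device is different and this is where the proposal has a genuine gap. The paper does not use the barycenter: it builds a \emph{countable} family of measurable selections $(\xi_n,p_n)$ whose values are dense in $K(t,u(t))=\{(\zeta,p):\zeta\in\diff t{u(t)},\ p\le\partial_t\Ec_t(u(t))\}$ (a Castaing representation), arranges $\xi_n\in L^1(0,T;\Xs)$ and $\sup_n\int_0^T f_\alpha(\dot{u}_{\ac},-\xi_n)\dd t<\infty$ by nearly minimizing $f_\alpha(\dot{u}_{\ac}(t),-\cdot)$ within the family, applies \eqref{eq:chainrule} to each pair $(u,\xi_n)$ separately, intersects the countably many full-measure sets $\mathcal{T}_n$, and only then extends the inequality to the closed convex hull of $K(t,u(t))$ before integrating against $\sigma_t$. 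This roundabout route is precisely designed to avoid relying on the quantifier ``for all $\xi(t_0)\in\diff{t_0}{u(t_0)}$'' in \eqref{eq:chainrule} holding on a single, selection-independent null set, which your argument takes at face value. (Note that if your barycentric selection were admissible you would not need that quantifier at all: by linearity of the duality pairing, $\scalx{\xi(t_0),\dot{u}_{\ac}(t_0)}=\int\scalx{\zeta,\dot{u}_{\ac}(t_0)}\dd\sigma_{t_0}(\zeta,p)$ and $\partial_t\Ec_{t_0}(u(t_0))\ge\int p\,\dd\sigma_{t_0}(\zeta,p)$ already give \eqref{eq:YMchain} directly.)

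The genuine gap is the admissibility of the barycenter $\xi(t)=\int_{\Xs\times\Rr}\zeta\,\dd\sigma_t(\zeta,p)$. First, for $\xi(t)$ to be a well-defined Bochner integral and for $\xi\in L^1(0,T;\Xs)$ you need $\int_0^T\int\|\zeta\|_*\dd\sigma_t\dd t<\infty$, and this does \emph{not} follow from \eqref{s:app-falpha} together with \eqref{coerc-alpha} as you claim: the coercivity \eqref{coerc-alpha} constrains only pairs $(x,y)$ \emph{in the graph} of $\alpha$, whereas the pairs $(\dot{u}_{\ac}(t),-\zeta)$ with $\zeta\in\supp(\sigma_t)$ need not belong to $\alpha$; for such pairs one only has $f_\alpha\ge\pi$ and $f_\alpha\ge 0$, neither of which controls $\|\zeta\|_*$. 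Second, convexity of $\diff{t}{u(t)}$ alone does not place the barycenter inside it --- one also needs (weak) closedness, which does not follow from the definition \eqref{frsub-def} and is not among the hypotheses of the lemma (it would follow from \eqref{cond:E5}, but Assumption \ref{ener} is not assumed here). The paper sidesteps both problems because its selections take values \emph{in} $K(t,u(t))$ by construction, and their integrability is obtained by truncation within the countable family following \cite[Prop.\ B.1]{MRS11x}. A minor further slip: the upper bound $p\le\partial_t\Ec_t(u(t))$ gives no \emph{lower} bound on $\int p\,\dd\sigma_{t_0}$, so your claim that the right-hand side of \eqref{eq:YMchain} is finite is unjustified --- harmless, since the inequality remains valid if that side equals $-\infty$.
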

 \begin{proof}
     We consider the set $K(t,u(t)) := \{(\xi,p) \in X^*\times \Rr\, : \  \xi\in \diff t{u(t)}, \, p \leq \partial_t \Ec_t(u(t))\} $.
      Repeating the very same arguments as in the proof of \cite[Prop.\ B.1]{MRS11x},
      we can show that
     there exists a sequence $(\xi_n,p_n)$ of strongly measurable functions
      $(\xi_n,p_n): (0,T) \to X^* \times \R$
      such that
    \begin{equation}
        \left\{(\xi_n(t),p_n(t))\, : \  n \in \N\right\} \subset K(t,u(t)) \subset \overline{\left\{(\xi_n(t),p_n(t))\, : \  n \in \N\ \right\} }\quad \foraa\, t \in (0,T) \label{eq:dense}
    \end{equation}
    (where $\overline{B}$ denotes the closure of $B \subset \Xs\times \R$
w.r.t. the strong topology of $\Xs\times \R$).

  We now claim
  that
    the sequence $(\xi_n,p_n)$ can be chosen such that
    \begin{equation}
     \forall n\in \Nn: \xi_n \in L^1(0,T;X^*) \ \ \text{and} \ \sup_{n\in\Nn} \int_0^T{f_{\alpha}(\dot{u}_\mathrm{ac}(t),-\xi_n(t))\dd t} < \infty\,. \label{app:boundedness}
    \end{equation}
    To this aim, we define the function
    $
     g(t):= \inf\{f_{\alpha}(\dot{u}_{\mathrm{ac}}(t),-\zeta): (\zeta,p)\in K(t,u(t))\} $  for almost all  $t\in(0,T)$.
    Notice that due to \eqref{eq:dense} it holds
    \begin{equation}
    \label{g-measurability}
     g(t) := \inf_{n\in\Nn} \{f_{\alpha}(\dot{u}_{\mathrm{ac}}(t),-\xi_n(t))\} \ \text{for a.a.} \ t\in(0,T)
    \end{equation}
    and hence $g$ is measurable on $(0,T)$. Moreover,
    \begin{equation}
    \label{f-alpha-bound}
     \int_0^T g(t)\dd t \leq \int_0^T \int_{X\times \Rr} f_{\alpha}(\dot{u}_{\mathrm{ac}}(t),-\zeta)\dd \sigma_t(\zeta,p) \dd t < \infty.
    \end{equation}
     With a straightforward adaptation of
     the argument of \cite[Prop.\ B.1]{MRS11x} (see also \cite[Lemma 3.4]{RS06}),
      from \eqref{g-measurability} and
      \eqref{f-alpha-bound}
      we deduce
       \eqref{app:boundedness}.

        In view of the  obtained \eqref{eq:dense} and \eqref{app:boundedness},
         we are
        in the position to apply the chain rule inequality \eqref{eq:chainrule}
        to the pair $(u,\xi_n)$ for every $n \in \N$.
        Therefore for every $n \in \N$ there exists a set
         $\mathcal{T}_n \subset (0,T)$ of full measure such that
    $
     \dot{E}_\mathrm{ac}(t)\geq \scalx{\xi_n(t),\dot{u}_{\mathrm{ac}}(t)} + p_n(t) $
     for all $t \in \mathcal{T}_n$, where we have also used that
     $p_n(t) \leq \partial_t \ene t{u(t)}$.
    The set $\mathcal{T} = \bigcap_{n\in\Nn} \mathcal{T}_n$,
 has still full measure, and there holds for all $t \in \mathcal{T}$
    \begin{equation}
    \label{convex-hull}
     \dot{E}_{\mathrm{ac}}(t)\geq \scalx{\zeta,\dot{u}_{\mathrm{ac}}(t)} + p \quad \text{for all }
     (\zeta, p) \in \overline{\text{conv}\ K(t,u(t))},
    \end{equation}
    the latter set denoting the closed convex hull of $ K(t,u(t))$. Integrating
    \eqref{convex-hull} w.r.t.\ the measure $\sigma_t$
   we obtain \eqref{eq:YMchain}.
 \end{proof}
 %%%%%%%%%%%%%%
 %%%%%%%%%%%%%%%%%%%
We conclude with the measurable selection
result exploited in Step $5$ of the proof of Thm.\ \ref{thm:main-ref}.
 \begin{lemma}\label{lem:meas-select}
 In the framework of \eqref{X-reflexive},
 let $\alpha: X \rightrightarrows \Xs$ fulfill \eqref{gen-max-monot} and the coercivity condition
    \eqref{coerc-alpha}, and let $\cE: [0,T] \times X \to (-\infty,+\infty]$ comply with
    Assumptions \emph{\ref{ener}} and   \emph{\ref{ass:chain-rule}}.
 Furthermore, let $u\in \BV([0,T];X)$  fulfill  \eqref{app:assum-state}. Suppose that
 for almost all $t\in(0,T)$
 \begin{equation}
 \label{def-subsetS}
  \mathcal{S}(t,u(t),\dot{u}_{\mathrm{ac}}(t)) := \left\{(\zeta,p) \in X^*\times \Rr: \zeta \in \diff {t}{u(t)}\,,\ -\zeta \in \alpha(\dot{u}_{\mathrm{ac}}(t))\,,
   p\leq \partial_t \Ec_t(u(t))\right\} \neq \emptyset\,.
 \end{equation}
Then, there exist measurable functions $\xi:(0,T)\rightarrow X^*$ and $P:(0,T)\rightarrow \Rr$ such that
 \begin{equation}
  \left(\xi(t),P(t)\right) \in \text{\normalfont{argmin}}\left\{f_{\alpha}(\dot{u}_{\mathrm{ac}}(t),-\zeta) -p: (\zeta,p) \in \mathcal{S}(t,u(t),\dot{u}_{\mathrm{ac}}(t))\right\} \ \ \text{for a.a.}\ t\in(0,T)\,. \label{app:sel-prop}
 \end{equation}
 \end{lemma}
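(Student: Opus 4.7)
The plan is to reduce the claim to a classical measurable selection theorem by verifying that the argmin set-valued map has measurable graph and nonempty closed values. First, I would show that the multifunction $t \rightrightarrows \mathcal{S}(t,u(t),\dot{u}_{\mathrm{ac}}(t))$ has $\mathscr{L}_{(0,T)} \otimes \mathscr{B}(X^* \times \R)$-measurable graph: since $u \in \BV([0,T];X)$ and $X$ has the Radon-Nikod\'ym property, both $t \mapsto u(t)$ and $t \mapsto \dot{u}_{\mathrm{ac}}(t)$ are strongly measurable; combined with the Borel measurability of $\mathrm{graph}(\partial \Ec)$ in \eqref{cond:E0}, the closedness of $\mathrm{graph}(\alpha)$ from maximal monotonicity, and the measurability of $t \mapsto \partial_t \Ec_t(u(t))$ guaranteed by Assumption \ref{ener}, the desired measurability follows by pulling back the relevant Borel sets. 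Each fibre $\mathcal{S}(t)$ is nonempty by hypothesis, and is convex and weakly closed in $X^* \times \R$: the Fr\'echet subdifferential $\diff t{u(t)}$ is convex and norm-closed (hence weakly closed by Mazur), $\alpha(\dot{u}_{\mathrm{ac}}(t))$ inherits the same properties from maximal monotonicity, and $p \leq \partial_t \Ec_t(u(t))$ cuts out a closed half-line.

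Next, I would prove attainment of the infimum on each fibre. The decisive observation is that on $\mathcal{S}(t)$ one has $-\zeta \in \alpha(\dot{u}_{\mathrm{ac}}(t))$, so the representation property \eqref{alpha-associated-with} forces
\[
f_{\alpha}(\dot{u}_{\mathrm{ac}}(t),-\zeta) = \scalx{-\zeta,\dot{u}_{\mathrm{ac}}(t)}\,,
\]
and the objective reduces to the affine functional $(\zeta,p) \mapsto \scalx{-\zeta,\dot{u}_{\mathrm{ac}}(t)} - p$. The optimal $p$ is trivially $\partial_t \Ec_t(u(t))$. For the $\zeta$-component, the coercivity \eqref{coerc-alpha} yields $c_2 \|\zeta\|_*^q \leq \scalx{-\zeta,\dot{u}_{\mathrm{ac}}(t)} + c_3$, so minimizing sequences are norm-bounded in $X^*$; by reflexivity of $X$, weak closedness of the constraint set, and weak continuity of the linear objective, a minimizer exists, and the argmin is a nonempty closed convex subset of $X^* \times \R$.

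Finally, by the first step and the lower semicontinuity of $f_\alpha$, the function $(t,\zeta,p) \mapsto f_{\alpha}(\dot{u}_{\mathrm{ac}}(t),-\zeta) - p + I_{\mathcal{S}(t)}(\zeta,p)$ is a normal integrand in the sense of Notation \ref{not:Borel}, so standard results from the theory of normal integrands ensure that the argmin multifunction is itself measurable with nonempty closed values, and Aumann's measurable selection theorem produces the required $\xi, P$ fulfilling \eqref{app:sel-prop}. The main obstacle lies in the measurability step: verifying that the intersection $\diff t{u(t)} \cap (-\alpha(\dot{u}_{\mathrm{ac}}(t)))$ depends measurably on $t$ critically uses the Borel structure of $\mathrm{graph}(\partial \Ec)$ from \eqref{cond:E0}, and is precisely what prevents the analogous statement for the larger set $\tilde{\mathcal{S}}$ from being obvious, as flagged in Remark \ref{rmk:slight-simpl}.
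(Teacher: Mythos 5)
Your proposal is correct and follows essentially the same route as the paper: the attainment of the infimum on each fibre is proved exactly as in the text (the identity $f_{\alpha}(\dot{u}_{\mathrm{ac}}(t),-\zeta)=\scalx{-\zeta,\dot{u}_{\mathrm{ac}}(t)}$ on $\mathcal{S}$, the coercivity \eqref{coerc-alpha} to bound infimizing sequences, reflexivity plus weak closedness and weak lower semicontinuity to pass to the limit), and the selection step is the same appeal to Castaing--Valadier-type theorems that the paper delegates to \cite[Lemma B.2]{MRS11x}. The only cosmetic difference is that you justify weak closedness of $\diff t{u(t)}$ via convexity and Mazur, whereas the paper invokes the closedness condition \eqref{cond:E5}; both are valid here.
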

\begin{proof}
The argument follows  the very same  lines of  \cite[Lemma B.2]{MRS11x}.
First of all, we
observe that
\begin{equation}
\label{non-empty-argmin}
\text{\normalfont{argmin}}\left\{f_{\alpha}(\dot{u}_{\mathrm{ac}}(t),-\zeta) -p: (\zeta,p) \in \mathcal{S}(t,u(t),\dot{u}_{\mathrm{ac}}(t))\right\}\neq \emptyset \ \ \text{for a.a.}\ t\in(0,T)\,.
\end{equation}
To this aim, let $(\zeta_n,p_n) \subset \mathcal{S}(t,u(t),\dot{u}_{\mathrm{ac}}(t))$ be an infimizing sequence:
then there exist constants $C, \, C' >0$ such that for every $n \in \N$
\begin{equation}
\label{hence-coercivity}
C \geq
f_{\alpha}(\dot{u}_{\mathrm{ac}}(t),-\zeta_n)-p_n= \scalx{-\zeta_n,\dot{u}_{\mathrm{ac}}(t)}
-p_n \geq  c_1\|\dot{u}_{\mathrm{ac}}(t)\|^p+ c_2\|\zeta_n\|_*^q - c_3 - C'
 \end{equation}
 where  we have used that $-\zeta_n \in \dot{u}_{\mathrm{ac}}(t)$, the coercivity property
 \eqref{coerc-alpha} of $\alpha$, and  that $p_n \leq \partial_t \Ec_t(u(t)) \leq C$
 due to the fact that $\sup_{t \in [0,T]} \ene t{u(t)}<\infty$ and to \eqref{cond:E2}.
Therefore, we infer that
$
\sup_{n \in \N} ( \|\zeta_n\|_*^q + |p_n|) <\infty.
$
Hence, there exist $(\zeta,p) \in \Xs \times \R$
such that, up to a not relabeled subsequence,
$\zeta_n\weakto \zeta$ in $X^*$ and $p_n \to p$. Thanks to the closedness condition \eqref{cond:E5}
and to the weak closedness of $\alpha(\dot{u}_{\mathrm{ac}}(t))$,
we have $(\zeta,p)  \in \mathcal{S}(t,u(t),\dot{u}_{\mathrm{ac}}(t)) $.
Using that $\zeta \mapsto f_\alpha (\dot{u}_{\mathrm{ac}}(t),-\zeta)$
is (sequentially) weakly-lower semicontinuous, we conclude that
\[
\liminf_{n \to \infty} \left( f_{\alpha}(\dot{u}_{\mathrm{ac}}(t),-\zeta_n) -p_n \right)
\geq f_{\alpha}(\dot{u}_{\mathrm{ac}}(t),-\zeta) -p
\]
and \eqref{non-empty-argmin} ensues.

Once obtained \eqref{non-empty-argmin}, the argument for \eqref{app:sel-prop} is a straightforward
adaptation of the proof of \cite[Lemma B.2]{MRS11x}, to which we refer for
all details. Let us only mention here that the existence of
  $(\xi,P)$ as in \eqref{app:sel-prop}  is a consequence of the  measurable selection results  \cite[Cor.\ III.3, Thm.\ III.6]{Castaing-Valadier77}.
\end{proof}

\begin{remark}
\label{rmk:slight-prob}
\upshape
Let us stress that the requirement
$\zeta \in \alpha(\dot{u}_{\mathrm{ac}}(t))$  in the definition \eqref{def-subsetS} of the set
$\mathcal{S}(t,u(t),\dot{u}_{\mathrm{ac}}(t))$ has a crucial role in proving that
$$
\text{\normalfont{argmin}}\{f_{\alpha}(\dot{u}_{\mathrm{ac}}(t),-\zeta) -p: (\zeta,p) \in S(t,u(t),\dot{u}_{\mathrm{ac}}(t))\}$$ is nonempty. In fact, it ensures the estimates in
\eqref{hence-coercivity} for any infimizing sequence
$(\zeta_n,p_n)$.
\end{remark}
 \end{appendix}

\bibliographystyle{plain}

\def\cprime{$'$} \def\cprime{$'$}

\end{document}